\newtheorem{theorem}{Theorem}
\newtheorem{lemma}{Lemma}
\newtheorem{claim}{Claim}
\newtheorem{conjecture}{Conjecture}
\setlist[1]{itemsep=1em}
\title{{\bf The optimal $\chi$-bound for $(P_7,C_4,C_5)$-free graphs}}
\author{
Shenwei Huang\thanks{College of Computer Science, Nankai University, Tianjin 300350, China. Email: shenweihuang@nankai.edu.cn. Supported by the National Natural Science Foundation of China (12171256).}
}
\date{December 10, 2022}
\begin{document}

\maketitle

\begin{abstract} 
In this paper, we give an optimal $\chi$-binding function for the class of $(P_7,C_4,C_5)$-free graphs.
We show that every $(P_7,C_4,C_5)$-free graph $G$ has $\chi(G)\le \lceil \frac{11}{9}\omega(G) \rceil$.
To prove the result, we use a decomposition theorem obtained in 
[K. Cameron and S. Huang and I. Penev and V. Sivaraman, 
The class of  $({P}_7,{C}_4,{C}_5)$-free graphs: Decomposition, algorithms, and $\chi$-boundedness, Journal of Graph Theory 93, 503--552, 2020]
combined with careful inductive arguments and a nontrivial use of the K\"{o}nig theorem for bipartite matching.
\end{abstract}

\section{Introduction}

All graphs in this paper are finite and simple. For general graph theory notation we follow~\cite{BM08}. A \emph{$q$-coloring} of a graph $G$ assigns a color from $\{1,\ldots,q\}$ to each vertex of $G$ such that adjacent vertices are assigned different colors. We say that a graph $G$ is \emph{$q$-colorable} if $G$ admits a $q$-coloring. 
The \emph{chromatic number} of a graph $G$, denoted by $\chi(G)$, is the minimum number $q$ for which $G$ is $q$-colorable.
The {\em clique number} of $G$, denoted by $\omega(G)$, is the size of a largest clique in $G$.  
A graph $G$ is \emph{perfect} if $\chi(H)=\omega(H)$ for each induced subgraph $H$ of $G$

Let $P_n$, $C_n$ and $K_n$ denote the path, cycle and complete graph on $n$ vertices, respectively.
For a set $\mathcal{H}$ of graphs, we say that $G$ is {\em $\mathcal{H}$}-free if $G$ contains no induced subgraph isomorphic to $H\in \mathcal{H}$. If $\mathcal{H}=\{H_1,\ldots,H_p\}$, we simply write $G$ is $(H_1,\ldots,H_p)$-free instead.
A graph class is {\em hereditary} if it is $\mathcal{H}$-free for some set $\mathcal{H}$ of graphs.
A hereditary graph class $\mathcal{G}$ is \emph{$\chi$-bounded} if there is a function $f$ such that $\chi(G)\leq f(\omega(G))$ for every $G\in\mathcal{G}$.  The function $f$ is called a \emph{$\chi$-binding function} for $\mathcal{G}$. 
If $f$ can be chosen to be a polynomial function, then $\mathcal{G}$ is called {\em polynomially $\chi$-bounded}.
It is easy to see that a necessary condition for the class of $H$-free graphs to be $\chi$-bounded is that $H$ is a forest.
Gy{\'a}rf{\'a}s conjectured that this condition is also sufficient.

\begin{conjecture}[Gy{\'a}rf{\'a}s \cite{Gy73}]
For every forest $T$, the class of $T$-free graphs is $\chi$-bounded.
\end{conjecture}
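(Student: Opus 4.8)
This is the Gyárfás conjecture, which to date remains open in full generality; what follows is the line of attack that underlies the known partial results and that I would pursue, together with an honest account of where it stalls.

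First I would reduce to the case that $T$ is a tree. Every forest $F$ occurs as an induced subgraph of some tree: take a new vertex $r$ and join it to exactly one vertex of each connected component of $F$. The resulting graph is connected and acyclic, hence a tree $T$, and deleting $r$ recovers $F$ as an induced subgraph. Now if $F$ is an induced subgraph of $T$, then any induced copy of $T$ contains an induced copy of $F$, so a $T$-free graph is automatically $F$-free; equivalently, the class of $F$-free graphs is contained in the class of $T$-free graphs. Since $\chi$-boundedness is inherited by subclasses (the same binding function works), it suffices to establish the conclusion for trees.

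For trees I would argue by induction on structure, guided by the two cases that are actually understood. When $T=P_k$ is a path, the key observation is that a shortest path is induced, so a connected $P_k$-free graph has diameter at most $k-2$; a breadth-first search from any vertex then yields at most $k-1$ layers, each of which again induces a $P_k$-free graph, and bounding $\chi$ layer by layer gives a bound of the form $(k-1)^{\omega-1}$, as Gyárfás showed. When $T$ has radius at most $2$ with center $c$, the forest $T-c$ is a disjoint union of stars, and the decisive point is that in a $T$-free graph the neighborhood of every vertex must avoid these stars in a controlled fashion; this is exactly the Kierstead--Penrice theorem. The general plan abstracts these: fix a BFS-type layering of a $T$-free graph, show that a layered configuration that is deep enough or wide enough forces an induced copy of $T$, and then control $\chi$ within and across a bounded number of layers by an inductive hypothesis applied to the strictly smaller trees that must be absent from neighborhoods.

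The hard part—and the reason the conjecture is still open—is that this induction does not close for trees of large radius. Deleting a leaf or an internal vertex of $T$ leaves a smaller forest whose freeness one must enforce locally, inside neighborhoods or common neighborhoods of layered vertices, but there is no monotone parameter (radius, depth, or the size of the local obstruction) that strictly decreases while keeping the local requirement a bounded, well-understood tree-freeness condition: beyond radius $2$ the neighborhoods carry a tree-freeness constraint of essentially the same difficulty as the original. Producing a layering argument whose $\chi$-bound survives unbounded radius—equivalently, finding the right structural or probabilistic control on how an induced copy of $T$ is forced by long or branching layered configurations—is the genuine obstacle, and no method to date overcomes it for general trees.
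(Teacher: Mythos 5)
You were asked to ``prove'' a statement that the paper itself presents only as a conjecture: it is Gy\'arf\'as's conjecture from 1973, cited but nowhere proved in the paper. The paper merely recalls the known special case for paths (every $P_t$-free graph $G$ has $\chi(G)\le (t-1)^{\omega(G)-1}$) and then proves its own, much narrower result about $(P_7,C_4,C_5)$-free graphs. So there is no proof in the paper to compare yours against, and your honest refusal to claim a proof is the correct response; any proposal that purported to close the argument would necessarily contain an error.

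On the content you do offer: the reduction from forests to trees is correct and cleanly argued (attaching a new root to one vertex of each component embeds the forest $F$ as an induced subgraph of a tree $T$, so $F$-free graphs form a subclass of $T$-free graphs, and $\chi$-boundedness passes to subclasses). Your account of the two known regimes (paths, and radius-two trees via Kierstead--Penrice) is accurate as a survey, but your sketch of the path case is looser than Gy\'arf\'as's actual proof: the BFS layers of a connected $P_k$-free graph are again $P_k$-free with no drop in clique number, so ``bounding $\chi$ layer by layer'' does not close an induction by itself. Gy\'arf\'as's argument instead descends into \emph{neighborhoods} of the vertices of a suitably chosen dominating induced path (at most $k-1$ vertices long); since a neighborhood has clique number at most $\omega-1$, the induction is on $\omega$, yielding the bound $(k-1)^{\omega-1}$. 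Your closing diagnosis --- that beyond radius two the local constraint inside neighborhoods is a tree-freeness condition as hard as the original, so no monotone parameter decreases --- is a fair description of why the conjecture remains open, and it is consistent with the paper, which sidesteps the general problem entirely by restricting to a class with a known structural decomposition.
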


Gy{\'a}rf{\'a}s \cite{Gy87} proved the conjecture for $T=P_t$: every $P_t$-free graph
$G$ has $\chi(G)\le (t-1)^{\omega(G)-1}$.  
Note that this $\chi$-binding function is exponential in $\omega(G)$. 
Esperet asked whether every $\chi$-bounded hereditary class is polynomially $\chi$-bounded. 
Recently, this question was answered negatively by Bria\'{n}ski, Davies and Walczak \cite{BDW22}.
However, the question is still wide open for $P_t$-free graphs.

The answer is trivial for $t\le 4$ (since $P_4$-free graphs are perfect).
For $t= 5$, there is a recent result by Scott, Seymour and Spirkl \cite{SSS22} that  gives a quasi-polynomial bound. 
Apart from these, not much is known. Therefore, researchers started to investigate subclasses of $P_t$-free graphs.
One interesting line is to consider $P_t$-free graphs without certain induced cycles. 
For instance, any $(P_5,C_3)$-free graph is 3-colorable (the bound is attained by $C_5$) \cite{WS01},
and any $(P_5,C_4)$-free graph has $\chi(G)\le \lceil \frac{5}{4}\omega(G) \rceil$ (the equal-size blowups of $C_5$ shows that
the bound is optimal) \cite{CKS07}. For $t=6$, optimal bounds have been obtained for $C_3$ and $C_4$ as well \cite{BKM06,KMa18}. 
For $t\ge 7$, Gravier, Ho{\`a}ng and Maffray \cite{GHM03} showed that every $(P_t,C_3)$-free graph $G$ has $\chi(G)\le t-2$.
However, it is not known whether this is optimal. Recently, Kathie et al. \cite{CHPS20} showed that any $(P_7,C_4,C_5)$-free $G$ has $\chi(G)\le \frac{3}{2}\omega(G)$.

\subsection*{Our Contribution} In this paper, we give an optimal $\chi$-bound for the class of $(P_7,C_4,C_5)$-free graphs.
In particular, we prove the following theorem.

\begin{theorem}\label{thm:chibound}
Let $G$ be a $(P_7,C_4,C_5)$-free graph. Then $\chi(G)\le \lceil \frac{11}{9}\omega(G) \rceil$.
\end{theorem}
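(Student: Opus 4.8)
The plan is to induct on $|V(G)|$, using the decomposition theorem of \cite{CHPS20} to strip $G$ down to a basic class and then to color each basic class directly, the only genuine work being a tight color count extracted from the K\"onig theorem.

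First I would clear away the reductions that drive the induction. A disconnected $G$ is colored component by component, so assume $G$ is connected. If $G$ has a clique cutset $K$, write $G=G_1\cup G_2$ with $V(G_1)\cap V(G_2)=K$; by induction each $G_i$ is $\lceil\frac{11}{9}\omega(G_i)\rceil$-colorable and hence $\lceil\frac{11}{9}\omega(G)\rceil$-colorable, and because $K$ is complete its vertices get distinct colors in both pieces, so after permuting the palette of $G_2$ to match $G_1$ on $K$ the colorings glue. A homogeneous set (module) is absorbed by the same bookkeeping through substitution, using $\omega$ and $\chi$ of the factor and the quotient. Thus I may assume $G$ is connected and admits none of the decompositions produced by the structure theorem, so that \cite{CHPS20} places $G$ in one of its basic classes. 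Every perfect basic class is immediate, since there $\chi(G)=\omega(G)\le\lceil\frac{11}{9}\omega(G)\rceil$; all the content sits in the nonperfect basic class.

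The crux is to color that nonperfect class, where the constant $\frac{11}{9}$ is pinned down. I expect its members to be blowups over a fixed small base graph $H$: each vertex of $H$ is expanded into a clique (a stable expansion of size $\ge 2$ is tightly constrained by $C_4$-freeness, forcing a clique neighborhood), edges of $H$ become complete joins and non-edges become anticomplete pairs. For such a graph, $\omega(G)$ is the maximum total blob-size over the cliques of $H$, and a proper coloring is exactly a choice of a color set $S_v$ for each blob with $|S_v|$ at least the blob size and $S_u\cap S_v=\emptyset$ whenever $uv\in E(H)$. To minimize $|\bigcup_v S_v|$ I would split the palette into one block of $\omega(G)$ colors realizing a maximum clique and a second block of ``extra'' colors; the number of extra colors required is governed by the conflicts between the two sides of the base, which I would encode as an auxiliary bipartite graph. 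K\"onig's theorem then evaluates the minimum number of extra colors as a maximum matching (equivalently a minimum vertex cover), and pushing this count through the ceiling yields exactly $\chi(G)\le\lceil\frac{11}{9}\omega(G)\rceil$, with equality on the balanced blowup that realizes the bound.

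The main obstacle is precisely this last reduction: constructing the auxiliary bipartite graph so that its minimum vertex cover equals the \emph{true} number of extra colors rather than a mere upper bound, and then verifying that the worst case over all admissible blob-sizes is exactly $\frac{2}{9}\omega(G)$. The delicacy is the clash between the integrality of colors and the fractional ratio $\frac{11}{9}$: the matching bound must be shown to be \emph{attained}, and the rounding concealed in the ceiling must be controlled uniformly in $\omega(G)$, since it is this rounding that makes the constant optimal rather than merely asymptotic. Confirming that no configuration of base and blob-sizes slips past the K\"onig bound, and that the small values $\omega(G)\le 4$ (where the target is just $\omega(G)+1$) are covered by the same scheme, is the technical heart, and is where I expect the careful inductive arguments advertised in the abstract to be spent.
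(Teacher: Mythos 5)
Your top-level skeleton matches the paper: induct on $|V(G)|$, reduce along clique cutsets (the paper also removes strong stable sets, which subsumes universal vertices), and invoke the decomposition theorem of \cite{CHPS20} to land in a basic class. But the heart of the proof is exactly where your proposal breaks down, in two ways. First, the decomposition theorem produces \emph{two} nonperfect basic classes: nonempty blowups of the emerald, and 7-bracelets. A 7-bracelet is not a blowup of any fixed base graph: the adjacency between the parts $A_7^+$ and $A_2^-$ (and between $A_1^+,A_1^-$ and $A_3^-,A_6^+$) is arbitrary, subject only to the condition that every vertex on one side has a neighbor on the other and that the whole graph stays $(P_7,C_4,C_5)$-free. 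Your scheme of assigning a color set $S_v$ to each blob, disjoint across edges of a base graph $H$, cannot express this class at all, and these arbitrary bipartite complements are precisely where the paper uses K\"onig: since $A_7^+$ and $A_2^-$ are cliques, the complement of $G[A_7^+\cup A_2^-]$ is bipartite, and a maximum matching in it lets one reuse a color on each matched non-edge, which is what yields $\chi\le\lceil\frac{7}{6}\omega\rceil$ for 7-bracelets.

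Second, for the genuine blowup class (blowups of the emerald $E$), your proposed K\"onig argument is a hope rather than a proof. Minimizing the total number of colors for a blowup of a fixed \emph{nonperfect} graph is not a bipartite matching problem: the emerald contains odd holes, and the ``extra'' colors are forced by odd-hole obstructions, not by a minimum vertex cover of any bipartite auxiliary graph built from ``two sides of the base.'' You give no construction of that auxiliary graph, and none exists that makes the count exact. The paper instead proves this case by a long induction through a chain of induced subgraphs of $E$ (equal-size blowups of $C_7$, then $C_7+v$, $C_7+2t$, $C_7+2f$, then $E$ minus a vertex, then $E$ itself), splitting on whether the minimum blob size $p(G)$ is at most $2$ or at least $3$, repeatedly peeling off copies of $C_7[K_3]$ or $E[K_3]$, and using strong stable sets to force the blob sizes into extremal configurations; the constant $\frac{11}{9}$ is pinned by the single computation $\omega(E[K_3])=9$ and $\chi(E[K_3])=11$, exhibited via eleven stable $3$-sets covering each emerald vertex exactly three times. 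None of that case analysis is replaceable by the matching bound you describe, so the proposal as written establishes the theorem for neither basic class.
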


To prove the theorem, we use a decomposition theorem obtained in \cite{CHPS20} combined with careful inductive arguments and a nontrivial use of the K\"{o}nig theorem for bipartite matching.  It is shown in \cite{CHPS20} that for every positive integer $k$
there is a $(P_7,C_4,C_5)$-free graph $G_k$ such that $\omega(G_k)=3k$ and $\chi(G_k)\ge \frac{11}{9} \omega(G_k)$. 
Therefore, our result gives an optimal bound.

\section{Preliminaries}\label{sec:pre}

Let $G=(V,E)$ be a graph. 
The \emph{neighborhood} of a vertex $v$ is denoted by $N(v)=\{u\; |\; uv\in E\}$ and its degree by $d(v)=|N(v)|$.
The {\em closed neighborhood} of $v$, denoted by $N[v]$, is $N(v)\cup \{v\}$.
For a set $X\subseteq V$, we write $N(X)=\bigcup_{v\in X}N(v)\setminus X$.
For $x\in V$ and $S\subseteq V$, we let $N_S(x)$ be the set of neighbors of $x$ that are in $S$, that is, $N_S(x)=N_G(x)\cap S$.
Define $d_S(v)=|N_S(v)|$.
A vertex~$u$ is \emph{universal} in $G$ if $d_G(u)=|G|-1$.

For $S\subseteq V$, the subgraph \emph{induced} by $S$, is denoted by $G[S]$.
The {\it complement} of $G$ is the graph $\overline{G}$ with vertex set $V$ and edge set $\{uv\; |\; uv\notin E\}$.
A clique $K\subseteq  V$ is a \emph{clique cutset} if $G-K$ has more connected components than $G$. 
A clique $K$ is {\em maximal} if for any $v\notin K$, $K\cup \{v\}$ is not a clique.
A subset is {\em stable} if no two vertices in the set are adjacent. A stable set $S$ is {\em strong}
if it intersects every maximum clique of $G$, i.e., $\omega(G-S)=\omega(G)-1$.

For $X,Y\subseteq V$, we say that $X$ is \emph{complete} (resp. \emph{anticomplete}) to $Y$
if every vertex in $X$ is adjacent (resp. non-adjacent) to every vertex in $Y$.
Given a graph $G=(\{v_1,\ldots,v_t\},E(G))$ of order $t$ and graphs $H_1,\ldots,H_t$, we say that {\em a substitution of $G$ using
$H_1,\ldots,H_t$} is the graph obtained from $G$ by replacing $v_i$ with a copy of $H_i$ such that
$V(H_i)$ and $V(H_j)$ are complete if $v_iv_j\in E(G)$, and anticomplete if $v_iv_j\notin E(G)$.
A \emph{blowup} of $G$ is a substitution of $G$ using complete graphs $K_{s_1},\ldots, K_{s_t}$ of size $s_1,\ldots,s_t\ge 0$, respectively. Note that in the definition of blowup we allow complete graphs of size 0, which is equivalent to the operation of removing the corresponding vertex. Therefore, any proper induced subgraph of $G$ is a blowup of $G$ under our definition.
If $s_i\ge 1$ for each $1\le i\le t$, we say the resulting graph is a {\em nonempty blowup} of $G$.
If $s_1=\cdots =s_t=x$, we denote the resulting graph by $G[K_x]$ and call $G[K_x]$ an equal-size blowup of $G$.

\section{The Main Result}

In this section, we prove \autoref{thm:chibound}.
For that purpose, we need a decomposition theorem for $(P_7,C_4,C_5)$-free graphs.
To state the decomposition theorem, we need to introduce two special graphs called the emerald and 7-bracelets.
The {\em emerald} is a 11-vertex 4-regular graph shown in \autoref{fig:emerald}.
We denote this graph by $E$.

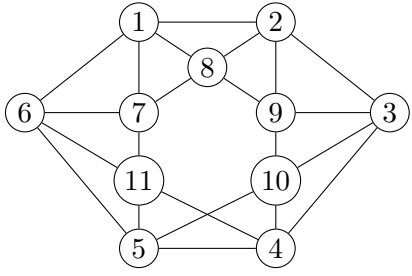
\begin{figure}[t]
	 \centering
	 \begin{tikzpicture}[scale=0.6]
	  \tikzstyle{vertex}=[draw, circle, fill=white!100, minimum width=4pt,inner sep=2pt]
	  
	  \node[vertex] (v1) at (-1.5,2) {1};
	  \node[vertex] (v2) at (1.5,2) {2};
	  \node[vertex] (v3) at (4,0) {3};
	  \node[vertex] (v4) at (1.5,-3) {4};
	  \node[vertex] (v5) at (-1.5,-3) {5};
	  \node[vertex] (v6) at (-4,0) {6};
	  \draw (v1)--(v2)--(v3)--(v4)--(v5)--(v6)--(v1);
	  
	  \node[vertex] (v14) at (-1.5,0) {7};
	  \draw (v14)--(v1) (v14)--(v6);
	  \node[vertex] (v25) at (1.5,0) {9};
	  \draw (v25)--(v2) (v25)--(v3);
	  \node[vertex] (v36) at (0,1) {8};
	  \draw (v36)--(v1) (v36)--(v2) (v36)--(v14) (v36)--(v25);
	  \node[vertex] (s1) at (-1.5,-1.5) {11};
	  \draw (s1)--(v4) (s1)--(v5) (s1)--(v6) (s1)--(v14);
	  \node[vertex] (s2) at (1.5,-1.5) {10};
	  \draw (s2)--(v3) (s2)--(v4) (s2)--(v5) (s2)--(v25);
	 \end{tikzpicture}
	 \caption{The emerald $E$.}
	 \label{fig:emerald}
	\end{figure}
	
\begin{figure}[t]
	\center
	\begin{tikzpicture}[scale=0.8]
	\tikzstyle{vertex}=[draw, circle, fill=black!15, minimum size=80pt, inner sep=0pt]
	\tikzstyle{set}=[draw,rectangle,minimum size=12pt]

	\node [vertex,label=above:$A_5$] (A5) at (3,12) {};
	
	\node [vertex,label=above:$A_4$] (A4) at (7,12) {};
	
	\node [vertex,label=right:$A_3$] (A3) at (10,8) {};
	
	\node [set,label=below:$A_3^-$] (A3-) at (9.5,8) {};
	
	\node [set,label=below:$A_3^0$] (A30) at (10.5,8) {};
	
 	\node [vertex,label=left:$A_6$] (A6) at (0,8) {};
	
	\node [set,label=below:$A_6^+$] (A6+) at (0.5,8) {};
	
	\node [set,label=below:$A_6^0$] (A60) at (-0.5,8) {};
	
	\node [vertex,label=right:$A_2$] (A2) at (10,3) {};
	
	\node [set,label=below:$A_2^-$] (A2-) at (9.5,3) {};
	
	\node [set,label=below:$A_2^0$] (A20) at (10.5,3) {};
	
	\node [vertex,label=left:$A_7$] (A7) at (0,3) {};
	
	\node [set,label=below:$A_7^+$] (A7+) at (0.5,3) {};
	
	\node [set,label=below:$A_7^0$] (A70) at (-0.5,3) {};
	
	\node [vertex,label=below:$A_1$] (A1) at (5,0) {};
	
	\node [set,label=below:$A_1^+$] (A1+) at (5.5,0.5) {};
	
	\node [set,label=below:$A_1^-$] (A1-) at (4.5,0.5) {};
	
	\node [set,label=below:$A_1^0$] (A10) at (5,-0.7) {};
	
	\draw[ultra thick] (A1)--(A2)--(A3)--(A4)--(A5)--(A6)--(A7)--(A1);
	
	\draw[blue] (A7+)--(A2-) (A6+)--(A1-) (A3-)--(A1+);
	\end{tikzpicture}
	\caption{Diagram for 7-bracelet. The thick line means the two sets are complete, no lines means anticompleteness, and the blue line means that the edges between the two sets are arbitrary subject to the constraint that the resulting graph is $(P_7,C_4,C_5)$-free graphs.}\label{fig:7-bracelet} 
\end{figure}
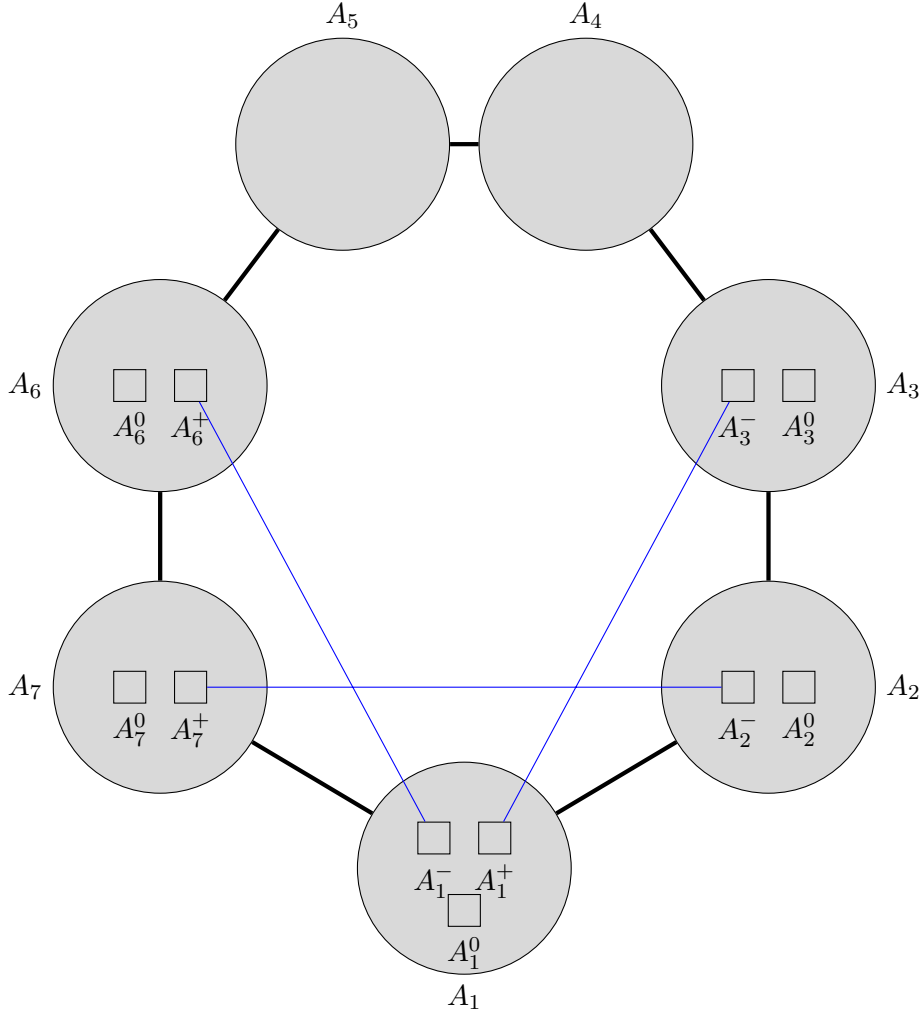

We say that $G$ is a {\em 7-bracelet} if $V(G)$ can be partition into 7 nonempty cliques $A_1,\ldots,A_7$ such that

$\bullet$ For each $1\le i\le 7$, $A_i$ is complete to $A_{i+1}\cup A_{i-1}$ and anticomplete to $A_{i+3}\cup A_{i-3}$;

$\bullet$ For $i\notin \{7,1,2\}$, $A_{i+1}$ and $A_{i-1}$ are anticomplete;

$\bullet$ $A_1$ can be partitioned into three (possibly empty) subsets $A_1^0,A_1^+,A_1^-$, $A_i$ can be partitioned into two (possibly empty) subsets $A_i^0,A_i^-$ for $i\in \{2,3\}$ and $A_i$ can be partitioned into two (possibly empty) subsets $A_i^0,A_i^+$ for $i\in \{6,7\}$ such that

\begin{enumerate}
\item $A_i^0$ is anticomplete to $A_{i+2}\cup A_{i-2}$ for each $i\in \{6,7,1,2,3\}$;
\item $A_1^+$ is anticomplete to $A_6$ and $A_1^-$ is anticomplete to $A_3$;
\item Every vertex in $A_7^+$ has a neighbor in $A_2^-$ and every vertex in $A_2^-$ has a neighbor in $A_7^+$; 
\item Every vertex in $A_1^+$ has a neighbor in $A_3^-$ and every vertex in $A_3^-$ has a neighbor in $A_1^+$, and 
every vertex in $A_1^-$ has a neighbor in $A_6^+$ and every vertex in $A_6^+$ has a neighbor in $A_1^-$. 
\end{enumerate}
See \autoref{fig:7-bracelet} for the diagram of a 7-bracelet.

\begin{theorem}[\cite{CHPS20}]\label{thm:P7C4C5}
Let $G$ be a $(P_7,C_4,C_5)$-free graph. Then $G$ contains a clique cutset or a universal vertex,
or is isomorphic to a nonempty blowup of the emerald (see \autoref{fig:emerald}) or a 7-bracelet.
\end{theorem}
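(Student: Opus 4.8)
The plan is to prove the contrapositive structural statement: assuming $G$ is $(P_7,C_4,C_5)$-free with no clique cutset and no universal vertex, I would show that $G$ is a nonempty blowup of the emerald (\autoref{fig:emerald}) or a $7$-bracelet (\autoref{fig:7-bracelet}). The first step is a \emph{hole-length} reduction. Seven consecutive vertices of an $8$-cycle induce a $P_7$, and more generally every $C_k$ with $k\ge 8$ contains an induced $P_7$ (the two endpoints of a $7$-vertex arc are at cyclic distance at least $2$), while $C_7$ and $C_6$ themselves are $P_7$-free for lack of vertices. Hence the only holes that can occur in $G$ are $C_6$ and $C_7$. A disconnected graph has the empty clique as a cutset, so I may assume $G$ is connected; and if $G$ had no hole at all it would be chordal, whence (having no clique cutset) it would be complete and thus possess a universal vertex. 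Both outcomes are excluded, so $G$ contains an induced $C_6$ or an induced $C_7$, and I split into two main cases, treating the $C_7$ case first.

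\textbf{Case A ($C_7$ present).} Let $H=v_1v_2\cdots v_7v_1$ be an induced $C_7$. I would first show $H$ dominates $G$: if some vertex were anticomplete to $V(H)$, a shortest path from it to $H$ together with a suitable arc of the cycle would yield an induced $P_7$. Next I classify, for each $u\notin V(H)$, the trace $N(u)\cap V(H)$. The $C_4$-constraint forbids $u$ from being adjacent to two cycle-vertices at cyclic distance $2$ without being adjacent to the one between them, and the $C_5$-constraint forbids the distance-$3$ patterns; the upshot is that every trace is a short arc of consecutive cycle-vertices. Grouping outside vertices by trace type, and using $C_4$-freeness to check that vertices of equal or neighboring type are pairwise adjacent, lets me inflate each $v_i$ into a clique $A_i$ and distribute the remainder into the subsets $A_i^0,A_i^\pm$. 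I would then verify the four bracelet axioms one at a time: the complete/anticomplete skeleton pattern among the $A_i$ comes directly from the trace analysis, while the ``neighbor'' conditions $(3)$ and $(4)$ are exactly what is forced by $C_4$-freeness together with the absence of a clique cutset.

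\textbf{Case B ($C_6$ present, no $C_7$).} Let $H=v_1v_2\cdots v_6v_1$. The argument begins as in Case~A ($H$ dominates, and traces are constrained arcs), but the absence of $C_7$ is decisive: it collapses the flexible ``blue-edge'' freedom of the bracelet and, after further use of the hypotheses, pins the admissible traces down to the five patterns realized by the non-hexagon vertices $7$--$11$ of the emerald, namely the three consecutive pairs $\{v_1,v_2\},\{v_2,v_3\},\{v_6,v_1\}$ and the two consecutive triples $\{v_3,v_4,v_5\},\{v_4,v_5,v_6\}$. I would show that two outside vertices of the same trace type are adjacent, and that two vertices of different types (including a hexagon vertex) are complete or anticomplete exactly as dictated by the edges of the emerald. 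Here the no-clique-cutset and no-universal-vertex hypotheses supply the rigidity: any partial (neither complete nor anticomplete) adjacency between two types would create a $C_4$ or $C_5$, reintroduce a $C_7$, or expose a clique cutset isolating the offending vertices. Assembling the eleven types as cliques then exhibits $G$ as a nonempty blowup of the emerald.

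The main obstacle is precisely the rigidity in Case~B: unlike the bracelet, the emerald admits no edge freedom, so the bulk of the work is the exhaustive verification that, once $C_7$ is excluded, no trace type beyond the eleven survives and no partial adjacency between types is possible. A secondary difficulty common to both cases concerns the ``long-range'' vertices—those adjacent to the hole only through an intermediate vertex, and those whose short trace (e.g.\ a single hexagon vertex) is not by itself forbidden by any induced $P_7$. For these, $P_7$-freeness alone does not decide the structure, and one must combine it delicately with the no-clique-cutset hypothesis to force each such vertex into the skeleton (or into a blowup class) rather than letting it sit in a separable piece. This interaction of the forbidden-subgraph conditions with the decomposition hypotheses is where the casework is densest and most error-prone.
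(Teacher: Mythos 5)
First, a point of reference: the paper does not prove \autoref{thm:P7C4C5} at all --- it is imported verbatim from \cite{CHPS20}, where the proof is a long structural analysis occupying much of a fifty-page paper. So there is no in-paper proof to match your proposal against; it has to stand on its own. It does not. What you have written is a roadmap whose skeleton (only $C_6$ and $C_7$ holes survive; chordal plus no clique cutset forces a universal vertex; split on whether a $C_7$ is present, aiming at the $7$-bracelet in the $C_7$ case and the emerald blowup in the $C_6$-only case) is plausible and broadly consistent with how such decomposition theorems are proved, but every hard step is deferred with ``I would show'' or ``exhaustive verification,'' and two of the steps you do state concretely are false as stated. (1) Domination of the hole by $P_7$-freeness alone fails: take an induced $C_7$, add a vertex $w$ complete to all seven cycle vertices, and a pendant vertex $u$ adjacent only to $w$. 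This graph is $(P_7,C_4,C_5)$-free --- every induced path through $w$ has at most three vertices, and paths avoiding $w$ live in the $C_7$ --- yet $u$ is anticomplete to the hole and no induced $P_7$ arises from ``a shortest path plus an arc.'' Domination must instead be extracted from the no-clique-cutset hypothesis (here $\{w\}$ is a clique cutset), and vertices at distance two or more from the hole require a genuinely different argument than the one you sketch. (2) ``Every trace is a short arc of consecutive cycle-vertices'' is wrong: on $C_7$ the $C_4$- and $C_5$-constraints exclude arcs of length exactly $6$ and $5$ respectively, but they permit the \emph{full} trace --- a vertex complete to the hole, as in the wheel --- and likewise on $C_6$ (arcs of length at most $3$, or all six). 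These hub vertices are precisely where universal vertices and clique cutsets originate, and they are not covered by your trace classification, so the subsequent assignment of outside vertices to the sets $A_i, A_i^0, A_i^{\pm}$ (or to the eleven emerald bags) does not get off the ground without a separate analysis of them.

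Beyond these two errors, the parts you yourself flag as ``densest and most error-prone'' --- the rigidity argument in the $C_6$ case, the elimination of single-vertex and distance-two traces, the verification of bracelet axioms (3) and (4), and the interaction between distinct holes (a graph may contain both $C_6$'s and $C_7$'s, and your Case B assumes $C_7$-freeness globally, which must be reconciled with Case A rather than merely postulated) --- are exactly where the content of the theorem lies, and none of it is carried out. One smaller mismatch with the paper's conventions: you dismiss disconnected graphs via ``the empty clique is a cutset,'' but under the paper's definition (a clique $K$ with $G-K$ having more components than $G$) the empty set is never a cutset, so the statement implicitly presupposes connectivity and your patch, while a common convention, is not available as written. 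In summary: the architecture is reasonable, but the proposal is an outline with false intermediate claims, not a proof, and it could not be accepted as a substitute for the argument in \cite{CHPS20}.
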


It is well-known that clique cutsets and universal vertices preserve any $\chi$-binding function. Therefore,
it suffices to consider blowups of the emerald and 7-bracelets.

\begin{theorem}\label{thm:7-bracelet}
If $G$ is a 7-bracelet, then $\chi(G)\le \lceil \frac{7}{6}\omega(G) \rceil$.
\end{theorem}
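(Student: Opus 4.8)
The plan is to exhibit a proper coloring of $G$ using at most $\lceil \frac{7}{6}\omega(G)\rceil$ colors. Write $a_i=|A_i|$ and $\omega=\omega(G)$, and read all indices modulo $7$. The starting observation is that a 7-bracelet is a blowup of $C_7$ together with exactly three ``chordal'' families of adjacencies, across the distance-two pairs $(A_7,A_2)$, $(A_1,A_3)$ and $(A_1,A_6)$; all remaining distance-two pairs $(A_2,A_4),(A_3,A_5),(A_4,A_6),(A_5,A_7)$ are anticomplete by the second defining property, and all distance-three pairs are anticomplete by the first. Since each $A_i$ is a clique, any color class meets each $A_i$ in at most one vertex, and the set of cliques it meets is a stable set in the ``pattern graph'' obtained from $C_7$ by adding the three chords; such a stable set has size at most $3$. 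Hence every color class has at most three vertices, which already gives the fractional lower bound $\chi(G)\ge \frac13\sum_i a_i$ and signals that the target is to reuse each color across a full independent triple of cliques as often as possible.

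The counting that makes the bound close is elementary: summing the seven clique inequalities $\omega\ge a_i+a_{i+1}$ (each $A_i\cup A_{i+1}$ is a clique) counts every $a_i$ twice, so $2\sum_i a_i\le 7\omega$, giving $\sum_i a_i\le \frac72\omega$ and therefore $\frac13\sum_i a_i\le \frac76\omega$. Thus if I can produce a coloring in which essentially every color is reused across three pairwise-nonadjacent cliques, so that the number of colors is close to $\frac13\sum_i a_i$, the bound $\lceil\frac76\omega\rceil$ follows. For a genuine $C_7$-blowup this is exactly the classical fact that $\chi(C_7[K_{a_1},\dots,K_{a_7}])=\max\bigl(\max_i(a_i+a_{i+1}),\lceil\frac13\sum_i a_i\rceil\bigr)$, realizable by assigning each $A_i$ a length-$a_i$ arc on a cyclic palette so that consecutive arcs are disjoint. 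I would first color the ``far'' region $A_3\cup A_4\cup A_5\cup A_6$, which is chord-free and hence a blowup of the path $3\text{-}4\text{-}5\text{-}6$ and therefore perfect, and then extend cyclically through $A_7,A_1,A_2$.

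The real work is the special region around $A_1$, where the three chordal bipartite graphs $B(A_7^+,A_2^-)$, $B(A_1^+,A_3^-)$ and $B(A_1^-,A_6^+)$ obstruct reuse: a color may be shared between $A_7$ and $A_2$ only along a pair of nonadjacent vertices, so the number of colors saved between these cliques equals a maximum matching in the bipartite complement $\overline B$ of $G$-adjacencies between $A_7$ and $A_2$ (note the $A_7^0$ and $A_2^0$ parts are complete to the other side in $\overline B$, since each $A_i^0$ is anticomplete to $A_{i\pm2}$). This is precisely where I expect to use König's theorem, via the following dichotomy. Either these complement-matchings are large, in which case reuse is plentiful and the color count stays near $\frac13\sum_i a_i$; or some complement-matching is small, in which case König (equivalently, Hall's deficiency form) yields a set $S\subseteq A_7$ with few $\overline B$-neighbors in $A_2$, i.e.\ a set complete in $G$ to a large $T\subseteq A_2$. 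Then $S\cup A_1\cup T$ is a clique, so $\omega$ is correspondingly larger and the extra colors forced by poor reuse are paid for by the increased budget $\frac76\omega$. The bookkeeping is arranged so that in every case the total is at most $\max\bigl(\omega,\lceil\frac13\sum_i a_i\rceil\bigr)\le\lceil\frac76\omega\rceil$.

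The main obstacle I anticipate is that the three chordal trade-offs cannot be run independently: they overlap at $A_1$ (which feeds both the $(A_1,A_3)$ and $(A_1,A_6)$ constraints) and at $A_7$ and $A_2$. Reconciling them simultaneously, so that the colors reused on the three pairs are mutually consistent and the cyclic arc assignment still closes up around the odd cycle, is the delicate step. This is where the careful inductive argument and a global rather than piecewise application of König must carry the weight, and I expect the bulk of the technical effort to go into verifying that a single choice of matchings in the special region yields a consistent proper coloring meeting the count.
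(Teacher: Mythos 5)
Your setup is sound and overlaps substantially with the paper's machinery: the counting $2\sum_i a_i\le 7\omega$, the observation that each color class meets at most three of the cliques $A_i$, and the use of K\"onig's theorem on the bipartite complement between a chordal pair (a complement matching is exactly a set of reusable colors, and a small matching forces a large clique $S\cup A_1\cup T$) are all genuinely the right ingredients; the last one is precisely how the paper colors the pair $(A_7^+,A_2^-)$ in its lemma on equal-size 7-bracelets with one uncertain pair. But your proposal stops exactly where the proof has to start. The step you defer --- reconciling the three chordal pairs, which overlap at $A_1$, $A_7$ and $A_2$, while closing the cyclic coloring around the odd cycle --- is not bookkeeping; it is the entire content of the theorem, and your intermediate claim $\chi(G)\le\max\bigl(\omega(G),\lceil\frac13\sum_i a_i\rceil\bigr)$ for arbitrary 7-bracelets is essentially a restatement of what must be proved, not a lemma you have established.

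The paper's route around this difficulty is a reduction your plan is missing. It inducts on $|G|$: if $G$ has a strong stable set (a stable set meeting every maximum clique), delete it and apply induction; otherwise it shows that if some consecutive pair $A_i\cup A_{i+1}$ fails to be a maximum clique, then $G$ contains an explicit strong stable set, built from vertices such as a vertex of $A_1^-$ complete to $A_6^+$, whose existence follows from $C_4$-freeness together with the defining neighbor conditions. Hence in a minimal counterexample every $A_i\cup A_{i+1}$ is a maximum clique, which forces $|A_1|=\cdots=|A_7|=x$ and $\omega(G[A_6^+\cup A_1^-]),\omega(G[A_1^+\cup A_3^-]),\omega(G[A_7^+\cup A_2^-])\le x$. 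This collapses all of your ``unequal sizes or large chordal cliques'' cases at once, so the dichotomy you envision never has to be run globally. In the remaining equal-size case the paper applies K\"onig only to the pair $(A_7^+,A_2^-)$; the two pairs meeting $A_1$ are handled instead by the nested-neighborhood (chain) structure that $C_4$-freeness imposes, which yields injective choices of non-neighbors $f:R_1^-\to A_6^+$ and $g:R_1^+\to A_3^-$ and lets the colorings of $A_1$, $A_3$, $A_6$ be made mutually consistent. Without either this reduction or a worked-out substitute for it, your proposal remains a plausible plan with a genuine gap at its center.
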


\begin{theorem}\label{thm:emerald}
If $G$ is a blowup of the emerald, then $\chi(G)\le \lceil \frac{11}{9}\omega(G) \rceil$.
\end{theorem}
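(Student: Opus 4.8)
The plan is to work with the reformulation of a blowup as a weighted coloring. Write $G=E[s]$ for the blowup of the emerald (\autoref{fig:emerald}) with clique sizes $s_1,\dots,s_{11}\ge 0$ on the vertices of $E$. A direct check shows that every maximal clique of $E$ is one of its $11$ triangles (each edge of $E$ lies in a triangle and $E$ contains no $K_4$), so $\omega(G)=\max_T\sum_{i\in T}s_i=:W$, the maximum ranging over those triangles. A proper coloring of $G$ is exactly a family of stable sets of $E$ (the color classes, with multiplicity) covering each vertex $i$ at least $s_i$ times, and $\chi(G)$ is the least size of such a family. Two regularity facts about $E$, verified by inspection, drive everything: each vertex lies in exactly $3$ of the $11$ triangles, and in exactly $6$ of the $22$ maximum ($3$-element) stable sets. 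Hence weighting every maximum stable set by $1/6$ covers each vertex exactly once with total weight $22/6=11/3$, while the assignment $y_i\equiv 1/3$ is dual-feasible of value $11/3$; thus $\chi_f(E)=11/3$, and more generally linear-programming duality yields the fractional bound $\chi_f(G)\le\tfrac{11}{9}W$, which reduces to a finite verification over the extreme points of the fractional stable-set polytope of $E$. This explains why the constants $11$ and $9$ are the right ones.

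Since the fractional bound does not by itself control the integer chromatic number of a blowup, the heart of the proof is an induction on $W$ producing an \emph{integral} coloring that meets the ceiling. The engine is a balanced peeling step: for $W$ large I would remove a batch of exactly $11$ stable sets of $E$ whose deletion lowers the weight of every triangle by at least $9$, so that $\omega$ drops from $W$ to $W-9$. Applying the inductive hypothesis to the residual blowup then gives $\chi(G)\le 11+\big\lceil\tfrac{11}{9}(W-9)\big\rceil=11+\big(\lceil\tfrac{11}{9}W\rceil-11\big)=\lceil\tfrac{11}{9}W\rceil$, the cancellation being exactly why the two constants telescope with no loss. The bounded cases $W\le 9$ (where $s_i\le W$ is bounded, leaving finitely many weight vectors, each needing at most $\lceil\tfrac{11}{9}W\rceil\le 11$ colors) form the base and are dispatched directly.

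The delicate point — and where the \emph{K\"onig theorem} enters — is the existence of the balanced batch. Lowering a single triangle's weight by $9$ forces $9$ of the chosen $11$ stable sets to meet it, and all heavy triangles must be served at once while no vertex is decremented below $0$; when the weights are unbalanced (some $s_i$ far larger than others, or several $s_i$ small even though $W$ is large) one cannot simply take a vertex-$3$-regular sub-family of the maximum stable sets. I would model the requirement as a degree-constrained bipartite (transportation) problem, with the heavy triangles on one side and the available ``reduction budget'' at the vertices (or the stable sets themselves) on the other, exploiting the near-interval structure of $E$ visible from its description as the $7$-hole $\{2,3,10,5,11,7,8\}$ together with the four apices $\{1,4,6,9\}$, each adjacent to a consecutive arc of that hole. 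K\"onig's theorem (max matching $=$ min vertex cover in bipartite graphs) would then certify that an integral batch realizing the prescribed per-triangle reductions exists. The main obstacle I expect is precisely this integral feasibility at the boundary: guaranteeing the batch for \emph{every} weight distribution and splicing it with the small-$W$ base so that the result is the exact ceiling $\lceil\tfrac{11}{9}W\rceil$ rather than $\tfrac{11}{9}W+O(1)$.
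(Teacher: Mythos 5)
Your overall architecture --- peel off a batch of $11$ stable sets of $E$ that lowers every heavy triangle's weight enough for $\omega$ to drop by $9$, then telescope $11+\lceil\tfrac{11}{9}(\omega-9)\rceil=\lceil\tfrac{11}{9}\omega\rceil$ --- is exactly what the paper does in the case $p(G)\ge 3$, i.e.\ when every clique $L_u$ of the blowup has size at least $3$ (\autoref{lem:min size at least 3}): there the batch is simply $E[K_3]$, obtained by taking three vertices from each $L_u$, and its chromatic number $11$ is computed by an explicit list of stable sets (\autoref{equal blowup of emerald}). The gap is the complementary case $p(G)\le 2$, which you yourself identify as the delicate point but then only sketch. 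You assert that a degree-constrained bipartite transportation model plus K\"onig's theorem would certify that a balanced batch exists for every weight distribution with $\omega$ large; this is a plan, not a proof, and the tool does not obviously fit. The system you must solve selects an $11$-element multiset from the $22$ maximum stable sets of $E$ subject to covering constraints (each triangle $T$ of weight $w_T>\omega-9$ must be met by at least $w_T-(\omega-9)$ of the chosen sets) and packing constraints ($n_v\le |L_v|$ at the low-capacity vertices); the incidence structure between triangles and stable sets is not bipartite --- the triangle graph of $E$ is the odd cycle $C_{11}$ --- and is not totally unimodular in any evident way, so K\"onig/Hall gives no off-the-shelf integral feasibility. (Note that the paper's K\"onig argument lives in the 7-bracelet proof, \autoref{lem:7-bracelet with one pair}, a genuinely bipartite situation; it is not used for the emerald at all.) In the hard weight patterns, e.g.\ that of \autoref{fig:special emerald} with $|L_8|=2$ and $|L_4|,|L_6|,|L_9|$ bounded while the remaining cliques grow, the covering demands force near-equality in a whole chain of counting inequalities, so even if the batch always exists for $\omega\ge 10$, proving it would require a case analysis comparable to the one you are trying to avoid; nothing in your proposal supplies it.

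It is worth recording how differently the paper proceeds on precisely this case: for $p(G)\le 2$ it never uses an $11$-set batch. It instead proves the stronger bound $\chi(G)\le\lceil\tfrac{7}{6}\omega(G)\rceil+1$ (\autoref{lem:min size at most 2-7/6 bound}) by inducting through blowups of proper induced subgraphs of the emerald ($C_7$, $C_7+v$, $C_7+2t$, $C_7+2f$, $E-8$), peeling copies of $C_7[K_3]$ (seven colors for a drop of six in $\omega$) rather than of $E[K_3]$, and then separately kills the finitely many clique numbers $\omega\in\{3,4,7,8,9,13\}$ for which $\lceil\tfrac{7}{6}\omega\rceil+1>\lceil\tfrac{11}{9}\omega\rceil$, using strong stable sets and explicit colorings. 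Your base cases are also thinner than claimed: ``dispatched directly'' for $\omega\le 9$ already includes proving $\chi(E[K_3])=11$ (which needs an explicit construction) and all configurations with $\omega\in\{7,8,9\}$, which in the paper consume several claims of their own. So the constants, the telescoping arithmetic, and the $p(G)\ge3$ half of your plan are right, but the statement doing the real work --- existence of the balanced batch for every weight vector with $\omega\ge10$ --- is left unproven, and that is a genuine gap rather than a stylistic difference.
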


\begin{proof}[Proof of \autoref{thm:chibound} (assuming \autoref{thm:7-bracelet} and \autoref{thm:emerald})]

We use induction on $|G|$. We may assume that $G$ is connected, for otherwise
we apply the inductive hypothesis on each connected component.
If $G$ contains a clique cutset $K$ that disconnects $H_1$ from $H_2$,
let $G_i=G[H_i\cup K]$ for $i=1,2$. Then the inductive hypothesis implies that
$\chi(G_i)\le \lceil \frac{11}{9}\omega(G_i) \rceil$ for $i=1,2$. Note that $\chi(G)=\max\{\chi(G_1),\chi(G_2)\}$
and so $\chi(G)\le \lceil \frac{11}{9}\omega(G) \rceil$. 
If $G$ contains a strong stable set $S$,  then applying the inductive hypothesis to $G-S$ implies that
$\chi(G-S)\le \lceil \frac{11}{9}\omega(G-S) \rceil$. Since $\omega(G-S)\le \omega(G)-1$, it follows that
$\chi(G)\le \chi(G-S)+1\le \lceil \frac{11}{9}\omega(G) \rceil$.
So $G$ has no clique cutsets or strong stable sets. In particular, $G$ has no universal vertices.
It follows then from \autoref{thm:P7C4C5} that $G$ is a blowup of the emerald or a 7-bracelet.
Now the theorem follows from \autoref{thm:7-bracelet} and \autoref{thm:emerald}.
\end{proof}

We prove \autoref{thm:7-bracelet} and \autoref{thm:emerald} in the next two sections.

\section{Color 7-bracelets}

Let $G$ be a 7-bracelet with notations introduced in \autoref{fig:7-bracelet}.
By the definition of 7-bracelet, $A_7^+=\emptyset$ if and only if $A_2^-=\emptyset$, 
$A_6^+=\emptyset$ if and only if $A_1^-=\emptyset$, $A_1^+=\emptyset$ if and only if $A_3^-=\emptyset$.
If $A_6^+=A_3^-=\emptyset$, we call $G$ a {\em 7-bracelet with one uncertain pair}.

For a graph $G$, let $\beta(G)$ and $\tau(G)$ be the maximum size of a matching and minimum size of a vertex cover, respectively.
The following is a famous theorem due to K\"{o}nig.
\begin{lemma}[see for example \cite{BM08}]
For any bipartite graph $G$, $\beta(G)=\tau(G)$.
\end{lemma}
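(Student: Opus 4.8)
The plan is to establish the two inequalities $\beta(G)\le\tau(G)$ and $\tau(G)\le\beta(G)$ separately. The first is immediate and holds for every graph: fix a maximum matching $M$; since the edges of $M$ are pairwise vertex-disjoint, any vertex cover must use a distinct vertex to cover each edge of $M$, whence $\tau(G)\ge|M|=\beta(G)$. All the content lies in the reverse inequality, and this is where bipartiteness enters.

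To prove $\tau(G)\le\beta(G)$, let $(X,Y)$ be a bipartition of $G$ and let $M$ be a maximum matching. The idea is to build a vertex cover of size exactly $|M|$ by an alternating-path argument. Let $U\subseteq X$ be the set of $M$-unmatched vertices, and let $Z$ be the set of all vertices reachable from $U$ by an $M$-alternating path starting with a non-matching edge. Put $S=Z\cap X$ and $T=Z\cap Y$, and set $K=(X\setminus S)\cup T$. I would then verify that $K$ is a vertex cover and that $|K|=|M|$.

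For the cover property, suppose some edge $xy$ (with $x\in X$, $y\in Y$) were uncovered, i.e.\ $x\in S$ and $y\notin T$; one derives a contradiction by a case split on whether $xy\in M$. If $xy\notin M$, then extending the alternating path that reaches $x$ along the non-matching edge $xy$ shows $y\in T$. If $xy\in M$, then $x$ is matched, so $x\in S\setminus U$, and tracing the alternating path into $x$ (whose last edge must be a matching edge) forces its partner $y$ into $T$. Either way we contradict $y\notin T$. For the size, the crucial observation is that $M$ restricts to a bijection between $S\setminus U$ and $T$: every $y\in T$ is matched (otherwise the alternating path to $y$ would be augmenting, contradicting maximality of $M$), and its partner lies in $S\setminus U$; conversely each matched $x\in S$ is reached by an edge of $M$ whose other endpoint lies in $T$. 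Hence $|T|=|S|-|U|$, and a short count gives $|K|=|X\setminus S|+|T|=|X|-|U|=|M|$, so $\tau(G)\le|M|=\beta(G)$.

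The step I expect to require the most care is confirming the bijection between $S\setminus U$ and $T$, since it is precisely here that maximality of $M$ (no augmenting path from $U$) and the bipartite structure are both essential: in a non-bipartite graph an alternating walk can re-enter $X$ without crossing a matching edge, breaking the parity that underlies the argument. Combining the two inequalities yields $\beta(G)=\tau(G)$. As an alternative route, one could instead invoke the max-flow--min-cut theorem after orienting $G$ into a unit-capacity network from a source through $X$ to $Y$ to a sink; there integral maximum flows correspond to maximum matchings and minimum cuts to minimum vertex covers, and the alternating-path proof above is essentially the combinatorial specialization of that duality.
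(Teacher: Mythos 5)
Your proof is correct, but there is nothing in the paper to compare it against: the paper states this lemma as K\H{o}nig's theorem and cites it to a textbook (``see for example [Bondy--Murty]'') without giving any proof, since it is a classical result being used as a black box. What you have written is the standard alternating-path proof of K\H{o}nig's theorem, and the details check out: the easy direction $\beta(G)\le\tau(G)$ holds in every graph since a cover needs a distinct vertex per matching edge; for the converse, your set $K=(X\setminus S)\cup T$ is indeed a cover (in the case $xy\in M$ the matching edge at $x$ is unique, so the edge entering $x$ on the alternating path is $xy$ itself, placing $y\in T$; in the case $xy\notin M$ either $y$ already lies on the path or the path extends to it, and in both cases $y\in T$), and your counting is sound, using implicitly that $U\subseteq S$ (trivial paths), that $X\setminus S$ and $T\subseteq Y$ are disjoint, and that the number of $M$-matched vertices of $X$ equals $|M|$ because each matching edge has exactly one endpoint in $X$ --- all of which hold. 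Your identification of the bijection between $S\setminus U$ and $T$ as the crux is apt: it is exactly where maximality of $M$ (no augmenting path) and bipartiteness (the parity of alternating walks, which odd cycles would destroy) are used, and your max-flow--min-cut remark correctly describes the same duality in network form. In short, you have supplied a complete and correct proof of a statement the paper deliberately leaves unproved; relative to the paper's purposes the only ``difference in route'' is that you prove it at all.
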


We first color 7-bracelets with one uncertain pair.

\begin{lemma}[Equal-size 7-bracelet with one uncertain pair]\label{lem:7-bracelet with one pair}
Let $G$ be a 7-bracelet $(A_1,\ldots,A_7)$ such that 

$\bullet$ $|A_i|=x$ for each $1\le i\le 7$, where $x$ is a positive integer;

$\bullet$ $A_6^+=A_3^-=\emptyset$;

$\bullet$ $\omega(G[A_7^+\cup A_2^-])\le x$.

Then $\chi(G)\le \lceil \frac{7}{6}\omega(G) \rceil$.
\end{lemma}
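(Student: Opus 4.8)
The plan is to reduce the problem to multicoloring the cycle $C_7$ and then to absorb the single surviving family of cross edges, those between $A_7^+$ and $A_2^-$, using König's theorem. First I would pin down $\omega(G)$. Since $A_6^+=A_3^-=\emptyset$, the definition of a 7-bracelet forces $A_1$ to be anticomplete to $A_3\cup A_6$, so the only adjacencies between non-consecutive cliques are the cross edges between $A_2^-$ and $A_7^+$. Consequently the index set of any clique of $G$ is a clique of the graph $R$ obtained from $C_7$ on $\{1,\dots,7\}$ by adding the chord $\{2,7\}$. The only triangle of $R$ is $\{1,2,7\}$, and the hypothesis $\omega(G[A_7^+\cup A_2^-])\le x$ gives $\omega(G[A_2\cup A_7])\le x$; together with $|A_1|=x$ this bounds every clique by $2x$, a value attained by $A_1\cup A_2$. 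Hence $\omega(G)=2x$ and the goal becomes $\chi(G)\le\lceil\frac{7}{6}\cdot 2x\rceil=\lceil\frac{7x}{3}\rceil$.

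Next I would delete the cross edges; the resulting graph is the equal-size blow-up $C_7[K_x]$, whose chromatic number equals the $x$-fold chromatic number of $C_7$, namely $2x+\lceil x/3\rceil=\lceil 7x/3\rceil$ (an optimal coloring distributes the palette over the seven maximum stable sets $\{i,i+2,i+4\}$ of $C_7$, with a routine adjustment when $3\nmid x$). Fix such a coloring; it yields color sets $S_1,\dots,S_7$ with $|S_i|=x$ and $S_i\cap S_{i+1}=\emptyset$ (indices mod $7$). I would transfer this coloring to $G$, keeping the sets $S_i$ but remaining free to choose, inside each clique, the bijection between $A_i$ and $S_i$. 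For every pair of cliques other than $\{A_2,A_7\}$ adjacency is ``all or nothing'', so disjointness of consecutive color sets already makes the coloring proper there. The only possible monochromatic edges are cross edges $pq$ with $p\in A_2^-$ and $q\in A_7^+$ whose common color lies in $S_2\cap S_7$; equivalently, each color of $S_2\cap S_7$ must be placed on a non-adjacent pair in $A_2\times A_7$.

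The crux is to realize the $m:=|S_2\cap S_7|\le x$ shared colors on pairwise disjoint non-edges, and this is where König's theorem enters. Let $\overline{B}$ be the bipartite graph with parts $A_2$ and $A_7$ in which $p\in A_2$ and $q\in A_7$ are adjacent precisely when $pq\notin E(G)$. A stable set of $\overline{B}$ meeting both sides is a set $S_A\cup S_B$ with $S_A\times S_B$ entirely cross-adjacent in $G$, i.e.\ a clique of $G[A_7^+\cup A_2^-]$, and so has size at most $x$; a stable set contained in one side also has size at most $x$. Hence the maximum stable set of $\overline{B}$ has size $x$, so (as the complement of a stable set is a vertex cover) its minimum vertex cover has size $2x-x=x$, and by König, $\tau(\overline B)=\beta(\overline B)=x$, i.e.\ $\overline{B}$ has a perfect matching. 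Placing the $m$ shared colors on $m$ edges of this matching and the remaining colors arbitrarily gives a proper $\lceil 7x/3\rceil$-coloring of $G$, which completes the proof. I expect the main obstacle to be exactly this middle step: recognizing that the hypothesis $\omega(G[A_7^+\cup A_2^-])\le x$ is precisely the condition that caps the stable sets of $\overline{B}$ at $x$, and hence, via the identity relating stable sets and vertex covers together with König, forces a perfect matching that lets every shared color avoid a cross edge.
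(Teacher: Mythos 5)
Your proof is correct and takes essentially the same route as the paper's: establish $\omega(G)=2x$, color $G$ minus the $A_7^+$--$A_2^-$ cross edges as the equal-size blowup $C_7[K_x]$ with $\lceil 7x/3\rceil$ colors, and then invoke K\"onig's theorem on the bipartite non-adjacency graph between $A_2$ and $A_7$ so that every color shared by $S_2$ and $S_7$ lands on a non-edge. The only (cosmetic) difference is that you apply K\"onig to all of $A_2\cup A_7$ and extract a perfect matching, whereas the paper first places shared colors on $A_2^0\cup A_7^0$ and matches only inside $A_2^-\cup A_7^+$; the two bookkeeping schemes are interchangeable.
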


\begin{proof}
Since $A_6^+=A_3^-=\emptyset$, $A_i=A_i^0$ for $i\in \{6,1,3\}$. Note that $\omega(G)=2x$ since $\omega(G[A_7^+\cup A_2^-])\le x$.
We give a coloring of $G$ using $\lceil \frac{7}{6}\omega(G) \rceil =\lceil \frac{7}{3}x \rceil$ colors.
Consider the following coloring $\phi$ of $G$: color vertices in $A_i$ using colors $(i-1)x+1,(i-1)x+2,\ldots,(i-1)x+x$ for each $i$,
where colors are taken modulo $\lceil \frac{7x}{3} \rceil$.
Clearly, $\phi$ is a proper coloring of $G-F$, where $F$ is the set of edges between $A_7^+$ and $A_2^-$. 

It remains to show that it is possible to assign colors in $\phi(A_2)$ to the vertices of $A_2$ and assign colors in $\phi(A_7)$ to the vertices of $A_7$ in such a way that no edge between $A_7^+$ and $A_2^-$ is monochromatic.
If a color $c\in \phi(A_2)\cap \phi(A_7)$ is assigned to a vertex in $A_2^0$, then it can be assigned to any vertex in $A_7^+$. 
Similarly, if a color $c\in \phi(A_2)\cap \phi(A_7)$ is assigned to a vertex in $A_7^0$, then it can be assigned to any vertex in $A_2^-$. However, if a color $c\in \phi(A_2)\cap \phi(A_7)$ is not assigned to a vertex in $A_2^0\cup A_7^0$, it must be used on a non-edge between $A_7^+$ and $A_2^-$.

Let $t=|A_2^0|+|A_7^0|$. Then $G':=G[A_2^-\cup A_7^+]$ has order $2x-t$. 
Since $\omega(G')\le x$, $\alpha(\overline{G'})\le x$. 
By K\"{o}nig Theorem, $\beta(\overline{G'})=\tau(\overline{G'})$.
Since $\alpha(\overline{G'})+\tau(\overline{G'})\ge |G'|$, it follows that
\begin{align*}
\beta(\overline{G'}) & \ge |G'|-\alpha(\overline{G'}) \\
									& \ge x-t \\
									& \ge |\phi(A_2)\cap \phi(A_7)|-t.
\end{align*}

Now consider the following assignment of colors to $A_2\cup A_7$ from $\phi(A_2)\cup \phi(A_7)$. 

$\bullet$ Color the vertices in $A_2^0\cup A_7^0$ using colors in $\phi(A_2)\cap \phi(A_7)$;

$\bullet$ For each $c\in \phi(A_2)\cap \phi(A_7)$ not used on $A_2^0\cup A_7^0$ , assign $c$ to a non-edge $e_c$ between $A_7^+$ and $A_2^-$. 
Since $\beta(\overline{G'})\ge |\phi(A_2)\cap \phi(A_7)|-t$, it is possible to choose $e_c$ so that $e_c\cap e_{c'}=\emptyset$
for $c,c'\in \phi(A_2)\cap \phi(A_7)$ and $c\neq c'$.

$\bullet$ Color the remaining vertices in $A_2$ and $A_7$ in an arbitrary way.

It is routine to verify that this is a proper coloring of $G$.
\end{proof}

\begin{lemma}[Equal-size 7-bracelet]\label{lem:7-bracelet}
Let $G$ be a 7-bracelet such that 

$\bullet$ $|A_i|=x$ for each $1\le i\le 7$, where $x$ is a positive integer;

$\bullet$ $\omega(G[A_6^+\cup A_1^-]),\omega(G[A_1^+\cup A_3^-]),\omega(G[A_7^+\cup A_2^-])\le x$.

Then $\chi(G)\le \lceil \frac{7}{6}\omega(G) \rceil$.
\end{lemma}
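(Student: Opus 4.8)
The plan is to recycle the cyclic interval coloring $\phi$ of \autoref{lem:7-bracelet with one pair} and to repair all three uncertain pairs simultaneously. First I would pin down $\omega(G)$. A clique of $G$ meets a set of cliques $A_i$ whose indices are pairwise at distance at most $2$ in the $7$-cycle, hence at most three indices, and three indices force a consecutive window $\{i-1,i,i+1\}$; moreover $A_{i-1}$ and $A_{i+1}$ can be joined only when $i\in\{7,1,2\}$, in which case the part of the clique outside the middle $A_i$ lies in one of $A_7^+\cup A_2^-$, $A_1^+\cup A_3^-$, $A_6^+\cup A_1^-$ and so has size at most $x$ by hypothesis; together with the at most $x$ vertices in $A_i$ this gives $|C|\le 2x$, and $A_i\cup A_{i+1}$ shows $\omega(G)=2x$. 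The target is therefore $N:=\lceil\frac{7}{6}\omega(G)\rceil=\lceil\frac{7x}{3}\rceil$. Coloring $A_i$ with the length-$x$ block $\{(i-1)x+1,\dots,ix\}\pmod N$, the six interior consecutive pairs are properly colored since a block of $2x$ consecutive integers injects modulo $N\ge 2x$, and the wrap pair $A_7A_1$ is fine because $N\le\frac{5x}{2}$ for $x\ge2$ (the case $x=1$ is trivial, as then $G$ is an induced subgraph of $C_7$). So $\phi$ is proper except possibly on edges inside the three pairs.

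The heart of the argument is that each pair can be repaired by permuting the colors inside just one of its two cliques, and that these cliques can be taken pairwise distinct: recolor $A_2$ to fix $(A_7^+,A_2^-)$, recolor $A_3$ to fix $(A_1^+,A_3^-)$, and recolor $A_6$ to fix $(A_1^-,A_6^+)$. A within-clique permutation does not change $\phi(A_j)$, so it keeps every edge from $A_j$ to its two complete neighbors properly colored (their color sets are disjoint), while the remaining edges from $A_j$ are either absent or belong to the very pair being repaired. Since $A_2,A_3,A_6$ are distinct, the three recolorings are independent and can be performed at once.

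To carry out one repair, say of $A_3$ with the coloring of $A_1$ held fixed, I would set up a bipartite matching between the colors $\phi(A_3)$ and the vertices of $A_3$, forbidding the assignment of $c$ to $w$ exactly when $c\in\phi(A_1)$, the representative $v_c$ of $c$ in $A_1$ lies in $A_1^+$, $w\in A_3^-$ and $v_c\sim w$. A perfect matching is precisely a recoloring of $A_3$ with no monochromatic edge between $A_1^+$ and $A_3^-$, and Hall's condition holds automatically: for any set $T$ of forbidden colors the blocked vertices $\{w\in A_3^-:w\sim v_c\text{ for all }c\in T\}$ together with $\{v_c:c\in T\}$ form a clique of $G[A_1^+\cup A_3^-]$, so their total number is at most $\omega(G[A_1^+\cup A_3^-])\le x=|A_3|$. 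This is the single point where the three clique-number hypotheses are used; König's theorem could replace Hall here verbatim.

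The step I expect to be the real obstacle is the interaction at $A_1$, which sits in two of the pairs. A direct imitation of \autoref{lem:7-bracelet with one pair}, which recolors both sides of a pair and parks the shared colors in the ``$0$''-parts, would make the repairs of $(A_1^+,A_3^-)$ and $(A_1^-,A_6^+)$ compete both for the recoloring of $A_1$ and for the absorbing slots $A_1^0$; and the color sets these two pairs share with $A_1$ genuinely overlap, so this competition is not vacuous. The one-sided repair above is engineered to sidestep it: $A_1$ is never recolored, so the two pairs through it are decoupled and each Hall condition holds no matter how $A_1$ is colored. The only care left is the routine verification of interval disjointness modulo $N$ across the residues of $x\bmod 3$, and the bookkeeping that confirms all three Hall instances reduce to the clique bound above.
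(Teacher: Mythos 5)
Your proof is correct, and it takes a genuinely different---and substantially simpler---route than the paper's. You share the same starting point (the cyclic block coloring $\phi$ and the observation that $\omega(G)=2x$), but the repair mechanism diverges completely. The paper first proves \autoref{lem:7-bracelet with one pair} by recoloring \emph{both} sides of the pair $(A_7^+,A_2^-)$, parking shared colors on $A_2^0\cup A_7^0$ or on a matching of non-edges in the complement (this is where it invokes K\"onig); it then faces exactly the interaction you identify---the two pairs through $A_1$ compete for the recoloring of $A_1$---and resolves it the hard way: it partitions the colors into the classes $C_6,C_{61},C_{613},C_{13},C_3$, assumes $x=3k$ for clarity, splits into cases according to $|A_1^+|\le k$ or $|A_1^+|>k$, exploits $C_4$-freeness to get nested neighborhoods in order to define the sets $A_1^{--},A_1^{++}$, and in the hard case builds injective non-neighbor maps $f\colon R_1^-\to A_6^+$ and $g\colon R_1^+\to A_3^-$ by an inductive counting argument. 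Your one-sided repair---never recolor $A_1$ or $A_7$, and fix each uncertain pair by permuting colors inside $A_2$, $A_3$, $A_6$ alone---dissolves that difficulty before it arises, and your verification of Hall's condition is the right one: the representatives $\{v_c:c\in T\}$ together with the commonly blocked vertices form a clique of $G[A_1^+\cup A_3^-]$ (resp.\ the other two pairs), so the clique-number hypothesis $\le x$ gives Hall directly, since distinct colors have distinct representatives in the clique $A_1$ (resp.\ $A_7$). What your approach buys: it is uniform in $x$ (no $x\bmod 3$ case analysis, only the wrap-around check $2\lceil 7x/3\rceil\le 5x\le 3\lceil 7x/3\rceil$ for $x\ge 2$, which holds, with $x=1$ forcing $G=C_7$ since the hypothesis $\omega\le 1$ on the pairs empties all $+/-$ parts); it makes the separate one-pair lemma unnecessary, since it falls out as a special case; it never uses $C_4$-freeness of $G$; and it isolates the use of the three clique-number hypotheses to a single line. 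I see nothing the paper's longer argument gains in exchange for this lemma; the only caveat is that your sketch leaves the three Hall verifications and the interval-disjointness bookkeeping as ``routine,'' but both check out exactly as you predict.
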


\begin{proof}
Note that $\omega(G)=2x$ since $\omega(G[A_6^+\cup A_1^-]),\omega(G[A_1^+\cup A_3^-]),\omega(G[A_7^+\cup A_2^-])\le x$.
We give a coloring of $G$ using $\lceil \frac{7}{6}\omega(G) \rceil =\lceil \frac{7}{3}x \rceil$ colors.
Consider the following coloring $\phi$ of $G$: color vertices in $A_i$ using colors $(i-1)x+1,(i-1)x+2,\ldots,(i-1)x+x$ for each $i$,
where colors are taken modulo $\lceil \frac{7x}{3} \rceil$. 
Let $F_{61}$ ($F_{13}$) be the set of edges between $A_6$ ($A_3$) and $A_1$.
By \autoref{lem:7-bracelet with one pair}, $\phi$ is a proper coloring of $G-(F_{61}\cup F_{13})$.
It remains to show that it is possible to assign colors from $\phi(A_1),\phi(A_3),\phi(A_6)$ to vertices in $A_1,A_3,A_6$ respectively
so that the resulting coloring is a proper coloring of $G$. 

For clarity, we assume that $x=3k$ for some $k\ge 1$ (and the cases that $x=3k+1$ or $x=3k+2$ are similar).
Note that $\phi(A_1)=\{1,\ldots,3k\}$, $\phi(A_3)=\{6k+1,\ldots, 1,\ldots,2k\}$, $\phi(A_6)=\{k+1,\ldots,4k\}$. 
Then $C:=\phi(A_1)\cup \phi(A_3)\cup \phi(A_6)$ has size $5k$.
Let 
\begin{align*}
	& C_6  = C\setminus (\phi(A_1)\cup \phi(A_3)); \\
&	C_{61}  = (\phi(A_1)\cap \phi(A_6)) \setminus \phi(A_3); \\
&	C_{613}  = \phi(A_1)\cap \phi(A_6)\cap \phi(A_3); \\
&	C_{13}  = (\phi(A_1)\cap \phi(A_3)) \setminus \phi(A_6); \\
&	C_3  = C\setminus (\phi(A_1)\cup \phi(A_6)). \\
\end{align*}
Note that $C=C_6\cup C_{61}\cup C_{613}\cup C_{13}\cup C_3$ and $|C_6|=|C_{61}|=|C_{613}|=|C_{13}|=|C_3|=k$.

If $|A_1^+|\le k$, then color vertices in $A_1^+$ using colors from $C_{61}$. Since $C_{61}\cap \phi(A_3)=\emptyset$,
$A_3$ can be colored in an arbitrary way. By \autoref{lem:7-bracelet with one pair}, it is possible to assign colors
from $\phi(A_1)$ and $\phi(A_6)$ to vertices in $A_1$ and $A_6$ such that vertices in $A_1^+$ are colored with colors from
$C_{61}$ and the resulting coloring is a proper coloring of $G$. So $|A_1^+|> k$. By symmetry, $|A_1^-|> k$.

For every two vertices $x,y\in A_1^-$, $N_{A_6^+}(x)\subseteq N_{A_6^+}(y)$ or $N_{A_6^+}(y)\subseteq N_{A_6^+}(x)$, since $G$
is $C_4$-free.
Let $A_1^{--}\subseteq A_1^-$ be a set of $k$ vertices consisting of the largest $k$ vertices of $A_1^-$ (with respect to the neighborhood in $A_6^+$). Let $A_1^{++}\subseteq A_1^+$ be a set of $k$ vertices consisting of the largest $k$ vertices of $A_1^+$ (with respect to the neighborhood in $A_3^-$). Color $A_1^{--}$ using $k$ colors in $C_{13}$ and $A_1^{++}$ using $k$ colors in $C_{61}$. We now consider two cases.

\noindent {\bf Case 1.}  $|R_1^-|\le |A_6^0|$. In this case, color $R_1^-$ using colors in $C_{613}$ and these colors can also be used to color $|R_1^-|$ vertices in $A_6^0$.  Then color $A_6^+$ using the remaining colors in $\phi(A_6)$. It is easy to see that
the coloring of $A_6$ and the coloring of $A_1$ are proper.
Since in this coloring all colors used on $A_1^-$ belong to $\phi(A_1)\cap \phi(A_3)$,
it is possible to extend this coloring to a proper coloring of $G$ by \autoref{lem:7-bracelet with one pair}.

\noindent {\bf Case 2.}  $|R_1^-|> |A_6^0|$. By symmetry, $|R_1^+|> |A_3^0|$.
Since $|A_1^{--}|,|A_1^{++}|=k$ and $|A_1|=3k$,  $|R_1^-|+|R_1^+|+|A_1^0|=k$.
So
\[
	|A_6^0| < |R_1^-| \le k.
\]
This implies that $|A_6^+|>2k$ and so $|A_6^+|+|A_1^{--}|>3k$. By symmetry, $|A_3^-|>2k$ and $|A_3^-|+|A_1^{++}|>3k$.

We claim that for each $r\in R_1^-$, $r$ has a non-neighbor $f(r)$ in $A_6^+$. Moreover, we can choose $f$ to be injective.
We prove this by induction on $|R_1^-|$. 
The base case is $|R_1^-|=\{r_1\}$. If $r_1$ is complete to $A_6^+$, then $A_6^+\cup A_1^{--}\cup \{r_1\}$
is a clique of size larger than $3k$ by the definition of $A_1^{--}$. This contradicts that $\omega(G[A_6^+,A_1^-])\le 3k$. So $r_1$ has a non-neighbor $s_1$ in $A_6^+$. Now suppose that $|R_1^-|=i\ge 2$ and let $r_j$ be
the $j$th largest vertex in $R_1^-$ for $1\le j\le i$, and that we have found distinct non-neighbors $s_1,\ldots,s_{i-1}$ of $r_1,\ldots,r_{i-1}$. If $r_i$ is complete to $A_6^+\setminus \{s_1,\ldots,s_{i-1}\}$, then $(A_6^+\setminus \{s_1,\ldots,s_{i-1}\})\cup \{r_1,\ldots,r_i\}\cup A_1^{--}$ is a clique of $G[A_6^+\cup A_1^-]$ of size larger than $3k$.
This contradicts that $\omega(G[A_6^+\cup A_1^-])\le 3k$. So $r_i$ must have a non-neighbor $s_i\in A_6^+\setminus \{s_1,\ldots,s_{i-1}\}$. This proves the claim.
By symmetry, for each $r\in R_1^+$, $r$ has a non-neighbor $g(r)$ in $A_3^-$. Moreover, we can choose $g$ to be injective.

Now we can have a desired coloring as follows.

$\bullet$ Color $R_1^-,R_1^+,A_1^0$ using the $k$ colors in $C_{613}$ in any way.

$\bullet$ For each color $i$ assigned to some vertex $r\in R_1^-$, color a non-neighbor $f(r)\in A_6^+$ of $r$ with $i$, 
and color the remaining vertices in $A_6$ in any way.

$\bullet$ For each color $i$ assigned to some vertex $r\in R_1^+$, color a non-neighbor $g(r)\in A_3^-$ of $r$ with $i$, 
and color the remaining vertices in $A_3$ in any way.
\end{proof}

Now we are ready to prove \autoref{thm:7-bracelet}.
\begin{proof}[Proof of \autoref{thm:7-bracelet}]
Let $G$ be a 7-bracelet $(A_1,\ldots,A_7)$.
We prove by induction on $|G|$.
If $G$ has a strong stable set $S$, we are done by applying the inductive hypothesis to $G-S$. 
So we assume that $G$ has no strong stable set.
Let $K_{61}$, $K_{13}$ and $K_{72}$ be any maximum clique of $G[A_6^+\cup A_1^-]$, $G[A_1^+\cup A_3^-]$, and $G[A_7^+\cup A_2^-]$, respectively. Observe that the possible maximum cliques of $G$ are $A_i\cup A_{i+1}$, $K_{61}\cup A_7$, $K_{13}\cup A_2$ or $K_{72}\cup A_1$. 
We claim that
\begin{equation}\label{clm:equal size}
A_i\cup A_{i+1} \text{ is a maximum clique of } G \text{ for each } i.
\end{equation}

Since $G$ is $C_4$-free and each vertex in $A^+_6$ has a neighbor in $A_1^-$, there exits a vertex in $A_1^-$ that is complete to $A_6^+$. Let $a_1^-$ be such a vertex. Note that $a_1^-$ is universal in $G[A^+_6\cup A^-_1\cup A_7]$ and so is in every maximum clique of $G[A^+_6\cup A^-_1\cup A_7]$. The vertices $a_6^+,a_7^+,a_2^-,a_1^+,a_3^-$ can be defined similarly.
If $A_i\cup A_{i+1}$ is not maximum, we can find a strong stable set $S$ of $G$ as follows, which is a contradiction.
For $1\le i\le 7$, let $a_i\in A_i$.

$\bullet$ If $A_4\cup A_5$ is not a maximum clique, then $S=\{a_1,a_6^+,a_3^-\}$ is a desired set.

$\bullet$ If $A_5\cup A_6$ is not a maximum clique, then $S=\{a_4,a_2^-,a_7^+\}$ is a desired set. 
The case $A_4\cup A_3$ is symmetric.

$\bullet$ If $A_6\cup A_7$ is not a maximum clique, then $S=\{a_5,a_3^-,a_1^-\}$ is a desired set. 
The case $A_2\cup A_3$ is symmetric.

$\bullet$ If $A_1\cup A_7$ is not a maximum clique, then $S=\{a_4,a_2^-,a_6^+\}$ is a desired set. 
The case $A_2\cup A_1$ is symmetric.

This proves (\ref{clm:equal size}). Therefore, $|A_i|=x\ge 1$ for each $1\le i\le 7$ and 
$\omega(G[A_6^+\cup A_1^-]),\omega(G[A_1^+\cup A_3^-]),\omega(G[A_7^+\cup A_2^-])\le x$.
It now follows from \autoref{lem:7-bracelet} that $\chi(G)\le \lceil \frac{7}{6}\omega(G) \rceil$. 
\end{proof}

\section{Color Blowup of the Emerald}

Let $G$ be a blowup of the emerald. For each $i\in V(E)$ (see \autoref{fig:emerald}), 
let $L_i$ be the clique that substitutes the vertex $i$.
Define $p(G)=\min_{1\le i\le 11} |L_i|$. 
The overall strategy is to consider the blowups of a sequence of proper induced subgraphs (starting from $C_7$) of the emerald.
We prove the bounds on the blowups of the larger induced subgraph using the results on blowups of the smaller induced subgraph as the base case for induction. In the end, we are able to show that the $\lceil \frac{11}{9}\omega(G) \rceil$ bound holds 
when $p(G)\le 2$ (\autoref{lem:min size at most 2}) and $p(G)\ge 3$ (\autoref{lem:min size at least 3}), which together
imply \autoref{thm:emerald}.

We start with a famous lemma due to Lov\'asz.

\begin{lemma}[\cite{Lovasz72}]\label{lem:blowup of perfect graphs}
Any blowup of a perfect graph is still a perfect graph.
\end{lemma}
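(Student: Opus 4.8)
The plan is to reduce the statement to a single local operation and then show that operation preserves perfection. Under the paper's definition, a blowup substitutes a clique $K_{s_i}$ for each vertex $i$ of the base graph and allows $s_i=0$ (deletion). Deleting a vertex only passes to an induced subgraph, which trivially preserves perfection, so I may first delete all size-$0$ parts and assume every remaining $s_i\ge 1$. Each clique $K_{s_i}$ is built up from a single vertex by repeatedly adding a \emph{true twin}: a new vertex $v'$ adjacent to $v$ and to every neighbour of $v$, so that $N[v']=N[v]\cup\{v'\}$. Hence any blowup is obtained from a perfect graph by a finite sequence of true-twin duplications, and the whole lemma follows once I prove the single claim: \emph{if $G$ is perfect and $G'$ is obtained from $G$ by duplicating a vertex $v$ into a true twin $v'$, then $G'$ is perfect.}

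I would prove this twin-duplication claim by induction on $|V(G)|$. The first step is the standard heredity reduction: it suffices to show $\chi(G')=\omega(G')$, because every proper induced subgraph $F$ of $G'$ is already perfect. Indeed, if $F$ omits $v'$ then $F$ is an induced subgraph of $G$; if $F$ contains $v'$ but not $v$, then relabelling $v'$ as $v$ identifies $F$ with an induced subgraph of $G$; and if $F$ contains both $v$ and $v'$ but is proper, then $F$ is itself a true-twin duplication of $v$ inside the smaller perfect graph $G[V(F)\setminus\{v'\}]$, so $F$ is perfect by the inductive hypothesis. In every case $\chi(F)=\omega(F)$, so only the equality $\chi(G')=\omega(G')$ remains.

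For that equality I would avoid the naive route of colouring $G$ first and then fitting in $v'$, and instead delete a well-chosen stable set. Fix an optimal colouring of $G$ with $\omega(G)$ colours and let $X$ be the colour class containing $v$. The key perfect-graph fact is that every maximum clique of $G$ meets every colour class of an optimal colouring, since a clique of size $\omega(G)$ must use all $\omega(G)$ colours; thus $X$ is strong and $\omega(G-X)=\omega(G)-1$. Now $G'-X$ is a proper induced subgraph of $G'$, hence perfect by the reduction above, so $\chi(G'-X)=\omega(G'-X)$. A short case analysis bounds this clique number: a clique of $G'-X$ avoiding $v'$ lies in $G-X$ and so has size at most $\omega(G)-1$, while a clique containing $v'$ has its remaining vertices inside $N_G(v)$ (as $X\cap N_G(v)=\emptyset$), so restoring $v$ gives a clique of $G$ through $v$ and the original clique has size at most $\omega_v(G)$, where $\omega_v(G)$ denotes the largest clique of $G$ through $v$. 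Hence $\omega(G'-X)\le\max\{\omega(G)-1,\ \omega_v(G)\}=\omega(G')-1$, since $\omega(G')=\max\{\omega(G),\ \omega_v(G)+1\}$. Colouring $G'-X$ optimally and spending one fresh colour on $X$ then gives $\chi(G')\le\chi(G'-X)+1\le\omega(G')$, and the reverse inequality is automatic.

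The step I expect to be the main obstacle is precisely this last colouring argument. The tempting approach---take an optimal colouring of $G$ and give $v'$ a colour missing from $N[v]$---can fail, because an arbitrary optimal colouring may already exhaust all $\omega(G)$ colours on $N[v]$, leaving none for the twin. The device that rescues the argument is to peel off the strong stable set $X$ first: perfection guarantees that removing $X$ lowers $\omega$ by exactly one, so the extra colour spent on $X$ is paid back exactly, and the clique-number bookkeeping for $G'-X$---where the interaction between $\omega(G)$ and $\omega_v(G)$ is controlled---closes the induction.
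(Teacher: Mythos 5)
Your proof is correct. Note that the paper itself gives no proof of this lemma at all: it is stated with a citation to Lov\'asz's 1972 paper (the classical \emph{replication lemma}) and used as a black box, so there is no internal argument to compare against. What you have written is essentially the standard literature proof: reduce a blowup (after discarding the size-$0$ bags, which is just passing to an induced subgraph) to a sequence of true-twin duplications, then show duplication preserves perfection by induction. Your treatment of the key colouring step is a slightly cleaner variant of the textbook one: the classical proof splits into two cases according to whether $v$ lies in a maximum clique of $G$ (adding a colour when it does, deleting $X\setminus\{v\}$ when it does not), whereas you delete the entire colour class $X$ containing $v$ and verify uniformly that $\omega(G'-X)\le\max\{\omega(G)-1,\ \omega_v(G)\}=\omega(G')-1$, which handles both cases at once. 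All the supporting computations check out: $X$ remains stable in $G'$ since $v'\notin X$; a clique of $G'-X$ through $v'$ transfers to a clique of $G$ through $v$ because $X\cap N_G(v)=\emptyset$; and the identity $\omega(G')=\max\{\omega(G),\ \omega_v(G)+1\}$ is exactly what makes the bookkeeping close. So the proposal is a complete and correct proof of the cited result, marginally slicker than the standard two-case argument.
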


We now determine the chromatic number of equal-size blowups of $C_7$.
\begin{lemma}[Equal-size blowup of $C_7$]\label{lem:equal blowup of C7}
For any integer $t\ge 1$, $\chi(C_7[K_t]) = \lceil \frac{7}{6}\omega(C_7[K_t]) \rceil$.
\end{lemma}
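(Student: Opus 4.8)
The plan is to prove the two matching inequalities separately. First I would record the two relevant invariants of the blowup. A maximum clique of $C_7[K_t]$ is the union of two consecutive blown-up cliques $L_i\cup L_{i+1}$, so $\omega(C_7[K_t])=2t$ and the target value is $\lceil\frac{7}{6}\cdot 2t\rceil=\lceil\frac{7t}{3}\rceil$; abbreviate $N:=\lceil\frac{7t}{3}\rceil$. For the lower bound, note that any independent set meets each clique $L_i$ in at most one vertex and that its index set is independent in $C_7$, whence $\alpha(C_7[K_t])=\alpha(C_7)=3$. The standard inequality $\chi\ge |V|/\alpha$ then gives $\chi(C_7[K_t])\ge 7t/3$, and integrality upgrades this to $\chi(C_7[K_t])\ge N$.

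The substance is the matching upper bound. The key reformulation is that a proper $N$-coloring of $C_7[K_t]$ is exactly an assignment of a $t$-element color set $S_i\subseteq\{0,\ldots,N-1\}$ to each index $i$ with $S_i\cap S_{i+1}=\emptyset$ for all $i$ (indices mod $7$): each $L_i$ is a clique needing $t$ distinct colors, consecutive $L_i,L_{i+1}$ are complete, and non-consecutive cliques are anticomplete. I would realize the $S_i$ as arcs of $t$ consecutive colors on the cyclic palette $\mathbb{Z}_N$: fix starting points $a_1=0$ and $a_{i+1}=a_i+\delta_i \pmod N$, and set $S_i=\{a_i,a_i+1,\ldots,a_i+t-1\}\pmod N$. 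A short check shows that two arcs of length $t$ whose starts differ by $\delta$ are disjoint on $\mathbb{Z}_N$ precisely when $t\le\delta\le N-t$, so it suffices to choose gaps $\delta_1,\ldots,\delta_7\in[t,N-t]$ with $\sum_{i}\delta_i\equiv 0\pmod N$, which makes the construction close up (so that the wrap-around pair $S_7,S_1$ is also disjoint).

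The hard part — the only place where the ceiling really bites — is making the arcs close up around the cycle when $3\nmid t$. I would use winding number $3$, i.e. aim for $\sum_i\delta_i=3N$, and take each $\delta_i\in\{t,t+1\}$ with exactly $s:=3N-7t$ of them equal to $t+1$. A residue computation shows $s\in\{0,1,2\}$ (namely $s=0,2,1$ according as $t\equiv 0,1,2\pmod 3$), so $0\le s\le 7$ and such a choice exists, and then $\sum_i\delta_i=7t+s=3N$ as required. It remains to verify the interval condition $t\le\delta_i\le N-t$: the left inequality is immediate, and the right reduces to $N\ge 2t+1$, which follows from $\frac{7t}{3}\ge 2t+\frac13$. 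Thus every gap is admissible, all consecutive arcs (including the wrap-around pair) are disjoint, and assigning the $t$ colors of $S_i$ to the $t$ vertices of $L_i$ yields a proper coloring with $N=\lceil\frac{7t}{3}\rceil$ colors, matching the lower bound and giving the equality.
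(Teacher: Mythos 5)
Your proposal is correct, and at heart it is the same construction as the paper's: the paper also colors each $L_i$ with an arc of $t$ cyclically consecutive colors from a palette of $N=\lceil 7t/3\rceil$ colors, namely the colors $(i-1)t+1,\ldots,(i-1)t+t$ taken modulo $N$, and then declares the verification routine. The difference is that the paper fixes every gap between consecutive starting points to be exactly $t$, so the ``excess'' $3N-7t$ is dumped entirely into the single wrap-around gap between $L_7$ and $L_1$, whereas you distribute it across the seven gaps by taking each $\delta_i\in\{t,t+1\}$. This extra care is not wasted: your disjointness criterion $t\le\delta\le N-t$ applied to the paper's coloring shows the wrap-around gap equals $3N-6t$, which for $t\equiv 1\pmod 3$ lies in the required interval only when $t\ge 4$; at $t=1$ the paper's recipe assigns colors $1,2,3,1,2,3,1$ to $L_1,\ldots,L_7$, giving the adjacent cliques $L_1$ and $L_7$ the same color, so the coloring exhibited in the paper is not proper in that one case (the lemma itself is of course still true, as $C_7$ is $3$-chromatic via $1,2,1,2,1,2,3$). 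Your version, with the closure condition $\sum_i\delta_i=3N$ and the residue check $s=3N-7t\in\{0,1,2\}$, handles all $t\ge 1$ uniformly, and your lower bound via $\chi\ge |V|/\alpha=7t/3$ makes explicit what the paper merely asserts. So: same approach, but your bookkeeping is genuinely tighter and repairs a small defect in the paper's argument.
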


\begin{proof}
Let $G=C_7[K_t]$ where $t\ge 1$. Note first that $\chi(G)\ge \lceil \frac{7t}{3} \rceil = \lceil \frac{7}{6}\omega(G) \rceil$.
Next we show that $\chi(G)\le \lceil \frac{7t}{3} \rceil$.
Let $L_i$ be the clique that substitutes the vertex $i\in V(C_7)$ for $1\le i\le 7$.
Consider the following coloring $\phi$ of $G$: color vertices in $L_i$ using colors $(i-1)t+1,(i-1)t+2,\ldots,(i-1)t+t$ for each $i$,
where colors are taken modulo $\lceil \frac{7t}{3} \rceil$. 
It is routine to verify that $\phi$ is a proper coloring of $G$.
\end{proof}

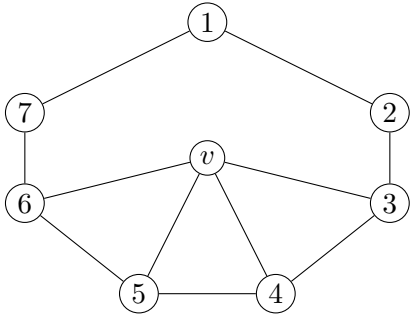
\begin{figure}[tb]
 \centering
 \begin{tikzpicture}[scale=0.6]
  \tikzstyle{vertex}=[draw, circle, fill=white!100, minimum width=4pt,inner sep=2pt]
  
  \node[vertex] (v1) at (0,3) {1};
  \node[vertex] (v2) at (4,1) {2};
  \node[vertex] (v3) at (4,-1) {3};
  \node[vertex] (v4) at (1.5,-3) {4};
  \node[vertex] (v5) at (-1.5,-3) {5};
  \node[vertex] (v6) at (-4,-1) {6};
  \node[vertex] (v7) at (-4,1) {7};
  \draw (v1)--(v2)--(v3)--(v4)--(v5)--(v6)--(v7)--(v1);
 
  \node[vertex] (v0) at (0,0) {$v$};
  \draw (v0)--(v3) (v0)--(v4) (v0)--(v5) (v0)--(v6);
 \end{tikzpicture}
 \caption{The graph $C_7+v$.}\label{fig:C7+v}
\end{figure}
 
\begin{lemma}[Blowup of $C_7+v$]\label{lem:blowup of C7+v}
If $G$ is a blowup of $C_7+v$ shown in \autoref{fig:C7+v}, then $\chi(G)\le \lceil \frac{7}{6}\omega(G) \rceil$.
\end{lemma}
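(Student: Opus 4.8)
The plan is to induct on $|G|$, first peeling off a strong stable set and then reducing everything to a rigid weighted structure that can be colored on top of the equal-size blowup of $C_7$ from \autoref{lem:equal blowup of C7}. Write $a_i=|L_i|$ for the clique substituting the cycle vertex $i\in\{1,\dots,7\}$ and $a_v=|L_v|$ for the one substituting $v$, and set $\omega=\omega(G)$. The standard opening move is the strong stable set reduction: if $G$ has a strong stable set $S$, then $S$ meets each clique $L_i$ (and $L_v$) in at most one vertex, so $G-S$ is again a blowup of $C_7+v$ with $\omega(G-S)=\omega-1$; the inductive hypothesis then gives $\chi(G-S)\le\lceil\frac76(\omega-1)\rceil$, whence $\chi(G)\le\chi(G-S)+1\le\lceil\frac{7\omega-1}{6}\rceil\le\lceil\frac76\omega\rceil$. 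So I may assume $G$ has no strong stable set.

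The structural fact I would exploit is that $C_7+v$ is \emph{almost} perfect: one checks that its only odd hole is the outer $C_7$ and that it has no odd antihole, so deleting any single cycle vertex $i$ destroys the unique obstruction and yields a perfect graph $(C_7+v)-i$. Consequently, if some $L_i=\emptyset$ with $i\in\{1,\dots,7\}$, then $G$ is a blowup of a perfect graph, hence perfect by \autoref{lem:blowup of perfect graphs}, and $\chi(G)=\omega\le\lceil\frac76\omega\rceil$. Thus the genuine case is the one in which all seven cycle cliques are nonempty.

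In that case I would show that the absence of a strong stable set forces the weights to be rigid. A maximum clique of $G$ is a full union $\bigcup_{i\in K}L_i$ over a maximal clique $K$ of $C_7+v$ of largest weight, and the candidate supports are the seven cliques $\{i,i+1\}$ and $\{v,i,i+1\}$. For each candidate support $K_0$ I would exhibit a three-element stable set $S$, formed by choosing one vertex from each clique over an independent set of $C_7+v$, that meets every maximum clique except possibly the one supported on $K_0$; for instance one vertex from each of $L_2,L_4,L_6$ leaves only $L_7\cup L_1$ uncovered. Since $G$ has no strong stable set, each such $S$ must fail to be strong, which pins the corresponding clique down as maximum. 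Running through all seven supports yields $a_1+a_2=a_2+a_3=a_v+a_3+a_4=a_v+a_4+a_5=a_v+a_5+a_6=a_6+a_7=a_7+a_1=\omega$, and solving gives $a_1=a_3=a_4=a_5=a_6=p$, $a_2=a_7=q$, and $a_v=q-p$ for some integers $q\ge p\ge1$; the degenerate subcase $a_v=0$ simply collapses to $G=C_7[K_p]$, handled directly by \autoref{lem:equal blowup of C7}.

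Finally I would color this rigid blowup. Choose a sub-clique of size $p$ inside each $L_i$; the seven chosen sub-cliques induce a copy of $C_7[K_p]$, which \autoref{lem:equal blowup of C7} colors with $\lceil\frac{7p}{3}\rceil$ colors. The leftover vertices are precisely $q-p$ extra vertices in each of $L_2$ and $L_7$ together with all of $L_v$; since the vertices $2,7,v$ are pairwise nonadjacent in $C_7+v$, these three leftover cliques are pairwise anticomplete, so a single palette of $q-p$ \emph{fresh} colors colors all three simultaneously with no conflict, and these fresh colors automatically avoid the neighbors of each leftover clique, which all lie in the already-colored core. The total is $\lceil\frac{7p}{3}\rceil+(q-p)=\lceil\frac{4p}{3}+q\rceil\le\lceil\frac{7(p+q)}{6}\rceil=\lceil\frac76\omega\rceil$, the last inequality holding because $q\ge p$. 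I expect the main obstacle to be the middle step rather than the coloring: verifying that ``no strong stable set'' forces the weights so rigidly is bookkeeping-heavy, requiring all seven forcing stable sets and the observation that a maximum clique, being a full union of cliques, is met by a stable set as soon as its support is hit. Once the weights are known, the coloring is short, its one real idea being that the three leftover cliques are mutually anticomplete and may therefore share a common fresh palette.
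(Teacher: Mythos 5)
Your proof is correct, but it takes a genuinely different (and heavier) route than the paper's. The paper's argument for this lemma is a short direct decomposition with no induction and no strong stable sets: set $t=\min_{1\le i\le 7}|L_i|$, peel off a set $L'_i\subseteq L_i$ of size $t$ from each cycle clique to form $H\cong C_7[K_t]$, and observe that $G-H$ is perfect (the minimum cycle clique empties out, and a blowup of $(C_7+v)-i$ is perfect by \autoref{lem:blowup of perfect graphs}) with $\omega(G-H)\le\omega(G)-2t$, since every maximum clique of $G$ contains two consecutive cycle cliques; then $\chi(G)\le(\omega(G)-2t)+\lceil\frac{7}{3}t\rceil=\omega(G)+\lceil\frac{t}{3}\rceil\le\lceil\frac{7}{6}\omega(G)\rceil$ because $t\le\frac{\omega(G)}{2}$. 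You instead import the induction/no-strong-stable-set/rigidity machinery that the paper reserves for the later, genuinely harder lemmas (the blowups of $C_7+2t$ and of $E-8$), where the remainder after peeling is not automatically perfect and the clique sizes must be pinned down. For the present lemma that machinery is sound: your seven pinning stable sets exist (and, importantly, can all be chosen among cycle vertices, so they avoid the possibly empty $L_v$ --- a point worth making explicit), they force $a_1=a_3=a_4=a_5=a_6=p$, $a_2=a_7=q$, $a_v=q-p$, and your final count $\lceil\frac{7p}{3}\rceil+(q-p)=\lceil\frac{4p}{3}+q\rceil\le\lceil\frac{7}{6}(p+q)\rceil$ is right since $p\le q$. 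But all of this is unnecessary here: perfection of the remainder after peeling the minimum-size core makes the weight analysis irrelevant, whatever the sizes are. What your approach buys is uniformity --- it is exactly the template the paper uses for the subsequent blowup lemmas, so nothing is conceptually wasted; what the paper's approach buys is brevity, exploiting the special feature of $C_7+v$ that deleting any cycle vertex leaves a perfect graph.
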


\begin{proof}
Let $G$ be a blowup of $C_7+v$ shown in \autoref{fig:C7+v}. Suppose that $L_u$ is the clique that substitutes
the vertex $u\in V(C_7+v)$. If $L_i=\emptyset$ for some $i\in \{1,2,\ldots,7\}$, then $G$ is perfect and $\chi(G)=\omega(G)$.
Therefore, we may assume that $L_i\neq \emptyset$ for each $i\in \{1,2,\ldots,7\}$.
Hence, $t:=\min_{1\le i\le 7} |L_i| \ge 1$.
For each $i\in \{1,2,\ldots,7\}$, let $L'_i\subseteq L_i$ be a set of size $t$.
Denote by $H$ the subgraph of $G$ induced by $\bigcup_{i=1}^7 L'_i$. 
Note that $G-V(H)$ is perfect and $\omega(G-H)\le \omega(G)-2t$.
It follows from \autoref{lem:equal blowup of C7} that
\begin{align*}
	\chi(G) & \le \chi(G-H)+\chi(H)\\
				& \le (\omega(G)-2t) + \lceil \frac{7}{3} t \rceil\\
				& =\omega(G)+ \lceil \frac{t}{3} \rceil.
\end{align*}
Since $t\le \frac{\omega(G)}{2}$, $\chi(G)\le \lceil \frac{7}{6}\omega(G) \rceil$.
\end{proof}

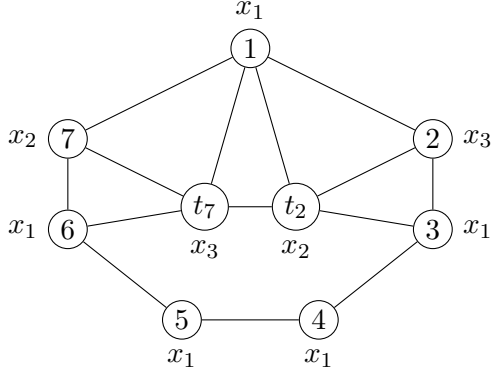
\begin{figure}[tb]
 \centering
 \begin{tikzpicture}[scale=0.6]
  \tikzstyle{vertex}=[draw, circle, fill=white!100, minimum width=4pt,inner sep=2pt]
  
  \node[vertex,label=above:$x_1$] (v1) at (0,3) {1};
  \node[vertex,label=right:$x_3$] (v2) at (4,1) {2};
  \node[vertex,label=right:$x_1$] (v3) at (4,-1) {3};
  \node[vertex,label=below:$x_1$] (v4) at (1.5,-3) {4};
  \node[vertex,label=below:$x_1$] (v5) at (-1.5,-3) {5};
  \node[vertex,label=left:$x_1$] (v6) at (-4,-1) {6};
  \node[vertex,label=left:$x_2$] (v7) at (-4,1) {7};
  \draw (v1)--(v2)--(v3)--(v4)--(v5)--(v6)--(v7)--(v1);
 
  \node[vertex,label=below:$x_3$] (s1) at (-1,-.5) {$t_7$};
  \node[vertex,label=below:$x_2$] (s2) at (1,-.5) {$t_2$};
  \draw (s1)--(v7) (s1)--(v6) (s1)--(v1) ;
  \draw (s2)--(v3) (s2)--(v2) (s2)--(v1) ;
  \draw (s1)--(s2);
 \end{tikzpicture}
 \caption{The graph $C_7+2t$. The label outside a vertex $u$ represents $|L_u|$.}
 \label{fig:C7+two 3-vertices}
\end{figure}

\begin{lemma}[Blowup of $C_7+2t$]\label{lem:blowup of C7+2v}
If $G$ is a blowup of $C_7+2t$ shown in \autoref{fig:C7+two 3-vertices}, then $\chi(G)\le \lceil \frac{7}{6}\omega(G) \rceil$.
\end{lemma}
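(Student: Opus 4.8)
The graph $C_7+2t$ has a $C_7$ with vertices $1,\ldots,7$, plus two extra vertices $t_7$ (adjacent to $6,7,1$) and $t_2$ (adjacent to $1,2,3$), with $t_7t_2$ an edge. The blowup sizes are prescribed as $|L_1|=|L_3|=|L_4|=|L_5|=|L_6|=x_1$, $|L_2|=|L_{t_7}|=x_3$, $|L_7|=|L_{t_2}|=x_2$. So I need to figure out what $\omega(G)$ is in terms of $x_1,x_2,x_3$, then exhibit a coloring.

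**Computing the clique number.** The maximal cliques come from edges of $C_7+2t$. Let me find the heavy ones: the triangle $\{1,2,t_2\}$ has weight $x_1+x_3+x_2$; the triangle $\{6,7,t_7\}$ has weight $x_1+x_2+x_3$; the triangle $\{1,t_7,t_2\}$ (if $1$ is adjacent to both and $t_7t_2$ is an edge) has weight $x_1+x_3+x_2$; the edge $\{2,3\}$ gives $x_3+x_1$; the edge $\{7,1\}$ gives $x_2+x_1$; etc. So the clique number should be $\omega(G)=x_1+x_2+x_3$ (assuming this dominates the pure $C_7$ edges $2x_1$, which it does once all sizes are positive).

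I would organize the plan as follows.

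\medskip

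\textbf{The plan.} My plan is to reduce to the already-proved \autoref{lem:blowup of C7+v} by peeling off a perfect (or otherwise cheaply colorable) induced subgraph, in the same spirit as the proof of that lemma. First I would determine $\omega(G)$ exactly: by checking the triangles through $t_2$ and $t_7$ I expect $\omega(G)=x_1+x_2+x_3$, with the $C_7$-edges and the edge $t_7t_2$ giving no larger clique once I verify $x_1+x_2+x_3 \ge 2x_1$ and $\ge x_2+x_3+x_1$. Then I would aim to show $\chi(G)\le \lceil \frac{7}{6}\omega(G)\rceil = \lceil \frac{7}{6}(x_1+x_2+x_3)\rceil$.

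\textbf{Key steps.} The idea is that $t_7$ and $t_2$ together behave like the single apex vertex $v$ of $C_7+v$, but split across two non-adjacent positions of the cycle (one seeing $\{6,7,1\}$, the other seeing $\{1,2,3\}$). I would first handle the $C_7$-part. Let $t=\min(x_1,x_2,x_3)$; inside each $L_i$ select a subset $L_i'$ of size $t$, take $H$ to be the induced equal-size blowup $C_7[K_t]$ on $\bigcup L_i'$ together with appropriately chosen $t$-sized slices of $L_{t_7}, L_{t_2}$, and color $H$ using \autoref{lem:equal blowup of C7} (or a small ad hoc coloring accounting for $t_7,t_2$). The remaining graph $G-H$ should be perfect — deleting a uniform slice of the cycle typically destroys the induced $C_7$ — so by \autoref{lem:blowup of perfect graphs} it uses $\omega(G-H)$ colors, and I would bound $\omega(G-H)\le \omega(G)-2t$. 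Combining as in \autoref{lem:blowup of C7+v} gives $\chi(G)\le (\omega(G)-2t)+\lceil\frac{7}{3}t\rceil = \omega(G)+\lceil\frac{t}{3}\rceil$, and since $t\le \frac{1}{3}\omega(G)$ here (as $t=\min$ of three sizes summing to $\omega(G)$), this yields $\omega(G)+\lceil\frac{\omega(G)}{9}\rceil$, which is at most $\lceil\frac{7}{6}\omega(G)\rceil$ — actually stronger. If the arithmetic is too tight I would instead follow the explicit modular coloring template used for $C_7[K_t]$, assigning $L_i$ the color block $(i-1)x_1+1,\ldots$ cyclically and slotting $t_7,t_2$ into gaps left unused by their neighbors.

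\textbf{The main obstacle.} The hard part will be the two apex vertices $t_7$ and $t_2$: unlike the single $v$ in $C_7+v$, they are mutually adjacent and attach to overlapping neighborhoods (both see vertex $1$), so I cannot simply reuse colors symmetrically, and the slice I remove to form $H$ must simultaneously reduce the clique number on \emph{both} heavy triangles $\{1,2,t_2\}$ and $\{6,7,t_7\}$ by the full amount $2t$. I expect the delicate point is verifying $\omega(G-H)\le\omega(G)-2t$, i.e.\ that removing a $t$-slice really lowers every maximum clique — in particular the triangles through $t_2$ and $t_7$ — which forces a careful choice of which vertices enter $H$ (I must delete $t$ vertices from the $L_i'$ touching each heavy triangle). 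Once that combinatorial bookkeeping is pinned down, the chromatic bound follows mechanically from the two cited lemmas.
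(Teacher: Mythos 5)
There are genuine gaps here, and the decomposition you propose cannot be repaired into a proof. Your central structural claim is false: after removing the uniform slice $H$, the remainder $G-H$ is in general not perfect. The pattern $C_7+2t$ contains \emph{three} induced $7$-cycles, namely the outer one, the cycle $1,t_2,3,4,5,6,7$, and the cycle $1,2,3,4,5,6,t_7$; a uniform slice only empties the classes attaining the minimum, so if, say, $x_2<\min(x_1,x_3)$, then after deleting $H$ the classes $L_1,\ldots,L_6,L_{t_7}$ are all still nonempty and the third cycle survives as an induced $C_7$. Your bound on $H$ is also false: since $\{1,t_7,t_2\}$ is a triangle of the pattern, the slice $H$ (which you take to include $t$-slices of $L_{t_7}$ and $L_{t_2}$) has $\omega(H)=3t$, so $\chi(H)\ge 3t$ rather than $\lceil\frac{7}{3}t\rceil$; \autoref{lem:equal blowup of C7} does not apply to it. These errors kill the arithmetic even in instances where the remainder happens to be perfect: take $x_1=6$, $x_2=x_3=3$ (a legitimate instance --- these are exactly the sizes forced in the paper's proof, with $x_1=x_2+x_3$ and $\omega(G)=12$). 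Then $t=3$, $\chi(H)\ge\omega(H)=9$, the remainder is perfect with clique number $6$, and your scheme needs at least $9+6=15$ colors against the target $\lceil\frac{7}{6}\cdot 12\rceil=14$. Two further slips: $\omega(G)=\max(2x_1,\,x_1+x_2+x_3)$, not always $x_1+x_2+x_3$; and the lemma must be proved for \emph{arbitrary} blowups of $C_7+2t$ (that is how it is invoked in the proof of \autoref{lem:emerald-2v}) --- the sizes in \autoref{fig:C7+two 3-vertices} are a conclusion derived inside the paper's proof, not a hypothesis you may assume.

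What is missing are the two ideas the paper's proof runs on. First, an induction on $|G|$ with a size-forcing step: for each maximal clique $K$ of the nine-vertex pattern there is a stable set meeting every maximal clique except $K$, so either some blown-up maximal clique is not a maximum clique of $G$ (remove that stable set, the clique number drops, and induction finishes), or all of them are maximum, which forces the sizes of \autoref{fig:C7+two 3-vertices} with $x_1=x_2+x_3$, hence $\omega(G)=2x_1$. Second, in that forced situation, a split of $G$ into \emph{two} vertex-disjoint equal-size $C_7$-blowups rather than into one $C_7$-blowup plus a perfect graph: taking $x_2$-slices along the cycle $1,t_2,3,4,5,6,7$ yields $C_7[K_{x_2}]$, and what remains is precisely $C_7[K_{x_3}]$ on the cycle $1,2,3,4,5,6,t_7$. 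Coloring the two pieces by \autoref{lem:equal blowup of C7} with disjoint palettes uses $\lceil\frac{7}{3}x_2\rceil+\lceil\frac{7}{3}x_3\rceil$ colors (in the example above, $7+7=14$), which meets the bound except for a mod-$3$ boundary case that the paper then handles by peeling off a $C_7[K_3]$ and applying the induction once more. Your plan contains neither mechanism, and without them a slice-and-recurse argument cannot reach $\lceil\frac{7}{6}\omega(G)\rceil$.
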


\begin{proof}
We prove by induction on $|G|$.
Suppose that $L_u$ is the clique that substitutes the vertex $u\in V(C_7+2t)$. If $L_u=\emptyset$ for some $u\in V(C_7+2t)$, then either $G$ is perfect or $G$ is a blowup of $C_7+v$. Hence, we are done by \autoref{lem:blowup of C7+v}.
Therefore, we may assume that $L_u\neq \emptyset$ for each $u\in V(C_7+2t)$.
For every maximal clique $K$ of $C_7+2t$, there is an stable set $S_K$ such that $S_K$ meets every maximal clique of $C_7+2t$
except $K$. This implies that if $\bigcup_{u\in K} L_u$ is not a maximum clique of $G$ for some maximal clique $K$ of $C_7+2t$,
then $\omega(G-S_K)\le \omega(G)-1$. We are then done by applying the inductive hypothesis to $G-S_K$.
Therefore, $\bigcup_{u\in K} L_u$ is a maximum clique of $G$ for every maximal clique of $C_7+2t$. 
Hence, the sizes of $L_u$ must be the ones shown in \autoref{fig:C7+two 3-vertices}, where $x_1,x_2,x_3$ are positive integers
with $x_1=x_2+x_3$. Note that $\omega(G)=2(x_2+x_3)$.

For each clique $L_u$ with $|L_u|=x_1$ or $|L_u|=x_2$, let $L'_u\subseteq L_u$ be a set of size $x_2$.
Denote by $H$ the subgraph of $G$ induced by the union of those $L'_u$. Since $x_1=x_2+x_3$,
$H=C_7[K_{x_2}]$ and $G-H=C_7[K_{x_3}]$.
It follows from \autoref{lem:equal blowup of C7} that,
\begin{align*}
	\chi(G) & \le \chi(H)+\chi(G-H)\\
			    & = \lceil \frac{7}{6}(2x_2) \rceil + \lceil \frac{7}{6}(2x_3) \rceil\\
			    & \le \lceil \frac{7}{6}\omega(G) \rceil,
\end{align*}
unless $x_2,x_3\equiv 1 \pmod 3$. If $x_2=x_3=1$, then it is easy to see that $\omega(G)=4$ and $\chi(G)=5=\lceil \frac{7}{6} \omega(G) \rceil$. By symmetry, we now assume that $x_2\equiv 1 \pmod 3$ and $x_2\ge 4$. For each $u\in V(C_7+2v)$ with $|L_u|\in \{x_2,x_1\}$,
let $L'_u\subseteq L_u$ of size 3. Denote by $H$ the subgraph of $G$ induced by those $L'_u$. Then $H=C_7[K_3]$ and $\omega(G-H)\le \omega(G)-6$. By the inductive hypothesis and \autoref{lem:equal blowup of C7},
\begin{align*}
	\chi(G) & \le \chi(G-H) + \chi(H) \\
				& \le \lceil \frac{7}{6} (\omega(G)-6) \rceil + 7\\
				& \le  \lceil \frac{7}{6} \omega(G) \rceil.
\end{align*}
This completes the proof.
\end{proof}

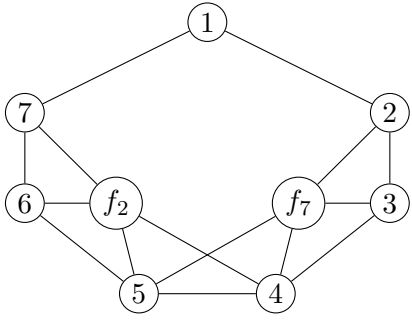
\begin{figure}[tb]
 \centering
 \begin{tikzpicture}[scale=0.6]
  \tikzstyle{vertex}=[draw, circle, fill=white!100, minimum width=4pt,inner sep=2pt]
  
  \node[vertex] (v1) at (0,3) {1};
  \node[vertex] (v2) at (4,1) {2};
  \node[vertex] (v3) at (4,-1) {3};
  \node[vertex] (v4) at (1.5,-3) {4};
  \node[vertex] (v5) at (-1.5,-3) {5};
  \node[vertex] (v6) at (-4,-1) {6};
  \node[vertex] (v7) at (-4,1) {7};
  \draw (v1)--(v2)--(v3)--(v4)--(v5)--(v6)--(v7)--(v1);
 
  \node[vertex] (s1) at (-2,-1) {$f_2$};
  \node[vertex] (s2) at (2,-1) {$f_7$};
  \draw (s1)--(v4) (s1)--(v5) (s1)--(v6) (s1)--(v7);
  \draw (s2)--(v3) (s2)--(v4) (s2)--(v5) (s2)--(v2);
 \end{tikzpicture}
 \caption{The graph $C_7+2f$.}\label{fig:C7+2f}
\end{figure}

\begin{lemma}[Blowup of $C_7+2f$]\label{lem:blowup of C_7+2f}
If $G$ is a blowup of $C_7+2f$ shown in \autoref{fig:C7+2f}, then $\chi(G)\le \lceil \frac{7}{6}\omega(G) \rceil$.
\end{lemma}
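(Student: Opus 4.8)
The plan is to follow the inductive template of \autoref{lem:blowup of C7+v} and \autoref{lem:blowup of C7+2v}, inducting on $|G|$ and writing $L_u$ for the clique substituting the vertex $u\in V(C_7+2f)$ (so that $L_{f_2}$ is complete to $L_4\cup L_5\cup L_6\cup L_7$ and $L_{f_7}$ is complete to $L_2\cup L_3\cup L_4\cup L_5$). The heart of the argument, however, is a structural observation special to $C_7+2f$: the three pairwise non-adjacent vertices $1,f_2,f_7$ form a stable set that meets \emph{every} maximal clique. This is exactly the feature that $C_7+2t$ lacks, and it makes the present lemma considerably easier than \autoref{lem:blowup of C7+2v}.

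First I would clear away the blowups in which some part is empty. If $L_{f_2}=\emptyset$ or $L_{f_7}=\emptyset$, then $G$ is a blowup of $C_7+f_7$ or of $C_7+f_2$; a rotation of the seven-cycle identifies each of these with the graph $C_7+v$ of \autoref{fig:C7+v}, so the bound follows from \autoref{lem:blowup of C7+v}. If instead $L_i=\emptyset$ for some cycle vertex $i\in\{1,\dots,7\}$, then $G$ is a blowup of $(C_7+2f)-i$, which is perfect (it is $(C_4,C_5)$-free and a short inspection of degrees shows it contains no induced $C_7$ or $\overline{C_7}$), whence $\chi(G)=\omega(G)\le\lceil \frac{7}{6}\omega(G)\rceil$. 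Thus I may assume $L_u\neq\emptyset$ for all $u$.

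The main step is then immediate. One checks that the maximal cliques of $C_7+2f$ are exactly $\{1,2\},\{1,7\},\{f_2,4,5\},\{f_2,5,6\},\{f_2,6,7\},\{f_7,2,3\},\{f_7,3,4\},\{f_7,4,5\}$, and that each of them contains precisely one of $1,f_2,f_7$. Choosing one vertex from each of $L_1,L_{f_2},L_{f_7}$ therefore produces a stable set $S$ of $G$ meeting every maximal clique, and hence every maximum clique, of $G$; so $S$ is strong and $\omega(G-S)=\omega(G)-1$. Since $G-S$ is again a blowup of $C_7+2f$ on fewer vertices, the inductive hypothesis gives $\chi(G-S)\le\lceil \frac{7}{6}(\omega(G)-1)\rceil$, and therefore $\chi(G)\le\chi(G-S)+1\le\lceil \frac{7}{6}(\omega(G)-1)\rceil+1$. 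A direct check shows $\lceil \frac{7}{6}(\omega-1)\rceil+1\le\lceil \frac{7}{6}\omega\rceil$ for every positive integer $\omega$ (with equality unless $\omega\equiv 1\pmod 6$), which closes the induction.

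I expect the only genuine content to be in spotting the transversal $\{1,f_2,f_7\}$ and confirming that it hits every maximal clique; once that is in hand the coloring is handled entirely by the strong-stable-set reduction, with no need for the size-equalization and two-color-class splitting required for $C_7+2t$. The mildly technical residue is the verification of the base cases, in particular that $(C_7+2f)-i$ is perfect for each cycle vertex $i$, which I would dispatch by the odd-hole/antihole check indicated above, noting that the degree-$4$ vertices $4,5,f_2,f_7$ obstruct any induced $C_7$ or $\overline{C_7}$.
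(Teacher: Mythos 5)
Your proof is correct, and it rests on exactly the same structural fact that drives the paper's argument: the three pairwise non-adjacent vertices $1,f_2,f_7$ of $C_7+2f$ form a stable set meeting every maximal clique (the two edges at $1$ and the six triangles through $f_2$ or $f_7$, with each maximal clique containing precisely one of the three). The difference is only in execution. The paper does not induct: it sets $t=\min\{|L_1|,|L_{f_2}|,|L_{f_7}|\}$, peels $t$ vertices off each of these three parts in a single step (a disjoint union of three $t$-cliques, colored with $t$ colors), observes that the remainder has one of the three special parts emptied and is therefore perfect or a blowup of $C_7+v$, and closes with $\lceil \frac{7}{6}(\omega(G)-t)\rceil + t \le \lceil \frac{7}{6}\omega(G)\rceil$ via \autoref{lem:blowup of C7+v}. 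You instead iterate the $t=1$ step as an induction on $|G|$ using the strong-stable-set reduction (the same device the paper uses for \autoref{lem:blowup of C7+2v}), which amounts to applying the same numerical estimate $t$ times; your inequality $\lceil \frac{7}{6}(\omega-1)\rceil+1\le\lceil \frac{7}{6}\omega\rceil$ is valid. What the one-shot version buys is fewer degenerate cases: the paper only ever needs that blowups of $(C_7+2f)-1$ are perfect, whereas your base cases require $(C_7+2f)-i$ to be perfect for every cycle vertex $i$ --- true, and your odd-hole/antihole check goes through, but note the parenthetical about ``the degree-$4$ vertices $4,5,f_2,f_7$'' must be read per-case (e.g.\ $(C_7+2f)-4$ has no degree-$4$ vertices at all), so this costs several extra verifications. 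What your version buys is that the transversal property is stated and exploited explicitly, whereas in the paper it is hidden inside the unjustified-but-true assertion $\omega(G-G')\le\omega(G)-t$.
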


\begin{proof}
Suppose that $L_u$ is the clique that substitutes the vertex $u\in V(C_7+2f)$.
Let $t=\min \{|L_1|,|L_{f_2}|,|L_{f_7}|\}$. If $t=0$, then $G$ is either perfect or a blowup of $C_7+v$.
We are done by \autoref{lem:blowup of C7+v}.  Therefore, we assume that $t\ge 1$.
For each $u\in \{1,f_2,f_7\}$, let $L'_u\subseteq L_u$ be a set of size $t$.
Let $G'=G[L'_1\cup L'_{f_2}\cup L'_{f_7}]$. 
Note that $\omega(G-G')\le \omega(G)-t$. Moreover, $G-G'$ is either perfect or blowup of a $C_7+v$. 
By \autoref{lem:blowup of C7+v}, $\chi(G-G')\le \lceil \frac{7}{6}(\omega(G)-t) \rceil$.
It follows that
\begin{align*}
	\chi(G) & \le \chi(G-G') + \chi(G')\\ 
				& \le \lceil \frac{7}{6}(\omega(G)-t) \rceil + t \\
			   &  \le \lceil \frac{7}{6} \omega(G) \rceil.
\end{align*}
This completes the proof.
\end{proof}

\begin{lemma}[Blowup of emerald minus two vertices]\label{lem:emerald-2v}
If $G$ is a blowup of a proper induced subgraph of $H=E-8$ shown in \autoref{fig:emerald}, then $\chi(G)\le \lceil \frac{7}{6}\omega(G) \rceil$.
\end{lemma}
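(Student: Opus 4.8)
The plan is to exploit the fact that a blowup of a proper induced subgraph of $H$ is precisely a blowup of $H$ in which at least one clique $L_u$ is empty; equivalently, $G$ is a blowup of some single-vertex deletion $H-u$ with $u\in\{1,\dots,7,9,10,11\}$. Hence it suffices to bound $\chi$ for every blowup of each $H-u$. I would first record that the reflection $\sigma=(1\ 2)(3\ 6)(4\ 5)(7\ 9)(10\ 11)$ is an automorphism of $H$ (read off from \autoref{fig:emerald}), which collapses the work to the representatives $u\in\{1,3,4,7,10\}$.

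Two of these reduce directly to graphs already handled. For $u=7$, the seven vertices $1,6,11,4,10,9,2$ induce a $C_7$ (in this cyclic order), and the two remaining vertices $3$ and $5$ are each complete to four consecutive vertices of this cycle and are nonadjacent; thus $H-7\cong C_7+2f$ and \autoref{lem:blowup of C_7+2f} applies. For $u=3$, the vertices $11,7,1,2,9,10,4$ induce a $C_7$, and the remaining vertices $5,6$ are each complete to three consecutive cycle-vertices and adjacent to one another, so $H-3\cong C_7+2t$ and \autoref{lem:blowup of C7+2v} applies. In each case a blowup of $H-u$ is a blowup of the named graph, giving the bound.

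For $u\in\{1,10\}$ I would show $H-u$ is perfect, so that by \autoref{lem:blowup of perfect graphs} every blowup is perfect and $\chi=\omega\le\lceil\frac{7}{6}\omega\rceil$. Each $H-u$ is $(C_4,C_5)$-free as an induced subgraph of the emerald, so by the Strong Perfect Graph Theorem it suffices to rule out induced $C_7$ and larger odd holes and antiholes. This is short: in $H-1$ the vertices $2$ and $7$ each have both their neighbours adjacent and so lie in no induced cycle of length at least $4$; once they are discarded the vertices $9$ and $6$ acquire the same property, leaving only $\{3,4,5,10,11\}$, too few to contain an odd hole. A similar (slightly longer) check shows $H-10$ has no induced $C_7$ either. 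Odd antiholes are excluded in both graphs because $\overline{C_{2k+1}}$ for $k\ge 3$ has minimum degree at least $4$ and at least seven vertices, while $H-1$ and $H-10$ have at most five vertices of degree $4$.

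The crux is $u=4$. Here $H-4$ contains the induced $C_7$ on $1,2,3,10,5,11,7$ (so it is not perfect), with $6$ complete to the four consecutive vertices $\{1,5,11,7\}$ but $9$ complete only to the three consecutive vertices $\{2,3,10\}$; this matches none of the named graphs. The decisive observation is that $3$ and $9$ are true twins, as $N[3]=N[9]=\{2,3,9,10\}$. Consequently $L_3$ and $L_9$ are complete to each other and have identical external neighbourhoods, so $L_3\cup L_9$ is a single clique and $G$ is in fact a blowup of the graph $H'$ obtained from $H-4$ by identifying $3$ and $9$. One checks that $H'\cong C_7+v$, whence \autoref{lem:blowup of C7+v} finishes this case and the lemma. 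I expect this twin reduction to be the main obstacle: unlike the other deletions, $H-4$ is neither perfect nor isomorphic to any previously treated graph, and progress hinges on spotting the hidden true twins that contract it down to $C_7+v$.
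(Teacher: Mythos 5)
Your proposal is correct and takes essentially the same route as the paper's proof: reduce by the symmetry $(1\,2)(3\,6)(4\,5)(7\,9)(10\,11)$ to five single-vertex deletions of $H=E-8$, show two of them are perfect (so \autoref{lem:blowup of perfect graphs} applies), and identify the other three as (blowups of) $C_7+2f$, $C_7+2t$ and $C_7+v$, handled by \autoref{lem:blowup of C_7+2f}, \autoref{lem:blowup of C7+2v} and \autoref{lem:blowup of C7+v}. Your true-twin reduction for $H-4$ is in fact the careful version of the paper's corresponding step: the paper asserts that $H-5$ (your $H-4$ under the symmetry, with twins $6,7$ instead of your $3,9$) ``is isomorphic to'' $C_7+v$, which is literally off by one vertex, and your observation that it is a blowup of $C_7+v$ obtained by doubling one vertex is exactly what makes that step rigorous.
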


\begin{proof}
Note that there are five symmetric pairs $\{1,2\},\{3,6\},\{7,9\},\{10,11\},\{4,5\}$ in $H$.
Let $G$ be a blowup of $H-v$ where $v$ is a vertex in one of the five pairs.

$\bullet$ Observe that $H-1$ and $H-10$ are perfect. By \autoref{lem:blowup of perfect graphs}, any blowup of $H-1$ or $H-10$ is perfect. If $v\in \{1,2,10,11\}$, then $\chi(G)=\omega(G)$.

$\bullet$ Observe that $H-5$ is isomorphic to $C_7+v$ (see \autoref{fig:C7+v}). By \autoref{lem:blowup of C7+v}, 
$\chi(G)\le \lceil \frac{7}{6}\omega(G) \rceil$ if $v\in \{4,5\}$.

$\bullet$ Observe that $H-6$ is isomorphic to $C_7+2t$ (see \autoref{fig:C7+two 3-vertices}). By \autoref{lem:blowup of C7+2v},
$\chi(G)\le \lceil \frac{7}{6}\omega(G) \rceil$ if $v\in \{3,6\}$.

$\bullet$ Note that $H-7$ is isomorphic to $C_7+2f$ shown in \autoref{fig:C7+2f}. By \autoref{lem:blowup of C_7+2f},
 $\chi(G)\le \lceil \frac{7}{6}\omega(G) \rceil$ if $v\in \{7,9\}$.
\end{proof}

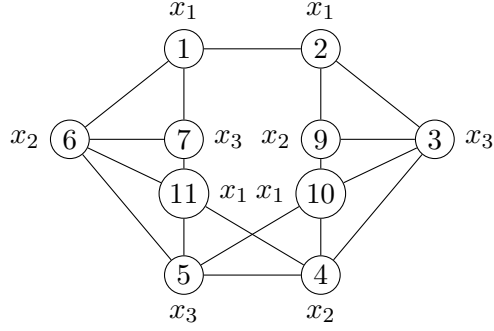
\begin{figure}[t]
	 \centering
	 \begin{tikzpicture}[scale=0.6]
	  \tikzstyle{vertex}=[draw, circle, fill=white!100, minimum width=4pt,inner sep=2pt]
	  
	  \node[vertex,label=above:$x_1$] (v1) at (-1.5,2) {1};
	  \node[vertex,label=above:$x_1$] (v2) at (1.5,2) {2};
	  \node[vertex,label=right:$x_3$] (v3) at (4,0) {3};
	  \node[vertex,label=below:$x_2$] (v4) at (1.5,-3) {4};
	  \node[vertex,label=below:$x_3$] (v5) at (-1.5,-3) {5};
	  \node[vertex,label=left:$x_2$] (v6) at (-4,0) {6};
	  \draw (v1)--(v2)--(v3)--(v4)--(v5)--(v6)--(v1);
	  
	  \node[vertex,label=right:$x_3$] (v14) at (-1.5,0) {7};
	  \draw (v14)--(v1) (v14)--(v6);
	  \node[vertex,label=left:$x_2$] (v25) at (1.5,0) {9};
	  \draw (v25)--(v2) (v25)--(v3);
	  \node[vertex,label=right:$x_1$] (s1) at (-1.5,-1.2) {11};
	  \draw (s1)--(v4) (s1)--(v5) (s1)--(v6) (s1)--(v14);
	  \node[vertex,label=left:$x_1$] (s2) at (1.5,-1.2) {10};
	  \draw (s2)--(v3) (s2)--(v4) (s2)--(v5) (s2)--(v25);
	 \end{tikzpicture}
	 \caption{The graph $E-8$. The label outside a vertex $u$ represents $|L_u|$.}
	 \label{fig:emerald-v}
	\end{figure}

\begin{lemma}[Blowup of the emerald minus one vertex]\label{lem:emerald-v}
If $G$ is a blowup of $E-8$ shown in \autoref{fig:emerald-v}, then $\chi(G)\le \lceil \frac{7}{6}\omega(G) \rceil$.
\end{lemma}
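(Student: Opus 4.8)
The plan is to induct on $|G|$, reduce to the structural heart with the same strong--stable--set bookkeeping used for \autoref{lem:blowup of C7+2v}, and then exhibit an explicit coloring built on top of the circular coloring of \autoref{lem:equal blowup of C7}. Writing $L_u$ for the clique substituting vertex $u$ of $E-8$, I would first dispose of the degenerate case: if some $L_u=\emptyset$ then $G$ is a blowup of a proper induced subgraph of $E-8$ and we are done by \autoref{lem:emerald-2v}; so assume every $L_u\neq\emptyset$. Next I would run the strong--stable--set reduction: for a maximal clique $K$ of $E-8$, if there is a stable set $S_K$ meeting every maximal clique except $K$, then whenever $\bigcup_{u\in K}L_u$ fails to be a maximum clique of $G$ one gets a strong stable set and finishes by induction on $G-S_K$ (using $\lceil\frac{7}{6}(\omega(G)-1)\rceil+1\le\lceil\frac{7}{6}\omega(G)\rceil$).

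The point I expect to matter most here --- and the first genuine obstacle --- is that \emph{not} every maximal clique of $E-8$ admits such an $S_K$. The nine maximal cliques are $\{1,2\},\{1,6,7\},\{2,3,9\},\{3,4,10\},\{3,9,10\},\{4,5,10\},\{4,5,11\},\{5,6,11\},\{6,7,11\}$, and a short case analysis shows that exactly the two cliques $\{3,4,10\}$ and $\{5,6,11\}$ cannot be isolated by a stable set. Forcing the other seven to be maximum yields only the \emph{relaxed} size pattern
\[
|L_1|=|L_2|=|L_{10}|=|L_{11}|=x_1,\quad |L_3|+|L_9|=|L_4|+|L_5|=|L_6|+|L_7|=x_1,\quad \omega(G)=2x_1,
\]
while the two unforced cliques contribute the inequalities $|L_3|+|L_4|\le x_1$ and $|L_5|+|L_6|\le x_1$. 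Thus the labels of \autoref{fig:emerald-v} are merely one balanced representative, and the coloring must cope with the whole relaxed family.

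For the coloring I would merge the classes into seven super-cliques
\[
B_0=L_1,\ B_1=L_2,\ B_2=L_3\cup L_9,\ B_3=L_{10},\ B_4=L_4\cup L_5,\ B_5=L_{11},\ B_6=L_6\cup L_7,
\]
each of size exactly $x_1$. Checking the adjacencies of $E-8$ shows that $B_0,\dots,B_6$ are complete/anticomplete in exactly the pattern of the seven classes of $C_7[K_{x_1}]$, with the sole exceptions of the two ``chords'' $L_3$--$L_4$ (between $B_2,B_4$) and $L_5$--$L_6$ (between $B_4,B_6$). I would therefore take the circular coloring of $C_7[K_{x_1}]$ from \autoref{lem:equal blowup of C7}, which uses $\lceil\tfrac{7}{3}x_1\rceil=\lceil\tfrac{7}{6}\omega(G)\rceil$ colors and assigns each $B_p$ an arc $A_p$ of $x_1$ colors with consecutive arcs disjoint, and then redistribute the colors inside the three doubled blocks $B_2,B_4,B_6$ to respect the two chords.

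The heart of the argument --- and the second obstacle --- is this redistribution. Both chords meet $B_4$, so $A_4$ is the tight spot: I would choose the color set $I_4\subseteq A_4$ for $L_4$ to cover as much of $A_4\cap A_6$ as possible and set $I_5=A_4\setminus I_4$ on $L_5$. Then $I_6\subseteq A_6\setminus I_5$ of size $|L_6|$ exists, since having covered $A_4\cap A_6$ leaves $A_6$ almost disjoint from $I_5$ and $|L_6|\le|L_4|$ (a consequence of $|L_5|+|L_6|\le x_1$ together with $|L_4|+|L_5|=x_1$); and $I_3\subseteq A_2\setminus I_4$ of size $|L_3|$ exists \emph{automatically}, because $|I_4\cap A_2|\le|L_4|\le x_1-|L_3|$ by $|L_3|+|L_4|\le x_1$. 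Finishing with $I_9=A_2\setminus I_3$ and $I_7=A_6\setminus I_6$ gives a proper coloring. Pleasantly, this feasibility check uses only that $|A_4\cap A_6|\le x_1$ and the two inequalities, so it is uniform in $x_1\bmod 3$ and needs no separate small-case analysis. I expect the only delicate points in the write-up to be verifying that $B_0,\dots,B_6$ really induce $C_7[K_{x_1}]$ plus exactly those two chords, and the bookkeeping that the redistributed arcs stay disjoint across every edge of $E-8$.
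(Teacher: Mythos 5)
Your proposal is correct, and it does more than take a different route: it exposes and repairs a genuine gap in the paper's own proof of this lemma. The paper's proof rests on the assertion that for \emph{each} maximal clique $K$ of $H=E-8$ there is a stable set $S_K$ meeting every maximal clique of $H$ except $K$, and then argues via the dichotomy ``some blowup of a maximal clique is non-maximum (strong stable set, induct)'' versus ``all nine are maximum (forcing the sizes in \autoref{fig:emerald-v})''. As you observe, that assertion is false precisely for $K=\{3,4,10\}$ and $K=\{5,6,11\}$; I confirmed this by enumerating the maximal stable sets of $E-8$: each one misses at least one maximal clique other than $\{3,4,10\}$ (and symmetrically for $\{5,6,11\}$, under the automorphism swapping $1\leftrightarrow 2$, $3\leftrightarrow 6$, $7\leftrightarrow 9$, $10\leftrightarrow 11$, $4\leftrightarrow 5$). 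Hence the paper's two cases are not exhaustive: for example, $|L_1|=|L_2|=|L_{10}|=|L_{11}|=4$, $|L_3|=|L_4|=|L_6|=1$, $|L_5|=|L_7|=|L_9|=3$ makes all seven ``isolatable'' cliques maximum while $\{3,4,10\}$ is not, and this blowup has no strong stable set at all, so neither of the paper's cases applies to it. Your relaxed pattern ($|L_1|=|L_2|=|L_{10}|=|L_{11}|=x_1$, $|L_3|+|L_9|=|L_4|+|L_5|=|L_6|+|L_7|=x_1$, together with $|L_3|+|L_4|\le x_1$ and $|L_5|+|L_6|\le x_1$) is exactly what the seven forced equalities yield, and it strictly contains the paper's Case~2 configuration.

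Your coloring of the residual case is also correct and genuinely different from the paper's. Where the paper (in its narrower Case~2) splits $G$ into $C_7[K_{x_2}]$ and $C_7[K_{x_3}]$, adds the two bounds, and then needs a separate inductive patch when $x_2,x_3\equiv 1\pmod 3$, you build a single proper coloring with $\lceil\frac{7}{3}x_1\rceil=\lceil\frac{7}{6}\omega(G)\rceil$ colors by redistributing the color sets of a $C_7[K_{x_1}]$-coloring inside $B_2=L_3\cup L_9$, $B_4=L_4\cup L_5$, $B_6=L_6\cup L_7$; I verified both the super-clique adjacency pattern (only the two chords $L_3$--$L_4$ and $L_5$--$L_6$ deviate from $C_7[K_{x_1}]$) and the feasibility steps, namely $|L_6|\le|L_4|$ and $|A_4\cap A_6|\le x_1$ give $|A_6\setminus I_5|\ge|L_6|$, while $|L_3|+|L_4|\le x_1$ gives $|A_2\setminus I_4|\ge|L_3|$, after which all eighteen edges of $E-8$ receive disjoint color sets. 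This is uniform in $x_1\bmod 3$, so no small-case analysis is needed. Two minor write-up points: the strong stable set you remove from $G$ should be stated as a transversal (one vertex of $L_u$ for each $u\in S_K$) rather than $S_K$ itself; and since the explicit modular coloring in the proof of \autoref{lem:equal blowup of C7} is proper only for $t\ge 2$, it is worth noting that in your residual case $x_1\ge 2$ holds automatically (as $L_3,L_9\neq\emptyset$), or simply invoke that lemma's statement rather than its construction.
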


\begin{proof}
Let $H=E-8$. We prove by induction on $|G|$. Suppose that $L_u$ is the clique that substitutes the vertex $u\in V(H)$.
If $L_u=\emptyset$ for some $u\in V(E-8)$, then $G$ is a blowup of a proper induced subgraph of $H$. It follows from \autoref{lem:emerald-2v} that $\chi(G)\le \lceil \frac{7}{6}\omega(G) \rceil$. 
Therefore, we assume that $|L_u|\ge 1$ for each $u\in V(H)$.
Note that for each maximal clique $K$ of $H$, there exists an stable set $S_K$ of $H$  of size 3 such that $S$ meets every maximal clique of $H$ except $K$.

\noindent {\bf Case 1.} There exists a maximal clique $K$ of $H$ such that $\bigcup_{u\in K} L_u$ is not a maximum clique of $G$. 
Then there exsits an stable set $S$ of $G$ of size 3 such that $\omega(G-S)\le \omega(G)-1$. By the inductive hypothesis,
\begin{align*}
\chi(G) & \le \chi(G-S)+1\\
			 & \le \lceil \frac{7}{6}(\omega(G)-1) \rceil + 1\\
			 & \le \lceil \frac{7}{6}\omega(G) \rceil.
\end{align*}

\noindent {\bf Case 2.} For every maximal clique $K$ of $H$, $\bigcup_{u\in K} L_u$ is a maximum clique of $G$. 
Then the sizes of $L_u$ must be the ones shown in \autoref{fig:emerald-v}, where $x_1,x_2,x_3$ are positive integers
and $x_1=x_2+x_3$. For each clique $L_u$ with $|L_u|=x_1$ or $|L_u|=x_2$, let $L'_u\subseteq L_u$ be a set of size $x_2$.
Denote by $G'$ the subgraph of $G$ induced by the union of those $L'_u$. Since $x_1=x_2+x_3$,
$G'=C_7[K_{x_2}]$ and $G-G'=C_7[K_{x_3}]$. Note that $\omega(G)=2(x_2+x_3)$.
It follows from \autoref{lem:equal blowup of C7} that,
\begin{align*}
	\chi(G) & \le \chi(G')+\chi(G-G')\\
			    & = \lceil \frac{7}{6}(2x_2) \rceil + \lceil \frac{7}{6}(2x_3) \rceil\\
			    & \le \lceil \frac{7}{6}\omega(G) \rceil,
\end{align*}
unless $x_2,x_3\equiv 1 \pmod 3$. If $x_2=x_3=1$, then it is easy to see that $\omega(G)=4$ and $\chi(G)=5=\lceil \frac{7}{6} \omega(G) \rceil$. By symmetry, we now assume that $x_2\equiv 1 \pmod 3$ and $x_2\ge 4$. 
For each $u\in V(E-8)$ with $|L_u|\in \{x_2,x_1\}$,
let $L'_u\subseteq L_u$ of size 3. Denote by $H$ the subgraph of $G$ induced by those $L'_u$. Then $H=C_7[K_3]$ and $\omega(G-H)\le \omega(G)-6$. By the inductive hypothesis and \autoref{lem:equal blowup of C7},
\begin{align*}
	\chi(G) & \le \chi(G-H) + \chi(H) \\
				& \le \lceil \frac{7}{6} (\omega(G)-6) \rceil + 7\\
				& \le  \lceil \frac{7}{6} \omega(G) \rceil.
\end{align*}
This completes the proof.
\end{proof}

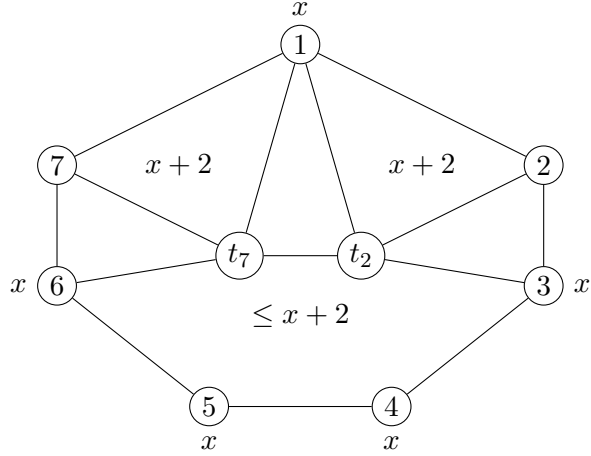
\begin{figure}[tb]
 \centering
 \begin{tikzpicture}[scale=0.8]
  \tikzstyle{vertex}=[draw, circle, fill=white!100, minimum width=4pt,inner sep=2pt]
  
  \node[vertex,label=above:$x$] (v1) at (0,3) {1};
  \node[vertex] (v2) at (4,1) {2};
  \node[vertex,label=right:$x$] (v3) at (4,-1) {3};
  \node[vertex,label=below:$x$] (v4) at (1.5,-3) {4};
  \node[vertex,label=below:$x$] (v5) at (-1.5,-3) {5};
  \node[vertex,label=left:$x$] (v6) at (-4,-1) {6};
  \node[vertex] (v7) at (-4,1) {7};
  \draw (v1)--(v2)--(v3)--(v4)--(v5)--(v6)--(v7)--(v1);
 
  \node[vertex] (s1) at (-1,-.5) {$t_7$};
  \node[vertex] (s2) at (1,-.5) {$t_2$};
  \draw (s1)--(v7) (s1)--(v6) (s1)--(v1) ;
  \draw (s2)--(v3) (s2)--(v2) (s2)--(v1) ;
  \draw (s1)--(s2);
  \node at (2,1) {$x+2$};
  \node at (-2,1) {$x+2$};
  \node at (0,-1.5) {$\le x+2$};
 \end{tikzpicture}
 \caption{The graph $G_x$, where $|L_7|+|L_{t_7}|=|L_2|+|L_{t_2}|=x+2$ and $|L_{t_7}|+|L_{t_2}|\le x+2$.}
 \label{fig:Gx}
\end{figure}

We need the following lemma to handle certain subgraphs of blowups of the emerald in the subsequent proof.

\begin{lemma}\label{lem:Gx}
Let $G_x$ be a nonempty blowup of the graph in \autoref{fig:Gx}, where $L_i$ is the clique that substitutes the vertex $i\in \{1,2,3,4,5,6,7,t_2,t_7\}$.
Suppose that $x\ge 1$, $|L_7|+|L_{t_7}|=|L_2|+|L_{t_2}|=x+2$ and $|L_{t_7}|+|L_{t_2}|\le x+2$.
Then $\chi(G_x)\le \lceil \frac{7}{6}(2x+1) \rceil$.
\end{lemma}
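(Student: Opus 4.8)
The plan is to reduce the problem to coloring a blowup of the odd cycle $C_7$. Set $a=|L_{t_7}|$ and $b=|L_{t_2}|$, so that $|L_7|=x+2-a$, $|L_2|=x+2-b$, and the hypothesis reads $a+b\le x+2$. Let $G'$ be the blowup of $C_7$ (on positions $1,\dots,7$ in the cyclic order of \autoref{fig:Gx}) with clique sizes $(x,x+2,x,x,x,x,x+2)$, obtained from $G_x$ by \emph{merging} $t_7$ into position $7$ and $t_2$ into position $2$; formally set $\widehat L_7=L_7\cup L_{t_7}$ and $\widehat L_2=L_2\cup L_{t_2}$. One checks directly from \autoref{fig:Gx} that $\widehat L_2$ and $\widehat L_7$ are cliques of size $x+2$, that $\widehat L_2$ is complete to $L_1\cup L_3$ and $\widehat L_7$ is complete to $L_6\cup L_1$, and that the \emph{only} adjacencies of $G_x$ absent from $G'$ are the edges between $L_{t_7}$ and $L_{t_2}$ (these are present and complete because $t_2t_7$ is an edge, whereas positions $2$ and $7$ are nonadjacent in $C_7$). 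Thus $G_x$ is exactly $G'$ together with all edges between $L_{t_7}$ and $L_{t_2}$.

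First I would color $G'$. Since $G'$ is a blowup of the odd cycle $C_7$, its chromatic number equals $\max\bigl(\omega(G'),\lceil W'/3\rceil\bigr)$, where $W'=7x+4$ is the total weight and the denominator $3=\alpha(C_7)$ is the independence number of $C_7$ (the chromatic number of a blowup of $C_{2k+1}$ is the maximum of its clique number and $\lceil(\text{total weight})/k\rceil$). Here $\omega(G')=2x+2$, attained on two consecutive positions $\{1,2\}$, and a short computation gives $\lceil(7x+4)/3\rceil=\lceil\frac{7}{6}(2x+1)\rceil=:N$ for every $x\ge1$. Hence $G'$ admits a proper coloring $\phi$ with exactly $N$ colors, realised by a consecutive color-interval coloring generalizing the one in \autoref{lem:equal blowup of C7}. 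Let $S_2$ and $S_7$ denote the sets of colors $\phi$ uses on $\widehat L_2$ and $\widehat L_7$; each has size $x+2$.

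The remaining, and main, step is to turn $\phi$ into a proper coloring of $G_x$. Because $\widehat L_2$ and $\widehat L_7$ are cliques, I may arbitrarily permute the colors of $\phi$ within each of them without affecting properness on $G'$, since the external constraints depend only on the color \emph{sets} $S_2,S_7$. It therefore suffices to choose which $b$ colors of $S_2$ are placed on $L_{t_2}$ and which $a$ colors of $S_7$ are placed on $L_{t_7}$ so that these two color sets are disjoint; then every edge between $L_{t_7}$ and $L_{t_2}$ is properly colored and $\phi$ becomes a proper $N$-coloring of $G_x$. Writing $I=S_2\cap S_7$, the colors of $S_2\setminus I$ may be used freely on $L_{t_2}$ and those of $S_7\setminus I$ freely on $L_{t_7}$, so a disjoint choice exists provided
\[ \max\{0,\,b-(x+2-|I|)\}+\max\{0,\,a-(x+2-|I|)\}\le |I|, \]
which one verifies for every $|I|\in\{0,\dots,x+2\}$ precisely because $a+b\le x+2$. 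This is a Hall-type transversal condition and could equally be phrased through König's theorem, in the spirit of \autoref{lem:7-bracelet with one pair}.

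The hardest part is this last step, and it is exactly where the hypothesis $|L_{t_7}|+|L_{t_2}|\le x+2$ is used in an essential way: if $a+b>x+2$ and $S_2=S_7$, no disjoint choice exists, so the bound would fail. A secondary technical point is the explicit coloring of the weighted cycle $G'$: although the value $\chi(G')=N$ is immediate from the odd-cycle formula, producing a concrete proper $N$-coloring requires care at the seam between positions $7$ and $1$, where the slack $3N-W'\in\{0,1,2\}$ must be distributed so that the two end-intervals do not overlap modulo $N$.
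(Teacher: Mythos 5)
Your proposal is correct, and it takes a genuinely different route from the paper's. The paper proves \autoref{lem:Gx} by induction on $x$: for $x\ge 7$ it peels off a copy of $C_7[K_3]$ (using $t_2$ or $t_7$ as one of the seven cycle positions, after a case analysis on which of $|L_7|,|L_{t_2}|,|L_{t_7}|$ are at least $4$) so that what remains is $G_{x-3}$, and it dismisses the base cases $1\le x\le 6$ as routine. Your argument is non-inductive: merging $L_{t_7}$ into $L_7$ and $L_{t_2}$ into $L_2$ does exactly recover the $C_7$-blowup $G'$ with weights $(x,x+2,x,x,x,x,x+2)$ plus a complete bipartite graph between $L_{t_7}$ and $L_{t_2}$ (I checked all adjacencies in \autoref{fig:Gx}; in particular $\widehat L_2$ and $\widehat L_7$ are cliques whose external neighborhoods, $L_1\cup L_3$ and $L_1\cup L_6$, are complete to them, so only the color \emph{sets} $S_2,S_7$ matter). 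Your ceiling identity $\lceil(7x+4)/3\rceil=\lceil\tfrac76(2x+1)\rceil$ is correct (the two differ only if $6$ divides $14x+7$, which is odd), and your transversal condition does follow from $a+b\le x+2$ in every case ($|I|\le x+2$ handles the case where both overflow terms are positive, and $b\le x+2$ handles the case of one). This replaces the paper's induction, and also the K\"onig-style matching of \autoref{lem:7-bracelet with one pair}, by a direct Hall-type check; what it buys is a cleaner, construction-free-of-induction proof with no unverified small cases.

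The one step you cannot leave as an unreferenced appeal is $\chi(G')\le N$ where $N=\lceil(7x+4)/3\rceil$. The paper only proves the equal-weight case (\autoref{lem:equal blowup of C7}), and the naive extension --- color $C_7[K_x]$ with $\lceil 7x/3\rceil$ colors and add two stable sets $\{2,7\}$ --- uses $\lceil 7x/3\rceil+2$ colors, which exceeds $N$ by $1$ whenever $x\not\equiv 0\pmod 3$; so the ``seam'' issue you flag is real, not cosmetic. The weighted odd-cycle formula you invoke is a true, known result, but since it is not in this paper you should either cite it precisely or prove the instance you need, which is short: write $x=3m+q$ with $q\in\{0,1,2\}$, color the sub-blowup $C_7[K_{3m}]$ with $7m$ colors by \autoref{lem:equal blowup of C7}, and cover the leftover weights $(q,q+2,q,q,q,q,q+2)$ with $N-7m$ additional stable sets of positions: for $q=0$ take $\{2,7\},\{2,7\}$ ($N-7m=2$); for $q=1$ take $\{2,4,7\},\{2,5,7\},\{2,7\},\{1,3,6\}$ ($N-7m=4$); for $q=2$ take $\{2,4,7\},\{2,4,7\},\{2,5,7\},\{2,5,7\},\{1,3,6\},\{1,3,6\}$ ($N-7m=6$). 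With this filled in, your proof is complete.
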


\begin{proof}
We prove by induction on $x$. It is routine to verify that the lemma is true for $1\le x\le 6$. 
Now suppose that $x\ge 7$. Suppose that $|L_7|,|L_{t_2}|\ge 4$. For each $u\in \{1,t_2,3,4,5,6,7\}$,
let $L'_u\subseteq L_u$ be a set of size 3 and $H$ be the union of those $L'_u$. Then $H=C_7[K_3]$
and $G-H$ is isomorphic to $G_{x-3}$. By induction and \autoref{lem:equal blowup of C7},
\begin{align*}
	\chi(G) & \le \chi(H) + \chi(G-H) \\
			   &  \le \lceil \frac{7}{6} (2(x-3)+1) \rceil + 7\\
			   & =\lceil \frac{7}{6} (2x+1) \rceil.
\end{align*}
So $|L_7|\le 3$ or $|L_{t_2}|\le 3$. If $|L_7|\le 3$, then $|L_{t_7}|\ge x-1$, $|L_{t_2}|\le 3$ and $|L_2|\ge x-1$.
Since $x\ge 7$, $|L_{t_7}|,|L_2|\ge 6$ and the same argument applies. So $|L_{t_2}|\le 3$. By symmetry, $|L_{t_7}|\le 3$.
This implies that $|L_2|,|L_7|\ge x-1\ge 6$.  For $u=1,2,\ldots,7$, let $L'_u\subseteq L_u$ be a set of size 3 and $H$ be
the union of those $L'_u$. Then $H=C_7[K_3]$.  Since $|L_{t_2}|+|L_{t_7}|\le 6\le (x-3)+2$, $G-H=G_{x-3}$. 
So we are done by applying the inductive hypothesis to $G-H$.
\end{proof}

The next lemma gives a near optimal bound on the chromatic number of blowups $G$ of the emerald with $p(G)\le 2$.
\begin{lemma}\label{lem:min size at most 2-7/6 bound}
If $G$ is a blowup of the emerald with $p(G)\le 2$, then $\chi(G)\le \lceil \frac{7}{6}\omega(G) \rceil+1$.
\end{lemma}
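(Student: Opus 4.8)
The plan is to induct on the value of $p(G)$ by peeling off a uniform piece and reducing to a blowup of the emerald with an empty branch — equivalently a blowup of $E$ minus a vertex — to which \autoref{lem:emerald-v} applies. The enabling observation is that the emerald is vertex-transitive: labelling its eleven vertices so that adjacency is ``cyclic distance $1$ or $2$'' exhibits $E$ as the circulant $C_{11}(1,2)$, whose maximal cliques are exactly the eleven triples $\{j,j+1,j+2\}$. Consequently $E-u\cong E-8$ for every vertex $u$, and so \autoref{lem:emerald-v} in fact yields $\chi(G')\le\lceil\frac{7}{6}\omega(G')\rceil$ for any blowup $G'$ of any $E-u$, not merely of $E-8$. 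I will use this freely.

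First I would dispose of the case $p(G)\le 1$. Let $i$ attain the minimum, so $|L_i|=p(G)\le 1$. Then $G-L_i$ is a blowup of $E-i$, whence $\chi(G-L_i)\le\lceil\frac{7}{6}\omega(G-L_i)\rceil\le\lceil\frac{7}{6}\omega(G)\rceil$ by the remark above. Colouring the at most one vertex of $L_i$ with a single fresh colour gives $\chi(G)\le\lceil\frac{7}{6}\omega(G)\rceil+1$, as required.

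The substantive case is $p(G)=2$, where every branch satisfies $|L_u|\ge 2$; note that then $\omega(G)\ge 6$ since each maximal clique of $E$ meets three branches. Here I would peel off a full equal-size blowup: choose $L_u'\subseteq L_u$ with $|L_u'|=2$ for all eleven branches and set $H=G[\bigcup_u L_u']$, so that $H\cong E[K_2]$. Since the minimum branch becomes empty, $G-V(H)$ is a blowup of some $E-u$, and since every triangle of $E$ loses exactly $6$ vertices we get $\omega(G-V(H))\le\omega(G)-6$. Thus \autoref{lem:emerald-v} gives $\chi(G-V(H))\le\lceil\frac{7}{6}(\omega(G)-6)\rceil=\lceil\frac{7}{6}\omega(G)\rceil-7$, and combining colourings yields $\chi(G)\le\chi(H)+\chi(G-V(H))\le\chi(E[K_2])+\lceil\frac{7}{6}\omega(G)\rceil-7$. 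Everything therefore reduces to the single numerical fact $\chi(E[K_2])\le 8$.

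The main obstacle is precisely this base estimate $\chi(E[K_2])\le 8$. Since $\omega(E[K_2])=6$ we have $8=\lceil\frac{7}{6}\cdot 6\rceil+1$, so $8$ is exactly the target value and is in fact sharp for $E[K_2]$; it cannot be obtained from any generic $\frac{7}{6}$-type bound, which is why this case needs a separate hands-on argument rather than another appeal to \autoref{lem:emerald-v}. I would prove it by exhibiting a $2$-fold colouring of $E\cong C_{11}(1,2)$ with $8$ colours, i.e.\ an assignment of $2$-subsets of $\{1,\dots,8\}$ to the branches that is disjoint across any two branches at cyclic distance $1$ or $2$. Concretely, assign to vertex $j\in\{0,1,\dots,10\}$ the $(j\bmod 4)$-th of the four pairs $\{1,2\},\{3,4\},\{5,6\},\{7,8\}$. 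Two branches then receive intersecting (indeed equal) sets iff $j\equiv j'\pmod 4$, which forces $|j-j'|\in\{4,8\}$ and hence cyclic distance $\ge 3$ (checking the wrap-around $10$–$0$ explicitly), so adjacent branches always get disjoint pairs. Colouring the two clones in $L_u'$ by the two colours of the pair assigned to $u$ then $8$-colours $H\cong E[K_2]$, and the arithmetic of the previous paragraph completes the proof.
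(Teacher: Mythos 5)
Your proof is correct, and it takes a genuinely different and much shorter route than the paper's. The paper proves this lemma by induction on $|G|$: after reducing along strong stable sets, it runs a lengthy analysis of which blown-up triangles of the emerald are maximum (\autoref{clm:triangles of distance 1 or 4}--\autoref{clm:one end triangle}), parametrizes the critical configuration (\autoref{fig:special emerald}), and finishes via \autoref{lem:Gx} together with a parity case split and hand-built colorings, ultimately forcing the contradiction $G=E[K_2]$. You instead argue by direct decomposition: the identification $E\cong C_{11}(1,2)$ is indeed valid (reading the emerald around its triangle cycle gives the cyclic order $6,7,1,8,2,9,3,10,4,5,11$, in which adjacency is exactly cyclic distance $1$ or $2$), so $E$ is vertex-transitive and \autoref{lem:emerald-v} covers blowups of $E-u$ for every $u$ --- a fact the paper itself exploits when it assumes $|L_8|=p$. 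This settles $p(G)\le 1$ with one extra color; for $p(G)=2$ you peel off an induced copy of $E[K_2]$, which lowers the clique number by exactly $6$ (legitimate, since the maximal cliques of $E$ are precisely its eleven triangles and every branch loses exactly two vertices), leaving a blowup of some $E-u$. Your mod-$4$ pair assignment is a correct $8$-coloring of $E[K_2]$, because the residue classes of $\{0,\dots,10\}$ modulo $4$ have pairwise cyclic distance at least $3$, and the arithmetic $8+\lceil\frac{7}{6}(\omega(G)-6)\rceil=\lceil\frac{7}{6}\omega(G)\rceil+1$ closes the argument. On the trade-off: your route is self-contained modulo \autoref{lem:emerald-v} and even proves $\chi(E[K_2])\le 8$, which the paper only asserts; the paper's heavier machinery, however, is not wasted in context, since its intermediate claims are recycled in Case 4 of \autoref{lem:min size at most 2} (\autoref{clm:triangles w=13} and \autoref{clm:middle triangle w=13} are justified as ``similar to the proof'' of this lemma), so if your proof were substituted, those strong-stable-set claims would have to be stated and proved separately there --- though their proofs use only the absence of strong stable sets and would port unchanged.
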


\begin{proof}
We prove the lemma by induction on $|G|$. 
Suppose that $L_i$ is the clique that substitutes the vertex $i\in V(E)$. 
We assume that $i\in L_i$ for $1\le i\le 11$.
Let $p = p(G)$. Since $E$ is vertex-transitive, we may assume that $|L_8|=p$.
If $p\le 1$, then $\chi(G)\le \lceil \frac{7}{6}\omega(G) \rceil + 1$ by \autoref{lem:emerald-v}.
So $p=2$. If $G=E[K_2]$, then $\chi(G)=8,\omega(G)=6$ and $\chi(G)=\lceil \frac{7}{6}\omega(G) \rceil+1$.
Now assume that $G$ is not $E[K_2]$ and the lemma holds for any blowup $G'$ of the emerald such that $|G'|<|G|$
and $p(G')\le 2$.
In the following, we assume that $G$ has no strong stable sets for otherwise we are done by the inductive hypothesis.

We say that a vertex $i\in V(E)$ is {\em minimum} if $|L_i|=p$. 
A triangle $\{i,j,k\}$ in the emerald is {\em maximum} if $|L_i|+|L_j|+|L_k|=\omega(G)$.
Let $T(E)$ be the {\em triangle graph} of $E$ whose vertices are all triangles of $E$ and two triangles
are adjacent in $T(E)$ if and only if they share an edge in $E$. Note that $T(E)=C_{11}$. 
The {\em distance} of two triangles in the emerald is their distance in $T(E)$. We proceed with a sequence of claims.

\begin{claim}\label{clm:triangles of distance 1 or 4}
If a triangle in the emerald is not maximum, then any triangle of distance 1 or 4 to that triangle is maximum.
\end{claim}

\begin{proof}[Proof of \autoref{clm:triangles of distance 1 or 4}]
By symmetry, assume that $\{1,2,8\}$ is not maximum. 
If $\{1,7,8\}$ is not maximum, then $\{4,6,9\}$ is a strong stable set.
If $\{5,6,11\}$ is not maximum, then $\{4,7,9\}$ is a strong stable set. 
In either case, it contradicts that $G$ has no strong stable set.
\end{proof} 

For a vertex $i\in V(E)$, $N_E(i)$ induces a $P_4$. 
The {\em middle triangle} of $i$ is the triangle consisting of the middle edge of the $P_4$ and $i$.
The other two triangles containing $i$ are called {\em end triangles}.

\begin{claim}\label{clm:middle triangle of a minimum vertex}
The middle triangle of a minimum vertex is maximum.
\end{claim}

\begin{proof}
Suppose that $T=\{1,2,8\}$ is not maximum. 
By \autoref{clm:triangles of distance 1 or 4}, 
\[
	\text{triangles }\{1,7,8\},\{2,8,9\},\{5,6,11\},\{3,4,10\} \text{ are maximum}.  
\]
This implies that $|L_6|=|L_3|=p$ and $\{1,6,7\},\{2,3,9\}$ are maximum, 
which in turn implies that $|L_4|=|L_5|=p$ and $\{4,5,10\},\{4,5,11\}$ are maximum.
Therefore, $|L_{10}|=|L_{11}|$, $|L_1|\ge |L_{11}|$, and $|L_2|\ge |L_{10}|$.
It follows that $|L_1|+|L_2|+|L_8|\ge |L_{10}|+|L_{11}|+p= \omega(G)$. 
This contradicts that $T$ is not maximum.
\end{proof}

\begin{claim}\label{clm:both end triangles}
If both end triangles of a minimum vertex are not maximum, then $\chi(G)\le \lceil \frac{7}{6}\omega(G) \rceil+1$.
\end{claim}

\begin{proof}
By \autoref{clm:middle triangle of a minimum vertex}, $T=\{1,2,8\}$ is maximum.
Suppose that $\{1,7,8\}$ and $\{2,8,9\}$ are not maximum.
By \autoref{clm:triangles of distance 1 or 4}, triangles $\{2,3,9\},\{1,6,7\},\{3,9,10\},\{6,7,11\},\{4,5,10\},\{4,5,11\}$ are maximum. Therefore, $|L_1|=|L_2|=|L_4|=|L_5|$. Let $x=|L_1|$, $y=|L_4|$ and $z=|L_5|$. 
Note that $\omega(G)=2x+2$, $y+z=x+2$ and $G$ is the graph shown in \autoref{fig:special emerald}, 
where $r,s$ are nonnegative integers.

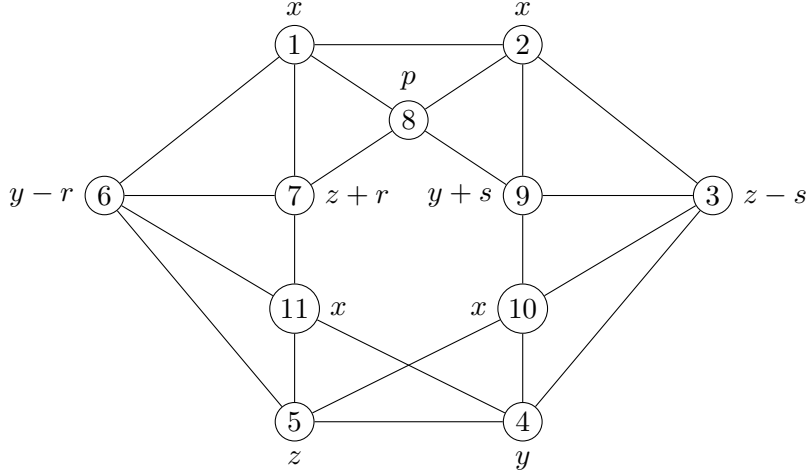
\begin{figure}[t]
	 \centering
	 \begin{tikzpicture}[scale=1]
	  \tikzstyle{vertex}=[draw, circle, fill=white!100, minimum width=4pt,inner sep=2pt]
	  
	  \node[vertex,label=above:$x$] (v1) at (-1.5,2) {1};
	  \node[vertex,label=above:$x$] (v2) at (1.5,2) {2};
	  \node[vertex,label=right:$z-s$] (v3) at (4,0) {3};
	  \node[vertex,label=below:$y$] (v4) at (1.5,-3) {4};
	  \node[vertex,label=below:$z$] (v5) at (-1.5,-3) {5};
	  \node[vertex,label=left:$y-r$] (v6) at (-4,0) {6};
	  \draw (v1)--(v2)--(v3)--(v4)--(v5)--(v6)--(v1);
	  
	  \node[vertex,label=right:$z+r$] (v14) at (-1.5,0) {7};
	  \draw (v14)--(v1) (v14)--(v6);
	  \node[vertex,label=left:$y+s$] (v25) at (1.5,0) {9};
	  \draw (v25)--(v2) (v25)--(v3);
	  \node[vertex,label=above:$p$] (v36) at (0,1) {8};
	  \draw (v36)--(v1) (v36)--(v2) (v36)--(v14) (v36)--(v25);
	  \node[vertex,label=right:$x$] (s1) at (-1.5,-1.5) {11};
	  \draw (s1)--(v4) (s1)--(v5) (s1)--(v6) (s1)--(v14);
	  \node[vertex,label=left:$x$] (s2) at (1.5,-1.5) {10};
	  \draw (s2)--(v3) (s2)--(v4) (s2)--(v5) (s2)--(v25);
	 \end{tikzpicture}
	 \caption{A special emerald with parameters $x,y,z,r,s$. The label outside the vertex $i$ represents the size of $L_i$.}
	 \label{fig:special emerald}
	\end{figure}

Since $\{2,8,9\}$ and $\{1,7,8\}$ are not maximum, $y-r,z-s\ge 3$.
Hence, $y,z\ge 3$ and $x\ge 4$. Suppose first that $r\ge 3$. 
For each vertex $i\in \{1,2,4,7,9,10,11\}$, let $L'_i\subseteq L_i$ be a set of size 3.
Note that $H:=G[L'_1\cup L'_2\cup L'_4\cup L'_7\cup L'_9\cup L'_{10}\cup L'_{11}]$ is isomorphic to $C_7[K_3]$.  
By \autoref{lem:equal blowup of C7}, $\chi(H)=7$.
Since $r\ge 3$, $|L_6|+|L_5|+|L_{11}|\le \omega(G)-3$ and so $\omega(G-H)\le \omega(G)-6$. 
By the inductive hypothesis,
\begin{align*}
\chi(G) &\le \chi(G-H) +\chi(H)\\
 		& \le \lceil \frac{7}{6} (\omega(G)-6) \rceil +1 +7\\
 		& \le \lceil \frac{7}{6} \omega(G)\rceil +1.
\end{align*}
Therefore, $r\le 2$. By symmetry, $s\le 2$. 

Suppose that $y+s<x-2$. For each vertex $i\in \{1,2,3,5,7,10,11\}$, let $L'_i\subseteq L_i$ be a set of size 3.
Note that $H:=G[L'_1\cup L'_2\cup L'_3\cup L'_5\cup L'_7\cup L'_{10}\cup L'_{11}]$ is isomorphic to $C_7[K_3]$.  
Since $y+s<x-2$, $|L_2|+|L_8|+|L_9|\le \omega(G)-3$ and so $\omega(G-H)\le \omega(G)-6$. 
By the inductive hypothesis,  $\chi(G)\le \lceil \frac{7}{6} \omega(G)\rceil +1$.
Therefore, $y+s\ge x-2$ and thus $z-s=(x+2)-(y+s)\le 4$. It follows that $z-4\le s\le 2$, i.e., $z\le 6$.
By symmetry, $y\le 6$ and so $x\le 10$. We consider two cases depending on the parity of $x$.

\noindent {\bf Case 1.} $x$ is odd. 

For $i\in \{1,2\}$, let $L'_i\subseteq L_i$ be a set of size $t=\frac{x+1}{2}$ and let $v\in L_8$. 
Let 
\[
	H=G[L_7\cup L_6\cup L_5\cup L_4\cup L_3\cup L_9\cup L'_1\cup L'_2\cup \{v\}].
\]
Note that $G-H$ is a perfect graph with $\omega(G-H)=x$.
If $\chi(H)\le x+2+t$, then
\begin{align*}
	\chi(G)	& \le (x+2+t)+x \\ 
					& =\omega(G)+t \\
					& =\frac{5}{4}\omega(G).
\end{align*}
Since $x\in \{5,7,9\}$, $\lfloor \frac{5}{4}\omega(G) \rfloor \le \lceil \frac{7}{6}\omega(G)+1 \rceil$ and we are done.

Therefore, it remains to show $\chi(H)\le x+2+t$. 
Note that $\omega(H)=x+2+t$. We first show that 
\begin{equation}\label{equ:disjoint colors}
	(z+r)+(y+s)\le (x+2)+(t-1)=\omega(H)-1.
\end{equation}
Recall that $r,s\le 2$.
If $x=9$, then $t=5$ and (\ref{equ:disjoint colors}) holds. 
If $x=7$, then $t=4$. If (\ref{equ:disjoint colors})  does not hold, then $r=s=2$. 
Since $y-r,z-s\ge 3$, it follows that $y,z\ge 5$ and so $y+z=10$. This contradiction that $y+z=x+2$.
If $x=5$, then $t=3$. If (\ref{equ:disjoint colors}) does not hold, then $r+s\ge 3$. It follows that $y+z\ge 6+(r+s)>x+2$. 
Therefore, (\ref{equ:disjoint colors}) is proved.

Now we show 
\begin{equation}\label{equ:coloring of N[v]}
\text{There is a proper } \omega(H)\text{-coloring } \phi \text{ of } L'_1\cup L'_2\cup L_7\cup L_9\cup \{v\}  \text{  with } \phi(L_7)\cap \phi(L_9) = \emptyset.
\end{equation}
We consider three cases.

\noindent If $y+s,z+r\le t$, 

$\bullet$ color the clique $L'_1\cup L'_2\cup \{v\}$ using $|L'_1\cup L'_2\cup \{v\}|\le \omega(H)$ colors;

$\bullet$ color $L_7$ using the colors in $\phi(L'_2)$ and color $L_9$ using the colors in $\phi(L'_1)$. 

\noindent If $y+s\le t, z+r\ge t$, 

$\bullet$ color the clique $L'_1\cup L_7\cup \{v\}$ using $|L'_1\cup L_7\cup \{v\}|\le \omega(H)$ colors;

$\bullet$ color $L'_2$ using the colors in $\phi(L_7)$ and color $L_9$ using the colors in $\phi(L'_1)$. 

\noindent If $y+s,z+r\ge t$,

$\bullet$ color $L_7$ and $L_9$ with disjoint sets of colors. By (\ref{equ:disjoint colors}), it is possible to choose a color
from $\{1,2,\ldots,\omega(H)\}$ to color $v$;

$\bullet$ color $L'_1$ using the colors in $\phi(L_9)$ and color $L'_2$ using the colors in $\phi(L_7)$. 

In all cases, $\phi$ is the desired coloring and (\ref{equ:coloring of N[v]}) is proved.

Let $\phi$ be a coloring of $L'_1\cup L'_2\cup L_7\cup L_9\cup \{v\}$ guaranteed by (\ref{equ:coloring of N[v]}). 
We now extend $\phi$ to a coloring of $H$ as follows.

$\bullet$ Color $L_6$ with colors from $\{1,\ldots,\omega(H)\}\setminus (\phi(L'_1)\cup \phi(L_7))$ and color $L_3$ with colors from $\{1,\ldots,\omega(H)\}\setminus (\phi(L'_2)\cup \phi(L_9))$.

$\bullet$ Color $L_5$ using $|L_5|$ colors from $\phi(L_7)$ and color $L_4$ using $|L_4|$ colors from $\phi(L_9)$. 

Since $|L_7|\ge |L_5|$, $|L_9|\ge |L_4|$ and $\phi(L_7)\cap \phi(L_9)$ are disjoint, it follows that the above coloring is
a proper $\omega(H)$-coloring of $H$.

\noindent {\bf Case 2.} $x$ is even. 

{\bf Case 2.1} $x\in \{4,6\}$.

For $i\in \{1,2\}$, let $L'_i\subseteq L_i$ be a set of size $t=\frac{x}{2}+1$.
Let 
\[
	H=G[L_7\cup L_6\cup L_5\cup L_4\cup L_3\cup L_9\cup L'_1\cup L'_2].
\] 
Note that $G-H$ is a perfect graph with $\omega(G-H)=x$.
Moreover, $H$ is a perfect graph with $\omega(H)=x+2+t$. Since $2x+2=\omega(G)$, it follows that
\begin{align*}
\chi(G) & \le (x+2+t)+x \\
		    & =\frac{5}{4}\omega(G)+\frac{1}{2}.
\end{align*}
Since $x\in \{4,6\}$, $\lfloor \frac{5}{4}\omega(G)+\frac{1}{2} \rfloor \le \lceil \frac{7}{6}\omega(G) \rceil+1$.

{\bf Case 2.2} $x\in \{8,10\}$.

For $i\in \{1,2\}$, let $L'_i\subseteq L_i$ be a set of size $t=\frac{x}{2}$. 
Let 
\[
	H=G[L_7\cup L_6\cup L_5\cup L_4\cup L_3\cup L_9\cup L'_1\cup L'_2\cup L_8].
\] 
Note that $G-H$ is a perfect graph with $\omega(G-H)=x$.
If $\chi(H)\le x+2+t$, 
\begin{align*}
	\chi(G) & \le (x+2+t)+x \\
				& =\lceil \frac{7}{6}\omega(G) \rceil +1.
\end{align*} 
Therefore, it remains to show that $\chi(H)\le x+2+t$. 
Since $r,s\le 2$ and $t=\frac{x}{2}\ge 4$, $(z+r)+(y+s)\le \omega(H)$.
Moreover, if $(z+r)+(y+s)\ge \omega(H)-1$, then $r+s\ge 3$.
Consequently, there is a proper $\omega(H)$-coloring $\phi$ of $L'_1\cup L'_2\cup L_7\cup L_8\cup L_9$ such that
$|\phi(L_7)\cap \phi(L_9)|\le 2$, and if $|\phi(L_7)\cap \phi(L_9)|\ge 1$, then $r+s\ge 3$.
Hence, it is easy to see that one can extend $\phi$ to a proper coloring of $G$ by avoiding the colors
in $\phi(L_7)\cap \phi(L_9)$ on $L_1\cup L_2\cup L_4\cup L_5$.
\end{proof}

\begin{claim}\label{clm:one end triangle}
If an end triangle of a minimum vertex is not maximum, then $\chi(G)\le \lceil \frac{7}{6}\omega(G) \rceil+1$.
\end{claim}

\begin{proof}

Suppose that $\{2,8,9\}$ is not maximum. 
By \autoref{clm:middle triangle of a minimum vertex} and \autoref{clm:both end triangles}, $\{1,2,8\}$ and $\{1,7,8\}$ are maximum. 
It follows that $|L_6|=2$ and $\{1,6,7\}$ are maximum. By \autoref{clm:middle triangle of a minimum vertex}, $\{6,7,11\}$ are maximum.
By \autoref{clm:triangles of distance 1 or 4}, $\{2,3,9\}$ and $\{4,5,10\}$ are maximum.
Since $|L_6|=\min$, $|L_4|+|L_5|+|L_{11}|\ge |L_6|+|L_5|+|L_{11}|$ 
and so $\{4,5,11\}$ is maximum by \autoref{clm:triangles of distance 1 or 4}.

Suppose first that $\{3,9,10\}$ is maximum. It follows that $|L_1|=|L_2|=|L_7|=|L_{10}|=|L_{11}|$.
Let $x=|L_1|$. Note that $\omega(G)=2x+2$, $|L_4|+|L_5|=x+2$ and $|L_3|+|L_9|=x+2$. 
Then $G-(L_6\cup L_8)$ is isomorphic to the graph $G_x$ in \autoref{fig:Gx}.
By \autoref{lem:Gx}, 
\begin{align*}
\chi(G) & \le \lceil \frac{7}{6} (2x+1) \rceil +2 \\
			  & =\lceil \frac{7}{6} (\omega(G)-1) \rceil +2 \\
			  & \le \lceil \frac{7}{6} \omega(G) \rceil +1.
\end{align*}

Now suppose that $\{3,9,10\}$ is not maximum. By \autoref{clm:both end triangles}, 9 is not a minimum vertex and so $|L_9|\ge 3$. 
By \autoref{clm:triangles of distance 1 or 4}, $\{3,4,10\}$ is maximum and so $|L_4|> |L_9|\ge 3$.
This implies that $\{5,6,11\}$ is not maximum. Now $\{2,4,7\}$ is a strong stable set, a contradiction.
\end{proof}

By \autoref{clm:middle triangle of a minimum vertex} and \autoref{clm:one end triangle}, every triangle containing a minimum vertex
is maximum. This implies that $G=E[K_2]$, a contradiction.
\end{proof}

We now show that \autoref{thm:emerald} is true for blowups $G$ of the emerald with $p(G)\le 2$.

\begin{lemma}\label{lem:min size at most 2}
If $G$ is a blowup of the emerald and $p(G)\le 2$, then $\chi(G)\le \lceil \frac{11}{9} \omega(G) \rceil$.
\end{lemma}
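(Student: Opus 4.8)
The plan is to run an induction on $|G|$ that funnels almost every case into \autoref{lem:min size at most 2-7/6 bound}, which already delivers the near-optimal bound $\chi(G)\le\lceil\frac{7}{6}\omega(G)\rceil+1$. First I would strip off strong stable sets: if $G$ has a strong stable set $S$, then $G-S$ is again a blowup of the emerald with $p(G-S)\le p(G)\le 2$ and $\omega(G-S)=\omega(G)-1$, so the inductive hypothesis gives $\chi(G-S)\le\lceil\frac{11}{9}(\omega(G)-1)\rceil$; since $\frac{11}{9}>1$ we have $\lceil\frac{11}{9}\omega(G)\rceil\ge\lceil\frac{11}{9}(\omega(G)-1)\rceil+1$, whence $\chi(G)\le\chi(G-S)+1\le\lceil\frac{11}{9}\omega(G)\rceil$. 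From here on $G$ has no strong stable set, and this is the structural lever for the rest of the argument.

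Next I would dispose of the degenerate and the large regimes. By vertex-transitivity of $E$ assume $|L_8|=p$. If $p=0$ then $G$ is a blowup of $E-8$, and \autoref{lem:emerald-v} gives the stronger bound $\chi(G)\le\lceil\frac{7}{6}\omega(G)\rceil\le\lceil\frac{11}{9}\omega(G)\rceil$. For $p\in\{1,2\}$, \autoref{lem:min size at most 2-7/6 bound} yields $\chi(G)\le\lceil\frac{7}{6}\omega(G)\rceil+1$. Writing $\omega=\omega(G)$ and using $\frac{11}{9}\omega-\frac{7}{6}\omega=\frac{\omega}{18}$, a comparison of ceilings shows $\lceil\frac{7}{6}\omega\rceil+1\le\lceil\frac{11}{9}\omega\rceil$ for every $\omega\ge 14$ (the inequality fails precisely at $\omega\in\{3,4,7,8,9,13\}$). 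So the lemma is settled once $\omega(G)\ge 14$, and it remains to treat $p\in\{1,2\}$ with $\omega(G)\le 13$.

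For the leftover values I would exploit the no-strong-stable-set hypothesis to pin down the structure. Rerunning the claims behind \autoref{lem:min size at most 2-7/6 bound} (\autoref{clm:middle triangle of a minimum vertex}, \autoref{clm:both end triangles}, \autoref{clm:one end triangle}), a blowup with a minimum vertex and no strong stable set is forced to be either the special emerald of \autoref{fig:special emerald}, or (after deleting $L_6\cup L_8$) the graph $G_x$ of \autoref{fig:Gx}, or else an equal-size blowup. In the special-emerald case the non-maximality constraints push $x\ge 4$, hence $\omega=2x+2\ge 10$, and since $\omega$ is even no member of the failing list $\{3,4,7,8,9,13\}$ survives there. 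Thus the genuine failures concentrate in the $G_x$ structure, where $\omega=2x+2$ is again even and the only small failing values are $\omega\in\{4,8\}$ (i.e. $x\in\{1,3\}$), together with the equal-size endpoints, namely $E$ itself ($p=1$, $\omega=3$, where one checks $\chi(E)=4=\lceil\frac{11}{9}\cdot 3\rceil$) while $E[K_2]$ already meets the bound with $\chi(E[K_2])=8=\lceil\frac{11}{9}\cdot 6\rceil$.

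The hard part is exactly this short but tight collection of boundary configurations, where the generic ``peel off a $C_7[K_3]$ and induct'' step is blocked because some parts have size below $3$. For each of them I would save the single color that the $\frac{7}{6}$-analysis wastes, either by an explicit optimal coloring of the small graph or by peeling a smaller equal-size blowup $C_7[K_1]$ or $C_7[K_2]$ and coloring the perfect remainder (as in \autoref{lem:blowup of C7+v}), using \autoref{lem:Gx} to control the $G_x$ cases. I expect the main obstacle to be verifying that this extra color can always be absorbed in these extremal small blowups, since these are precisely the graphs that make the $\lceil\frac{11}{9}\omega(G)\rceil$ bound tight.
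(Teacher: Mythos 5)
Your skeleton coincides with the paper's own proof: induct on $|G|$, strip strong stable sets, invoke \autoref{lem:min size at most 2-7/6 bound}, observe that $\lceil\frac{7}{6}\omega\rceil+1\le\lceil\frac{11}{9}\omega\rceil$ fails exactly for $\omega\in\{3,4,7,8,9,13\}$, and then use the no-strong-stable-set structure (the triangle claims) on this finite list. The gap is in how you dismiss the odd failing values. In the special-emerald and $G_x$ configurations the clique number is $\omega=2x+p$, not $2x+2$; your parity argument silently assumes $p=2$. For $p=1$ the non-maximality constraints only force $y-r,\,z-s\ge 2$, hence $x\ge 3$ and $\omega=2x+1\ge 7$, which is \emph{odd}: the failing values $7$, $9$, $13$ land exactly in these structures, so they are not vacuous. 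Concretely, the blowup with $|L_1|=|L_2|=|L_{10}|=|L_{11}|=3$, $|L_8|=1$ and all remaining parts of size $2$ (the graph of \autoref{fig:special emerald} with $p=1$, $x=3$, $y=z=2$, $r=s=0$) has $\omega=7$ and $p=1$, and it has no strong stable set: its maximum cliques are all triangles of $E$ except $\{1,7,8\}$ and $\{2,8,9\}$, each vertex of $E$ lies in only three consecutive triangles of the triangle graph $T(E)\cong C_{11}$, and a short window-covering count shows three vertices cannot meet all nine maximum triangles. Your induction therefore reaches this graph (and its analogues for $\omega\in\{9,13\}$) with no tool left to color it, since you have declared such configurations nonexistent.

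This is precisely where the paper does its real work. Its Case 3 treats $\omega\in\{7,8\}$ by a separate vertex-size argument (\autoref{clm:large vertex} and the $3$-vertex claim, which delete a few carefully chosen vertices and reduce to coloring perfect graphs), and its Case 4 treats $\omega\in\{9,13\}$ by re-running the triangle claims with $p=1$, arriving at the special emerald and at the graph of \autoref{fig:G'x} with odd clique number, for which explicit colorings are then exhibited. Your closing paragraph (``save the single color \dots\ by an explicit optimal coloring of the small graph'') gestures at this kind of work, but because your case analysis has already ruled these configurations out, the proposal never performs it; and it is not a routine check, since these boundary graphs are exactly what makes the lemma delicate at the exceptional values of $\omega$.
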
	

\begin{proof}
By \autoref{lem:min size at most 2-7/6 bound}, $\chi(G)\le \lceil \frac{7}{6} \omega(G) \rceil +1$.
Note that $\lceil \frac{7}{6} \omega(G) \rceil +1 \le \lceil \frac{11}{9} \omega(G) \rceil$ unless $\omega(G)\in \{3,4,7,8,9,13\}$.
Next we show directly that $\chi(G)\le \lceil \frac{11}{9} \omega(G) \rceil$ when $\omega(G)\in \{3,4,7,8,9,13\}$.
Moreover, $p(G)\ge 1$ by \autoref{lem:emerald-v}.

\noindent {\bf Case 1.}  $\omega(G)=3$. It follows that $G=E$ and $\chi(G)=4=\lceil \frac{11}{9} \omega(G) \rceil$.

\noindent {\bf Case 2.} $\omega(G)=4$.  Let $I$ be the set of vertices $i$ such that $|L_i|=2$. 
Since $1\le |L_i|\le 2$  for each $i\in V(E)$, $I$ is a stable set of $E$. 
For each $i\in I$, take a vertex $u_i\in L_i$ and let $S$ be the union of those vertices.
Note that $G-S=E$. It follows that $\chi(G)\le 5=\lceil \frac{11}{9} \omega(G) \rceil$.

\noindent {\bf Case 3.} $\omega(G)\in \{7,8\}$. Note that $\lceil \frac{11}{9} \omega(G) \rceil =\omega(G) +2$.
A vertex $i\in V(E)$ is said to be a $t$-vertex if $|L_i|=t$.
Since each $L_i$ is nonempty, $G$ has no $i$-vertex for $i\ge \omega(G)-1$.
We now prove the following.

\begin{claim}\label{clm:large vertex}
If $G$ has an $i$-vertex where $i\in \{\omega(G)-4,\omega(G)-3,\omega(G)-2\}$, then $\chi(G)\le \omega(G)+2$.
\end{claim}

\begin{proof}
By symmetry, assume that $|L_8|=i\in \{\omega(G)-4,\omega(G)-3,\omega(G)-2\}$. It follows that
$|L_1|+|L_7|\le 4$ and $|L_2|+|L_9|\le 4$. Suppose first that $|L_1|=1$.
Take a vertex $u_i\in L_i$ for each $i\in \{1,2,7,9\}$. Let $S=\{u_1,u_2,u_7,u_9\}$ and $G'=G-S$. Note that $d_{G'}(v)\le \omega(G)-1$ for each $v\in L_8$. 
Therefore, $G'$ is $\omega(G)$-colorable if and only if $G'-L_8$ is.
Since $G'-L_8$ is perfect, $\chi(G'-L_8)=\omega(G'-L_8)\le \omega(G)$. 
This implies that $\chi(G)\le \chi(G')+2\le \omega(G)+2$.

Now suppose that $|L_1|,|L_2|\ge 2$. So $i=\omega(G)-4$ and $|L_1|=|L_2|=2$.
If $|L_7|=|L_9|=1$, then for every $v\in L_8$, $d_G(v)=\omega(G)+1$. So
$G$ is $\omega(G)+2$-colorable if and only if $G-L_8$ is. By \autoref{lem:emerald-v},
$\chi(G-L_8)\le \lceil \frac{7}{6} \omega(G) \rceil = \omega(G)+2$ and thus $\chi(G)\le \omega(G)+2$.
Therefore, we may assume that $|L_7|\ge 2$. Let $L'_7\subseteq L_7$ be a set of size 2.
Let $G'=G-(L_2\cup L'_7)$. For every $v\in L_8$, $d_{G'}(v)\le \omega(G)-1$. Since $G'-L_8$
is perfect, $\chi(G')\le \omega(G)$ and thus $\chi(G')\le \omega(G)$. This implies that $\chi(G)\le \omega(G)+2$.
\end{proof}

By \autoref{clm:large vertex}, we may assume that $G$ has no $i$-vertex for $i\ge \omega(G)-4$.
If $\omega(G)=7$, then $|L_i|\le 2$ for every $i\in V(E)$ by \autoref{clm:large vertex}. This implies that $\omega(G)\le 6$,
a contradiction. So $\omega(G)=8$. Since the lemma holds for any graph $G'$ with $\omega(G')=7$,
We may assume that $G$ has no strong stable set.

\begin{claim}\label{equ:3-vertex}
If $G$ has a 3-vertex, $\chi(G)\le 10$.
\end{claim}

\begin{proof}
By symmetry, assume that $|L_8|=3$.  Since $\omega(G)=8$, $|L_1|+|L_7|\le 5$ and $|L_2|+|L_9|\le 5$.
Let $v\in L_8$. If $|N_G[v]|\le 10$, we are done. So assume that $|N_G[v]|\ge 11$. 
By symmetry,  $(|L_1|+|L_7|,|L_2|+|L_9|)\in \{(5,5),(5,4),(5,3),(4,4)\}$.
Suppose first that $(|L_1|+|L_7|,|L_2|+|L_9|)\neq (5,5)$.  
Note that $|N_G[v]|\le 12$ for any $v\in L_8$.  Similar to the proof of \autoref{clm:large vertex}, $|L_1|,|L_2|\ge 2$.
If $|L_1|=|L_2|=2$, then one of $|L_7|$ and $|L_9|$ is of size at least 2, say $|L_7|$. 
Then $G-(L_2\cup L_7)$ is 8-colorable and so $\chi(G)\le 10$. 
Now by symmetry $|L_1|=3$ and $|L_2|=2$.
If $|L_7|\ge 2$, then $G-(L_2\cup L_7)$ is 8-colorable and so $\chi(G)\le 10$. 
So $|L_7|=1$ and thus $|L_1|+|L_7|+|L_8|=7<\omega(G)$. 
Since $G$ has no strong stable set, $\{1,6,7\}$ must be maximum and so $|L_6|= 4$, a contradiction.

Now assume that $|L_1|+|L_7|= 5$ and $|L_2|+|L_9|= 5$. Since $G$ has no 4-vertex, no $L_i$ is of size 1 for $i\in \{1,2,7,9\}$.
By symmetry, $(|L_7|,|L_1|,|L_2|,|L_9|)\in \{(2,3,2,3),(3,2,2,3)\}$.
Suppose first that $(|L_7|,|L_1|,|L_2|,|L_9|)=(2,3,2,3)$. Since 1 is a 3-vertex, $|L_6|=3$.
Suppose that $|L_3|=2$. Since $|L_9|=3$, it follows that $|L_{10}|=3$. 
Since $\{2,3,9\}$ is not maximum, $\{4,5,11\}$ is maximum. 
It follows that $|L_4|=3$. 
The fact that $\omega(G)=8$ implies that $|L_{11}|=3$ and $|L_5|=2$.
Let $S=L_2\cup L_5\cup L_7$. It is easy to see that $\chi(G-S)=8$ (by modulo 8 coloring) and so $\chi(G)\le 10$.
Now suppose that $|L_3|=3$. Since $|L_2|=2$ and $|L_9|=3$, $|L_{10}|=2$ and $|L_4|=3$. 
Moreover, $(|L_{11}|,|L_5|)\in \{(2,3),(3,2)\}$. In either case, it is easy to see $\chi(G)\le 10$.
So far we have proved that for every 3-vertex $i$, the neighbors of $i$ must have sizes $(3,2,2,3)$.
It then implies that $|L_4|=2$ and $|L_4|=3$, a contradiction.
\end{proof}

Now $|L_i|\le 2$ for each $i\in V(E)$. This shows that $\omega(G)\le 6$, a contradiction.
This completes the proof for $\omega(G)=8$.

\noindent {\bf Case 4.} $\omega(G)\in \{9,13\}$. Since the lemma holds for graphs with clique number $\in \{8,12\}$, we may assume that
$G$ has no strong stable set. Similar to the proof of \autoref{lem:min size at most 2-7/6 bound}, the following two claims hold.

\begin{claim}\label{clm:triangles w=13}
If a triangle in the emerald is not maximum, then any triangle of distance 1 or 4 to that triangle is maximum.
\end{claim}

\begin{claim}\label{clm:middle triangle w=13}
The middle triangle of a minimum vertex must be maximum.
\end{claim}

Now we prove the statements analogous to \autoref{clm:both end triangles} and \autoref{clm:one end triangle}.
\begin{claim}\label{clm:end triangles w=13}
If both end triangle of a minimum vertex are not maximum, then $\chi(G)\le \lceil \frac{11}{9}\omega(G) \rceil$.
\end{claim}

\begin{proof}
Suppose that $|L_8|=p$. By \autoref{clm:middle triangle w=13}, $T=\{1,2,8\}$ is maximum. 
Suppose that $\{1,7,8\}$ and $\{2,8,9\}$ are not maximum.
By \autoref{clm:triangles w=13}, 
\begin{align*}
\text{ triangles }\{2,3,9\},\{1,6,7\},\{3,9,10\},\{6,7,11\},\{4,5,10\},\{4,5,11\} \text{ are maximum.} 
\end{align*}
Then $L_1,L_2,L_4,L_5$ have the same size, say $x$. Note that $\omega(G)=2x+p$. 
Since $\omega(G)$ is odd, $p=1$ and $x=\frac{\omega(G)-1}{2}$.
Let $|L_4|=y$ and $|L_5|=z$. Then $y+z=x+1$ and $G$ is the graph shown in \autoref{fig:special emerald}, 
where $r,s$ are nonnegative integers. It is easy to show that $\chi(G)\le 11$ if $\omega(G)=9$ and $\chi(G)\le 16$ if $\omega(G)=13$.
\end{proof}

\begin{claim}
If one end triangle of a minimum vertex is not maximum, then $\chi(G)\le \lceil \frac{11}{9}\omega(G) \rceil$.
\end{claim}

\begin{proof}
Suppose that 8 is a minimum vertex and $\{2,8,9\}$ is not maximum. 
By \autoref{clm:middle triangle w=13} and \autoref{clm:end triangles w=13}, $\{1,2,8\}$ and $\{1,7,8\}$ are maximum. 
It follows that $|L_6|=p$ and $\{1,6,7\}$ are maximum. By \autoref{clm:middle triangle w=13}, $\{6,7,11\}$ are maximum.
By \autoref{clm:triangles w=13}, $\{2,3,9\}$ and $\{4,5,10\}$ are maximum.
Since $|L_6|=p$, $|L_4|+|L_5|+|L_{11}|\ge |L_6|+|L_5|+|L_{11}|$. 
By \autoref{clm:triangles w=13}, $\{4,5,11\}$ is maximum.

If $\{3,9,10\}$ is maximum, then $|L_1|=|L_2|=|L_{10}|=|L_{11}|=x$, $\omega(G)=2x+p$, and so $p =1$.
Moreover, $|L_4|+|L_5|=x+1$ and $|L_3|+|L_9|=x+1$. Then $G-(L_6\cup L_8)$ is isomorphic to a blowup of the graph in \autoref{fig:G'x}
with $x\in \{4,6\}$. It is easy to verify that $\chi(G-(L_6\cup L_8))\le \lceil \frac{11}{9}\omega(G) \rceil-1$.
So $\chi(G)\le \lceil \frac{11}{9}\omega(G) \rceil$. 
Now $\{3,9,10\}$ is not maximum. By \autoref{clm:end triangles w=13}, $|L_9|>p$. 
By \autoref{clm:triangles w=13}, $\{3,4,10\}$ is maximum and thus $|L_4|\ge |L_9|> p=|L_6|$.
So $\{5,6,11\}$ is not maximum. Now $\{2,4,7\}$ is a strong stable set, a contradiction.
\end{proof}

Now every triangle containing a minimum vertex is maximum. This implies that $G=E[K_p]$ and so $\omega(G)\le 6$.
This is a contradiction. This completes the proof of Case 4 and hence the lemma.
\end{proof}

\begin{figure}[tb]
 \centering
 \begin{tikzpicture}[scale=0.8]
  \tikzstyle{vertex}=[draw, circle, fill=white!100, minimum width=4pt,inner sep=2pt]
  
  \node[vertex,label=above:$x$] (v1) at (0,3) {1};
  \node[vertex] (v2) at (4,1) {2};
  \node[vertex,label=right:$x$] (v3) at (4,-1) {3};
  \node[vertex,label=below:$x$] (v4) at (1.5,-3) {4};
  \node[vertex,label=below:$x$] (v5) at (-1.5,-3) {5};
  \node[vertex,label=left:$x$] (v6) at (-4,-1) {6};
  \node[vertex] (v7) at (-4,1) {7};
  \draw (v1)--(v2)--(v3)--(v4)--(v5)--(v6)--(v7)--(v1);
 
  \node[vertex] (s1) at (-1,-.5) {$t_7$};
  \node[vertex] (s2) at (1,-.5) {$t_2$};
  \draw (s1)--(v7) (s1)--(v6) (s1)--(v1) ;
  \draw (s2)--(v3) (s2)--(v2) (s2)--(v1) ;
  \draw (s1)--(s2);
  \node at (2,1) {$x+1$};
  \node at (-2,1) {$x+1$};
  \node at (0,-1.5) {$\le x+1$};
 \end{tikzpicture}
 \caption{A blowup of a 9-vertex graph, where $|L_i|=x$ for $i\in \{1,3,4,5,6\}$, $|L_7|+|L_{t_7}|=|L_2|+|L_{t_2}|=x+1$ and $|L_{t_7}|+|L_{t_2}|\le x+1$.}
 \label{fig:G'x}
\end{figure}
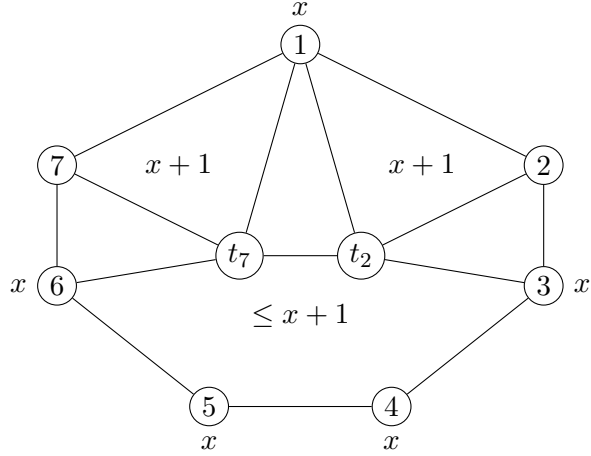

\begin{lemma}[Equal-size blowup of the emerald]\label{equal blowup of emerald}
$\chi(E[K_3])=11$.
\end{lemma}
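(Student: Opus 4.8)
The plan is to prove the two inequalities $\chi(E[K_3])\ge 11$ and $\chi(E[K_3])\le 11$ separately. For orientation, recall that each neighborhood $N_E(i)$ induces a $P_4$ (stated in the text), which is triangle-free, so $E$ has no $K_4$ and $\omega(E)=3$; hence $\omega(E[K_3])=9$ and the target value $11=\lceil\frac{11}{9}\cdot 9\rceil$ is exactly the $\chi$-bound we are aiming for. For the lower bound I would argue through the independence number. In any equal-size blowup, a stable set meets each clique $L_i$ in at most one vertex, and the indices of the cliques it meets form a stable set of $E$; hence $\alpha(E[K_3])=\alpha(E)$. Since $|V(E[K_3])|=33$, once I establish $\alpha(E)=3$ I immediately get $\chi(E[K_3])\ge |V(E[K_3])|/\alpha(E[K_3])=33/3=11$.

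So the first key step is to show $\alpha(E)=3$. A stable set of size $3$ clearly exists (for instance $\{8,10,11\}$). To rule out a stable set of size $4$, I would use that $E$ is vertex-transitive, so it suffices to check that no stable set of size $4$ contains one fixed vertex, say vertex $1$. The six non-neighbours of $1$ span only a few stable pairs and no stable triple, which is a short finite verification; by transitivity this rules out stable $4$-sets altogether.

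For the upper bound I would exploit the cyclic symmetry that is already visible in the text. Since the triangle graph $T(E)$ equals $C_{11}$ and each vertex of $E$ lies in exactly three triangles, one checks that these three triangles are consecutive on the cycle $T(E)$; sending each vertex to the middle position of its triangle-interval identifies $E$ with the circulant $C_{11}(\{\pm1,\pm2\})$ (the square of $C_{11}$). After this relabeling the coloring becomes transparent: writing the vertices of $E[K_3]$ as $(i,c)$ with $i\in\mathbb{Z}_{11}$ and $c\in\{0,1,2\}$, I would define, for each $j\in\mathbb{Z}_{11}$, the color class $C_j=\{(j,0),(j+3,1),(j+7,2)\}$. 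The indices $j,j+3,j+7$ are pairwise at cyclic distance at least $3$ and are therefore pairwise nonadjacent in $C_{11}(\{\pm1,\pm2\})$, so each $C_j$ is stable; moreover each vertex $(i,c)$ lies in exactly one class (forced by its label $c$, namely $(i,0)\in C_i$, $(i,1)\in C_{i-3}$, $(i,2)\in C_{i-7}$). Thus the eleven sets $C_0,\dots,C_{10}$ form a proper $11$-coloring, giving $\chi(E[K_3])\le 11$, and together with the lower bound $\chi(E[K_3])=11$.

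The hard part will be the upper bound, and specifically recognizing the circulant structure that makes a single uniform cyclic color pattern available. Once that structure is in hand, the essential bookkeeping is that each clique $L_i$ is covered exactly three times (once per label $c$), which is precisely what forces the three-fold blowup to partition into exactly $11$ stable triples rather than more; verifying this counting and the pairwise distance condition is routine but is where the argument must be made airtight.
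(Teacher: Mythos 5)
Your proposal is correct, and in outline it matches the paper: the lower bound is the identical counting $\chi(E[K_3])\ge |V(E[K_3])|/\alpha(E[K_3])=33/3=11$, and the upper bound is in both cases a certificate consisting of eleven stable triples of emerald vertices covering each vertex exactly three times. Where you genuinely diverge is in how that certificate is produced. The paper simply writes down an explicit list of eleven triples and calls the verification routine; you instead identify the emerald with the circulant $C_{11}(\{\pm 1,\pm 2\})$ (via the observation that the three triangles through each vertex are consecutive on the triangle cycle $T(E)=C_{11}$, with each vertex sent to its middle triangle) and then generate all eleven classes by the single pattern $C_j=\{(j,0),(j+3,1),(j+7,2)\}$, so that stability reduces to the one modular check that $3,4,7\not\equiv\pm 1,\pm 2\pmod{11}$. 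I verified the isomorphism: ordering the emerald vertices by middle triangle as $8,1,7,6,11,5,4,10,3,9,2$ does carry $E$ onto $C_{11}(\{\pm 1,\pm 2\})$, and your partition argument (each $(i,c)$ lies in exactly one class) is sound. Your route buys real advantages: it proves the vertex-transitivity of $E$, which the paper asserts without proof (and uses elsewhere), it turns $\alpha(E)=3$ from a bare assertion into a short finite verification, and it replaces the $\binom{3}{2}\cdot 11$ adjacency checks of the paper's list by one arithmetic observation. The cost is the up-front finite check that the triangle structure really yields the circulant; be sure that check (or the explicit vertex ordering above) appears in the written proof rather than remaining a claim that ``one checks,'' since it is the only non-routine step in your argument.
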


\begin{proof}
Let $G=E[K_3]$. Since $|G|=33$ and $\alpha(G)=3$, $\chi(G)\ge 11$. Next we show that $\chi(G)\le 11$.
Observe that an 11-coloring of $G$ is equivalent to a set of 11 stable sets of size 3 such that 
each vertex of the emerald (see \autoref{fig:emerald}) is contained in exactly three stable sets. 
It is routine to verify that the following is a desired set of 11 stable sets:
\begin{align*}
&\{1,3,5\}, \{2,5,7\}, \{4,9,11\}, \{1,3,11\}, \\
& \{3,6,8\},\{6,8,10\}, \{2,4,6\}, \{4,7,9\},  \\
& \{1,5,9\}, \{2,7,10\},\{8,10,11\}.
\end{align*}
This completes the proof.
\end{proof}

Now we show \autoref{thm:emerald} for blowups $G$ of the emerald with $p(G)\ge 3$.

\begin{lemma}\label{lem:min size at least 3}
If $G$ is a blowup of the emerald with $p(G)\ge 3$, then $\chi(G) \le \lceil \frac{11}{9}\omega(G) \rceil$.
\end{lemma}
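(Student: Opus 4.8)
The plan is to reduce the general case $p(G)\ge 3$ to the equal-size blowup $E[K_3]$, whose chromatic number is pinned down exactly in \autoref{equal blowup of emerald}, by peeling off an equal-size blowup of $E$ and controlling the remainder with induction. I would prove the statement by induction on $|G|$. The base is essentially $p(G)=3$ in the tightest configuration, where $E[K_3]$ gives $\chi=11=\lceil\frac{11}{9}\cdot 9\rceil$. For the inductive step, I would first clear away the easy exits: if $G$ has a strong stable set $S$, then $\omega(G-S)\le \omega(G)-1$ and the inductive hypothesis on $G-S$ (which still has $p(G-S)\ge 2$, handled by \autoref{lem:min size at most 2} if it drops to $2$) finishes via $\chi(G)\le\chi(G-S)+1$. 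So I may assume $G$ has no strong stable set, which as in the proofs of \autoref{lem:min size at most 2-7/6 bound} and \autoref{clm:end triangles w=13} forces every maximal triangle of $E$ to be \emph{maximum}, i.e.\ $|L_i|+|L_j|+|L_k|=\omega(G)$ for each triangle $\{i,j,k\}$ of the emerald.

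Once every triangle is maximum, the eleven triangle-equations rigidly constrain the sizes $|L_i|$. Because $E$ is vertex-transitive and $4$-regular with its triangle graph equal to $C_{11}$, the system of equations "every triangle has the same sum $\omega(G)$" should force a very symmetric size pattern — concretely I expect it to force all $|L_i|$ to agree in the equal-size case or to split into a small number of determined values as in \autoref{fig:special emerald}. The key reduction step is then: choose $t=\min_i|L_i|=p(G)\ge 3$, pick $L_i'\subseteq L_i$ of size $3$ for every vertex $i$ (or of size $t$), and set $H=G\big[\bigcup_i L_i'\big]\cong E[K_3]$ (resp.\ $E[K_t]$). By \autoref{equal blowup of emerald}, $\chi(H)=11$, while $\omega(G-H)\le\omega(G)-3\cdot\frac{\omega(G)}{ (\text{triangle sum})}$; more usefully, removing a full $E[K_3]$ drops the clique number of every maximal triangle by $9$, so $\omega(G-H)\le\omega(G)-9$, and then by induction and the additivity $\chi(G)\le\chi(G-H)+\chi(H)\le\lceil\frac{11}{9}(\omega(G)-9)\rceil+11=\lceil\frac{11}{9}\omega(G)\rceil$, which is exactly the bound and is where the constant $\frac{11}{9}$ is calibrated so that peeling one $E[K_3]$ "pays for itself."

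The main obstacle I anticipate is twofold. First, the arithmetic of the peeling has to be set up so that $G-H$ remains a blowup of the emerald to which the inductive hypothesis legitimately applies: I need $p(G-H)\ge 3$ to stay in the current lemma, and when $p(G-H)$ drops below $3$ I must hand off to \autoref{lem:min size at most 2}, checking that the resulting bound $\lceil\frac{11}{9}\omega(G-H)\rceil$ still composes correctly with $\chi(H)=11$. Second, and more delicate, is the case analysis forced by the non-strong-stable-set assumption when \emph{not} every triangle is maximum but no strong stable set exists — exactly the situation studied through \autoref{clm:triangles of distance 1 or 4} and \autoref{clm:one end triangle}. I would mirror those arguments: use \autoref{clm:triangles w=13}-style propagation along $T(E)=C_{11}$ to show that a single non-maximum triangle together with the absence of strong stable sets pins the sizes down to a special emerald (\autoref{fig:special emerald}) or to the graph $G_x$ of \autoref{lem:Gx}, and then color those explicit configurations directly, borrowing \autoref{lem:Gx} and \autoref{lem:equal blowup of C7} for the constituent pieces. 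Verifying that the residual special configurations genuinely satisfy $\chi\le\lceil\frac{11}{9}\omega\rceil$ — rather than merely the weaker $\lceil\frac{7}{6}\omega\rceil+1$ — is the technical heart, since this is precisely where the emerald beats the $7$-bracelet bound and where the optimal constant $\frac{11}{9}$ must be squeezed out.
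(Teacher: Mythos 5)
Your key reduction step --- peeling off $H=E[K_3]$ (three vertices from each $L_u$), using $\chi(H)=11$ from \autoref{equal blowup of emerald}, noting $\omega(G-H)\le\omega(G)-9$, and handing off to \autoref{lem:min size at most 2} whenever $p(G-H)\le 2$ --- is exactly the paper's proof, and it is already complete as stated. The additional machinery you anticipate needing (strong stable sets, forcing every triangle of $E$ to be maximum, and the special configurations of \autoref{fig:special emerald} and \autoref{lem:Gx}) is unnecessary: the peeling argument works unconditionally for every blowup with $p(G)\ge 3$, and all of the delicate case analysis lives inside \autoref{lem:min size at most 2}, which you correctly treat as a black box, so what you call the ``technical heart'' simply never arises in this lemma.
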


\begin{proof}
We prove by induction on $|G|$. The base case is $G=E[K_3]$ and the lemma holds.
So we assume that the lemma holds for every blowup of the emerald $G'$ with $p(G')\ge 3$ and $|G'|<|G|$.
For each $u\in V(E)$, let $L'_u\subseteq L_u$ be a set of size 3 and $H=G[\bigcup_{u\in V(E)}L'_u]$.
Then $H=E[K_3]$ and $G':=G-H$ is a blowup of the emerald. 
If $p(G')\le 2$, then $\chi(G')\le \lceil \frac{11}{9}\omega(G') \rceil$ by \autoref{lem:min size at most 2}.
If $p(G')\ge 3$, then $\chi(G')\le \lceil \frac{11}{9}\omega(G') \rceil$ by the inductive hypothesis.
Since $\omega(G')\le \omega(G)-9$, it follows from \autoref{equal blowup of emerald} that
\begin{align*}
	\chi(G) & \le \chi(G') + \chi(H) \\
			   & \le \lceil \frac{11}{9}(\omega(G)-9) \rceil + 11\\
			   & =\lceil \frac{11}{9}\omega(G) \rceil. 
\end{align*}
This completes the proof.
\end{proof}

Finally, we are ready to prove \autoref{thm:emerald}.

\begin{proof}[Proof of \autoref{thm:emerald}]
It follows directly from \autoref{lem:min size at most 2} and \autoref{lem:min size at least 3}
\end{proof}


\end{document}